\documentclass[11pt,a4paper,twoside]{article}
\usepackage{bm, amsmath, amssymb, amsthm} 

\topmargin=-23 true mm
\oddsidemargin=-0 true mm
\evensidemargin=-0 true mm
\setlength{\textheight}{252 true mm}  
\setlength{\textwidth}{162 true mm}

\def\dt{\partial_t}
\def\T{T^\sharp}

\def\tT{{\tilde T}^\sharp}

\def\tA{{\tilde A}}

\def\th{{\tilde h}}

\newcommand\mD{{\cal D}}
\newcommand\mN{{\cal N}}

\def\<{\langle}
\def\>{\rangle}
\def\RR{\mathbb{R}}

\def\eps{\varepsilon}

\newcommand\mA{A^{\rm mix}}
\newcommand\Ts{T^{\sharp}}
\newcommand\mTs{T^{\sharp {\rm mix}}}
\newcommand\mh{h^{\rm mix}}
\newcommand\mT{T^{\rm mix}}

\newcommand\tr{\operatorname{Tr}}
\newcommand\Div{\operatorname{div}}
\newcommand\id{\operatorname{id}}

\def\vol{\operatorname{vol}}

\def\eq{\hspace*{-1.5mm}&=&\hspace*{-1.2mm}}


\newtheorem{corollary}{Corollary}
\newtheorem{definition}{Definition}
\newtheorem{example}{Example}
\newtheorem{remark}{Remark}
\newtheorem{lemma}{Lemma}
\newtheorem{proposition}{Proposition}

\author
      {
      	Vladimir Rovenski\footnote{Department of Mathematics, University of Haifa, Mount Carmel, Haifa, 3498838, Israel.
      		\newline e-mail: {\tt vrovenski@univ.haifa.ac.il}}\ \
      	\ and \
      	Tomasz Zawadzki
      	\footnote{Uniwersytet \L\'{o}dzki, ul. Banacha 22, 90-238 \L\'{o}d\'{z}, Poland.
      		\newline e-mail: {\tt tomasz.zawadzki@wmii.uni.lodz.pl}}
      }

\title{Variations of the mutual curvature of two orthogonal non-complementary distributions}

\begin{document}

\date{}

\maketitle

\begin{abstract}
On a smooth manifold with distributions ${\cal D}_1$ and ${\cal D}_2$ having trivial intersection, we consider the integral of their mutual curvature, as a functional of Riemannian metrics that make the distributions orthogonal.
The~mutual curvature is defined as the sum of sectional curvatures of planes spanned by all pairs of vectors from an orthonormal basis,
such that one vector of the pair belongs to ${\cal D}_1$ and the second vector belongs to ${\cal D}_2$. As such, it interpolates between the sectional curvature of a plane field (if both distributions are one-dimensional), and the mixed scalar curvature of a Riemannian almost product structure (if both distributions together span the tangent bundle). We derive Euler-Lagrange equations for the functional, formulated in terms of extrinsic geometry of distributions, i.e., their second fundamental forms and integrability tensors. We give examples of critical metrics for distributions defined on domains of Riemannian submersions, twisted products and $f$-$K$-contact manifolds.

\vskip1.5mm\noindent
\textbf{Keywords}:
Riemannian manifold,
distribution,
foliation,
mutual curvature,
variation,
twis\-ted product,
$f$-$K$-contact manifold

\vskip1.5mm
\noindent
\textbf{Mathematics Subject Classifications (2010)} 53C15; 53C12; 53C40
\end{abstract}


\section*{Introduction}

Distributions on a smooth manifold (i.e., subbundles of the tangent bundle) appear in various topics of mathematics and theoretical physics,
e.g., \cite{bf,blair}, and are used to build up notions of integrability, and specifically of a foliation (i.e., a partition of a manifold by submanifolds of a constant dimension).
In \cite{rz-1,rz-2} we were guided by the following approach to variational problems in the geometry of distributions
(developed in \cite{RWa-1} for codimension one foliations):
\textit{given a distribution $\mD$ on a smooth manifold and a property $Q$ of
the second fundamental form or the curvature associated with a distribution,
find critical Riemannian metrics for the integral $Q$ in the class of general
or {adapted} {\rm (}i.e., with fixed orthogonal complement to $\mD)$ variations of~metrics.}
Many canonical metrics are critical points of some variational problems, e.g., Einstein metrics are critical points of the integral of the scalar curvature, and their classification 
is a deep problem~\cite{besse}.

In the presence of an additional structure (e.g., a contact structure~\cite{blair},
an almost product structure~\cite{g1967}, or a foliation~\cite{bf}),
one can consider an analogue of Einstein-Hilbert action consistent with this structure
and, instead of the scalar curvature, 
use another suitable curvature invariant.
This approach was implemented in our previous papers, e.g., \cite{rz-1,rz-2},
where the integrated scalar curvature of a smooth manifold $M$ with a Riemannian metric $g$ was replaced by the integrated \textit{mixed scalar curvature} ${S}_{\,\mD,\mD^\bot}$
of an almost product structure $(\mD,\mD^\bot)$.
The mixed scalar curvature ${S}_{\,\mD,\mD^\bot}$ is an averaged sum of sectional curvatures of the planes that non-trivially intersect
with the distribution $\mD$ and its orthogonal complement $\mD^\bot$, see \cite{wa1}.

A natural generalization of the above approach is to consider curvatures defined by a pair of orthogonal, but not necessarily complementary distributions $\mD_1$ and $\mD_2$. Then, their sum $\mD$ is a distribution on $M$ with orthogonal decomposition:
\begin{equation}\label{Eq-mD-2}
 \mD=\mD_1\oplus\mD_2 ,
\end{equation}
which can be regarded 
as an almost product structure
associated with a foliated manifold (if $\mD$ is everywhere integrable) or with a non-holonomic manifold (if $\mD$ is nowhere integrable), which is the central object of sub-Riemannian geometry, e.g.,~\cite{bf}.
In \cite{r6,VRAGAG}, some integral formulae were proved for a scalar curvature-type invariant of such $(M, \mD)$.

In this article, we consider a manifold of dimension $\dim M >2$, with two distributions $\mD_1$ and $\mD_2$ having trivial intersection, and investigate critical points of their integrated mutual curvature, among Riemannian metrics that make $\mD_1$ and $\mD_2$ orthogonal. For local orthonormal bases $\{ E_a \}$
 and $\{ {\cal E}_i \}$
 of orthogonal distributions $\mD_1$ and $\mD_2$, respectively,
the \textit{mutual curvature} is given~by
\begin{equation} \label{mc}
{S}_{\,\mD_1,\mD_2} = \sum\nolimits_{\,a} \sum\nolimits_{\,i} g(R(E_a,{\cal E}_i){\cal E}_i,E_a),
\end{equation}
where $R$
is the curvature tensor of the Levi-Civita connection $\nabla$ on $(M,g)$.
For $\dim \mD_1 = \dim \mD_2 =1$, the mutual curvature ${S}_{\,\mD_1,\mD_2}$ reduces to the sectional curvature of the plane field~$\mD$.
On the other hand, if 
$\mD = TM$, then ${S}_{\,\mD_1,\mD_2}$ reduces to the mixed scalar curvature of the almost-product structure $(\mD_1,\mD_2)$.
The relation of ${S}_{\,\mD_1,\mD_2}$ to sectional curvature means that many curvatures 
can be expressed by linear combinations of 
formulas \eqref{mc} for suitably chosen distributions.
On the other hand, with an appropriate choice of some auxiliary distributions, ${S}_{\,\mD_1,\mD_2}$ can be expressed in terms of their mixed scalar curvatures, which will allow us to develop variational formulas using results from~\cite{rz-2}.

We~introduce and study the following action:
\begin{equation}\label{Eq-Smix}
 J_{\mD_1,\mD_2}: g\mapsto\int_{M} {S}_{\,\mD_1,\mD_2}\,{\rm d}\vol_g,
\end{equation}
where
metrics $g$ satisfy
$g(X,Y)=0$ for all $X \in \mD_1,\ Y \in \mD_2$.
If $M$ is non-compact, we integrate in \eqref{Eq-Smix}
over an arbitrary relatively compact domain $\Omega\subset M$, which contains supports of variations of~$g$.
We derive the Euler-Lagrange equations for \eqref{Eq-Smix},
for variations of metric keeping orthogonality of the two given
distributions $\mD_1$ and $\mD_2$. These equations
are formulated using notions of the extrinsic geometry, i.e., the second fundamental forms and  integrability tensors of
$\mD_1$, $\mD_2$, $\mD$ and their orthogonal complements, and correspond to variations of metric along those distributions.

We present examples of critical points of \eqref{Eq-Smix}
for the following cases: one-dimensional distributions, twisted products, Riemannian submersions and $f$-$K$-contact manifolds.
The~first case gives a new perspective on sectional curvature from the point of view of extrinsic geometry of distributions.
In the second case, we examine a simple but non-trivial property of distributions to be mixed integrable and mixed totally geodesic, arising on twisted products of manifolds; and use examples obtained in \cite{rz-2} to find critical points of action \eqref{Eq-Smix}.
The remaining cases deal with
non-integrable and not mixed integrable distributions
on domains of Riemannian submersions,
and manifolds with $f$-$K$-structures, e.g., \cite{blairfKcontact,Goertsches-2}, and
$K$-contact structures~\cite{blair}.

The article consists of an Introduction and three sections.
Section~\ref{sec:prel} contains necessary definitions,
the properties of the mutual curvature of distributions
$\mD_1$ and $\mD_2$ and new integral formula.
In~Section~\ref{sec:var}, we derive Euler-Lagrange equations of the action \eqref{Eq-Smix} on the set of
metrics keeping $\mD_1$ and $\mD_2$ orthogonal, the three subsections there  correspond to particular, geometrically meaningful, variations of metric. In~Section~\ref{sec:examples} (consisting of five subsections), we give examples of critical metrics of the action \eqref{Eq-Smix} on manifolds with certain structures, naturally defining pairs of distributions.

\section{Preliminaries}
\label{sec:prel}

Let $\mD_1, \mD_2$ be two orthogonal distributions on a Riemannian manifold $(M,g)$,
and let $\mD_3=\mD^\bot$ be the distribution orthogonal to $\mD=\mD_1\oplus\mD_2$, see \eqref{Eq-mD-2}, i.e., $\mD_1 , \mD_2 , \mD_3$ are three pairwise orthogonal distributions on $(M,g)$, together decomposing the tangent bundle. For $\mD_i$, let $\mD_i^\perp$ denote its orthogonal complement, $i=1,2,3$. Let $P_m:TM\to\mD_m\ (m=1,2,3)$ be the orthoprojector onto $\mD_m$
and $P_m^\perp=\id_{\,TM}-P_m$ be the orthoprojector onto $\mD_m^\perp$, hence, $P_1^\perp = P_2 + P_3$, etc.
We~will use orthonormal vectors from a local frame adapted to decomposition $TM = \mD_1 \oplus \mD_2 \oplus \mD_3$, our convention is that
\[
 E_a , E_b \in \mD_1,\quad
 {\cal E}_i, {\cal E}_j \in \mD_2,\quad
 e_\mu , e_\nu \in \mD_3.
\]
We do not write ranges of indices $a,i,\mu$, assuming that in all sums they go through entire local bases of $\mD_1, \mD_2$ or $\mD_3$, respectively. All quantities defined below using an adapted frame do not depend on the choice of this~frame.
On rare occasions we will use a full local orthonormal frame in $TM$, the vectors of which will be denoted by~$\xi_k$.

The set of vector fields on $M$ will be denoted by ${\mathfrak X}_M$, the sets of vector fields with values in particular distribution, e.g., $\mD_1$, will be denoted analogously, i.e., by ${\mathfrak X}_{\mD_1}$. Distribution $\mD_m$ at a point $x \in M$ will be denoted by $\mD_{m_x}$. In all statements formulated for a pair of vectors $X,Y \in TM$, we assume that those vectors belong to the same tangent space.

A plane in $TM$ spanned by vectors $X\in{\mD}_1$ and $Y\in{\mD}_2$ is called~\textit{mi\-xed}. 
Thus, the mutual curvature \eqref{mc} is the sum of sectional curvatures of all mixed planes defined by an orthonormal frame.
Since $S_{\,\mD_i, \mD_j} = S_{\,\mD_j, \mD_i}$ and $\mD_i^\perp=\bigoplus_{j\ne i}\mD_j$, we have
 $S_{\,\mD_1 , \mD_1^\perp} = S_{\,\mD_1 , \mD_2} + S_{\,\mD_1 , \mD_3}$
 and
 $S_{\,\mD_2 , \mD_2^\perp} = S_{\,\mD_1 , \mD_2} + S_{\,\mD_2 , \mD_3}$.
Hence, using pairs of complementary orthogonal distributions, we represent the mutual curvature as
\begin{eqnarray}\label{SD1D2}
 2\,S_{\,\mD_1 , \mD_2}
\eq S_{\,\mD_1 , \mD_1^\perp} + S_{\,\mD_2 , \mD_2^\perp} - S_{\,\mD_3 , \mD_3^\perp}.
\end{eqnarray}

For all $X,Y \in TM$ and $m=1,2,3$, we set the following:
\begin{eqnarray*}
{\tilde h}_m (X,Y) = \frac{1}{2} P_m ( \nabla_{ P^\perp_m X } P^\perp_m Y {+}\nabla_{ P^\perp_m Y } P^\perp_m X ),\quad
 h_m (X,Y) = \frac{1}{2} P_m^\perp ( \nabla_{ P_m X } P_m Y {+}\nabla_{ P_m Y } P_m X ),\\
{\tilde T}_m (X,Y) = \frac{1}{2} P_m ( \nabla_{ P^\perp_m X } P^\perp_m Y {-} \nabla_{ P^\perp_m Y } P^\perp_m X ),\quad
 T_m (X,Y) = \frac{1}{2} P_m^\perp ( \nabla_{ P_m X } P_m Y {-} \nabla_{ P_m Y } P_m X ) ,
\end{eqnarray*}
for the second fundamental tensors and the integrability tensors of the distributions.

The ``musical" isomorphisms $\sharp$ and $\flat$ will be used for rank one and symmetric rank 2 tensors.
For~example, if $\omega$ is a 1-form and $X,Y\in {\mathfrak X}_M$ then
$\omega(Y)=g( \omega^\sharp,Y )$ and $X^\flat(Y) = g( X,Y )$. In particular, for $Z_1, Z_2 \in T_xM$ we have for all $X,Y \in T_xM$: $ Z_1^\flat \otimes Z_2^\flat (X,Y) = g(Z_1, X) g(Z_2 ,Y)$ and ${\rm Sym}(Z_1^\flat \otimes Z_2^\flat) (X,Y) = \frac{1}{2} ( g(Z_1, X) g(Z_2 ,Y) +  g(Z_1, Y) g(Z_2 ,X) )$.
The inner product of tensors defined by the metric $g$ will be denoted by $\< \cdot, \cdot \>$, we have, e.g., for $(0,2)$-tensors $Q_1,Q_2$ and $(1,2)$-tensors $S_1, S_2$:
\begin{eqnarray*}
\< Q_1 , Q_2 \> = \sum\nolimits_{\,k,m} Q_1 (\xi_k , \xi_m )  Q_2 (\xi_k , \xi_m ), \quad
\< S_1 , S_2 \> = \sum\nolimits_{\,k,m} g( S_1 (\xi_k , \xi_m ) ,  S_2 (\xi_k , \xi_m )) ,
\end{eqnarray*}
where $\xi_k$ is an orthonormal frame of $TM$.
For a $(1,2)$-tensor $Q$ and a vector field $Z$ we define a $(0,2)$-tensor $\< Q, Z \>$ by the formula
\[
\< Q, Z \> (X,Y) = g(Q(X,Y),Z),\quad X,Y \in TM.
\]
The shape operator $A_{i,Z}$ and the operator $T_{i,Z}^\sharp$ of ${\mD}_i$, with $Z\in{\mD}_i^\bot$, are defined~by
\[
g( A_{i,Z} X,Y )= g( h_i(X,Y),Z ),\quad
g( T_{i,Z}^\sharp X,Y )= g( T_i(X,Y),Z ), \quad X,Y \in {\mD}_i .
\]
Let $H_m$ be the mean curvature of $\mD_m\ (m=1,2,3)$, i.e., $H_m = \tr_g h_m$; and let ${\tilde H}_m$ be the mean curvature of $\mD_m^\perp$, i.e., ${\tilde H}_m = \tr_g {\tilde h}_m$. We have $H_m \in \mathfrak{X}_{ \mD_m^\perp}$ and ${\tilde H}_m \in \mathfrak{X}_{ \mD_m}$, in particular,
\begin{eqnarray*}
 H_1 = \sum\nolimits_{\,a} h_1 (E_a , E_a),\quad
{\tilde H}_1 = \sum\nolimits_{\,i} {\tilde h}_1 ({\cal E}_i , {\cal E}_i) + \sum\nolimits_{\,\mu} {\tilde h}_1 (e_\mu , e_\mu) .
\end{eqnarray*}
These tensors for various $\mD_i$ are related, e.g., it follows that ${\tilde H}_1 = P_1 H_2 + P_1 H_3$ 
and
for all $X,Y \in TM$ we have
\begin{equation*}
 h_1(X,Y)
 = {\tilde h}_2 (P_1 X , P_1 Y) + {\tilde h}_3 (P_1 X , P_1 Y).
\end{equation*}
A distribution ${\mD}_m$ is called integrable if $T_m=0$,
and ${\mD}_m$ is called {totally umbilical}, {harmonic}, or {totally geodesic}, if ${h}_m=({H}_m/n_m)\,g,\ {H}_m =0$, or ${h}_m=0$, respectively, where $n_m=\dim\mD_m$.

 The {divergence} of a vector~field $Y$ on $M$ is defined by
 $\Div Y =\sum\nolimits_{\,k}g(\nabla_{\xi_k} Y, \,\xi_k)$,
and the divergence of any (1,s)-tensor field $S$ is defined by
 $(\Div S)(X_1, \ldots , X_s) = \!\sum\nolimits_{\,k} g( (\nabla_{\xi_k} S) (X_1, \ldots, X_s), \xi_k )$, for all $X_1, \ldots, X_s \in TM$,
where $\{\xi_k \}$ is a local orthonormal frame of $TM$.

We note that \eqref{SD1D2} can give rise to an integral formula, as for each $\mD_m$ on a Riemannian manifold $(M,g)$
we have the following formula, see \cite{wa1}:
\begin{equation}\label{E-PW}
 S_{\,\mD_m , \mD_m^\perp} = \Div ( H_m + {\tilde H}_m)
 + \| H_m \|^2 + \| {\tilde H}_m \|^2 + \| T_m \|^2 + \| {\tilde T}_m \|^2 - \| h_m \|^2 - \| {\tilde h}_m \|^2 .
\end{equation}

\begin{proposition}
Let $\mD_1, \mD_2, \mD_3$ be pairwise orthogonal distributions on $(M,g)$ such that $TM = \mD_1 \oplus \mD_2 \oplus \mD_3$.
Set $\| {\tilde h}_1|_{\mD_2 \times \mD_3}\|^2 =\sum\nolimits_{\,i, \mu}\| {\tilde h}_1 ({\cal E}_i , e_\mu ) \|^2$, etc.
Then
\begin{eqnarray}\label{E-new2}
 && S_{\,\mD_1 , \mD_2} = \Div( P_2 H_1 + P_1 H_2 ) + \| P_2 H_1 \|^2 + \| P_1 H_2 \|^2 \nonumber \\
&& +\,\| {\tilde h}_3 |_{\mD_1 \times \mD_2 } \|^2
-\| {\tilde h}_1 |_{\mD_2 \times \mD_3 } \|^2 -\| {\tilde h}_2 |_{\mD_1 \times \mD_3 } \|^2 -\| P_1 h_2 \|^2 - \| P_2 h_1 \|^2 \nonumber \\
&& -\,\| {\tilde T}_3 |_{\mD_1 \times \mD_2 } \|^2
 +\| {\tilde T}_1 |_{\mD_2 \times \mD_3 } \|^2 + \| {\tilde T}_2 |_{\mD_1 \times \mD_3 } \|^2  +\| P_2 T_1 \|^2 + \|P_1 T_2 \|^2 .
\end{eqnarray}
If $M$ is a closed manifold, then the following integral formula holds:
\begin{eqnarray}\label{E-new-IF}
\nonumber
&& \int_M \big( S_{\,\mD_1 , \mD_2} -  \| P_2 H_1 \|^2 - \| P_1 H_2 \|^2
- \| {\tilde h}_3 |_{\mD_1 \times \mD_2 } \|^2 + \| {\tilde T}_3 |_{\mD_1 \times \mD_2 } \|^2 \\
\nonumber
&&
+\,\| {\tilde h}_1 |_{\mD_2 \times \mD_3 } \|^2 + \| {\tilde h}_2 |_{\mD_1 \times \mD_3 } \|^2
- \| {\tilde T}_1 |_{\mD_2 \times \mD_3 } \|^2 - \| {\tilde T}_2 |_{\mD_1 \times \mD_3 } \|^2 \\
&& -\,\| P_2 h_1 \|^2 -\|P_1 h_2 \|^2 - \| P_2 T_1 \|^2 - \|P_1 T_2 \|^2
\big) \, {\rm d} \vol_g =0 .
\end{eqnarray}
\end{proposition}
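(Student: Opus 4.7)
My plan is to start from identity \eqref{SD1D2}, which reads $2 S_{\mD_1, \mD_2} = S_{\mD_1, \mD_1^\perp} + S_{\mD_2, \mD_2^\perp} - S_{\mD_3, \mD_3^\perp}$, and substitute the Walczak-type formula \eqref{E-PW} for each of the three mixed scalar curvatures on the right. This reduces the proof of \eqref{E-new2} to an algebraic bookkeeping exercise: re-express every divergence and every squared norm appearing in \eqref{E-PW} for $m=1,2,3$ in terms of the finer three-way decomposition $TM = \mD_1 \oplus \mD_2 \oplus \mD_3$, then combine with signs $+,+,-$ and collect the survivors.

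The decomposition rests on identities that follow directly from the definitions together with $P_m^\perp = \sum_{j \neq m} P_j$. For $X, Y \in \mD_j$ with $j \neq m$ one has $\tilde h_m(X,Y) = P_m h_j(X,Y)$ and $\tilde T_m(X,Y) = P_m T_j(X,Y)$, because $P_m^\perp$ acts as the identity on $\mD_j$ and the $\mD_j$-valued part of $\nabla_X Y$ is killed by $P_m$. Tracing gives $\tilde H_m = \sum_{j \neq m} P_m H_j$, as recorded in the preliminaries. Splitting the codomain of $h_m$ yields $\|h_m\|^2 = \sum_{j \neq m} \|P_j h_m\|^2$, with the analogue for $T_m$. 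Splitting the domain of $\tilde h_m$ into $\mD_j \times \mD_k$ blocks produces $\|\tilde h_m\|^2 = \sum_{j \neq m} \|P_m h_j\|^2 + 2\,\|\tilde h_m|_{\mD_j \times \mD_k}\|^2$ where $\{j,k\}=\{1,2,3\}\setminus\{m\}$ with $j<k$ (the factor $2$ absorbing the symmetric off-diagonal blocks), with the antisymmetric analogue for $\|\tilde T_m\|^2$.

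Substituting these into the $+,+,-$ signed sum, most contributions cancel pairwise. In the divergence each $P_\ell H_k$ with $\ell=3$ or $k=3$ appears twice with opposite signs, leaving $2(P_2 H_1 + P_1 H_2)$. For the $h, \tilde h$ blocks, the diagonal summands $\|P_\ell h_k\|^2$ indexed by pairs containing $3$ cancel across the three Walczak expansions, leaving only $-2\|P_2 h_1\|^2 - 2\|P_1 h_2\|^2$, while each off-diagonal term $\|\tilde h_m|_{\mD_j \times \mD_k}\|^2$ survives exactly once with the sign inherited from \eqref{E-PW} and \eqref{SD1D2}: positive for $m=3$, negative for $m=1,2$. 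The $T, \tilde T$ block is handled identically but with the opposite overall sign, yielding $+2\|P_2 T_1\|^2 + 2\|P_1 T_2\|^2$ together with the three corresponding off-diagonal $\tilde T$-survivors. Dividing by two gives \eqref{E-new2}, and integrating \eqref{E-new2} over closed $M$ and applying Stokes' theorem to the divergence then yields \eqref{E-new-IF}.

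The step I expect to be most delicate is the $\|\tilde H_m\|^2 = \bigl\|\sum_{j\neq m} P_m H_j\bigr\|^2$ expansion: apart from the diagonal pieces $\|P_m H_j\|^2$ which combine cleanly with the analogous $\|H_m\|^2 = \sum_{j\neq m} \|P_j H_m\|^2$ summands, it also produces cross terms $\langle P_m H_j, P_m H_k\rangle$ with $j \neq k$ that must be tracked through the signed sum in order to confirm that the net $H$-block output reduces to the two-term form $\|P_2 H_1\|^2 + \|P_1 H_2\|^2$ displayed in \eqref{E-new2}. This is where I expect the main difficulty and the highest risk of sign errors; the remainder of the argument is a routine but tedious reorganization of indices.
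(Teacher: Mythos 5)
Your strategy coincides with the paper's own proof: substitute \eqref{E-PW} into \eqref{SD1D2} and regroup every term with respect to $TM=\mD_1\oplus\mD_2\oplus\mD_3$. Your block identities for the divergence and for the $h$-, $\tilde h$-, $T$-, $\tilde T$-norms are all correct and reproduce the paper's computation. The genuine gap is the one step you explicitly defer: tracking the cross terms produced by $\|\tilde H_m\|^2=\|\sum_{j\neq m}P_mH_j\|^2$ through the signed sum. You assert that this tracking "confirms" the two-term form $\|P_2H_1\|^2+\|P_1H_2\|^2$, but you never carry it out, and carried out it does not close. Indeed, using $\tilde H_1=P_1H_2+P_1H_3$, $\tilde H_2=P_2H_1+P_2H_3$, $\tilde H_3=P_3H_1+P_3H_2$, the signed sum of the six $H$-norms equals
\[
 2\,\|P_2H_1\|^2+2\,\|P_1H_2\|^2+2\,g(P_1H_2,P_1H_3)+2\,g(P_2H_1,P_2H_3)-2\,g(P_3H_1,P_3H_2),
\]
and since $H_m\in\mathfrak{X}_{\mD_m^\perp}$ the three cross terms are exactly $g(H_2,H_3)$, $g(H_1,H_3)$ and $g(H_1,H_2)$; there is no algebraic reason for $g(H_2,H_3)+g(H_1,H_3)-g(H_1,H_2)$ to vanish, and none of the other blocks (whose decompositions, as you correctly note, produce no cross terms) can absorb it.

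That this remainder is genuinely nonzero can be seen on the doubly warped metric $g=dx^2+\psi(x)^2dy^2+\phi(x)^2dz^2$ with $\mD_1=\langle\partial_x\rangle$, $\mD_2=\langle\partial_y\rangle$, $\mD_3=\langle\partial_z\rangle$: there $H_1=0$, $H_2=-(\psi'/\psi)\partial_x$, $H_3=-(\phi'/\phi)\partial_x$, all $T$-tensors and all off-diagonal $\tilde h$-blocks vanish, $S_{\,\mD_1,\mD_2}=K(\partial_x,\partial_y)=-\psi''/\psi$, while the right-hand side of \eqref{E-new2} evaluates to $-\psi''/\psi-\psi'\phi'/(\psi\phi)$; the discrepancy is precisely $g(H_2,H_3)$. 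So the honest output of your (and the paper's) computation is \eqref{E-new2} with the additional summand $g(H_1,H_3)+g(H_2,H_3)-g(H_1,H_2)$ on the right, and correspondingly extra terms under the integral in \eqref{E-new-IF}. Your instinct to flag the $H$-block as the highest-risk step was exactly right, but as written the proposal neither verifies the claimed cancellation nor detects that it fails, so it does not establish the proposition as stated.
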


\begin{proof} Using \eqref{E-PW}, we can rewrite \eqref{SD1D2} as
\begin{eqnarray}\label{E-new1}
 2\,S_{\,\mD_1, \mD_2} \eq \| H_1 \|^2 +\| {\tilde H}_1 \|^2 +\| T_1 \|^2 +\| {\tilde T}_1 \|^2 -\| h_1 \|^2 -\| {\tilde h}_1 \|^2
 +\Div( H_1 + {\tilde H}_1 ) \nonumber \\
&& +\,\| H_2 \|^2 + \| {\tilde H}_2 \|^2 + \| T_2 \|^2 + \| {\tilde T}_2 \|^2 - \| h_2 \|^2 - \| {\tilde h}_2 \|^2
 +\Div  ( H_2 + {\tilde H}_2 ) \nonumber \\
&& -\,\| H_3 \|^2 - \| {\tilde H}_3 \|^2 - \| T_3 \|^2 - \| {\tilde T}_3 \|^2 + \| h_3 \|^2 + \| {\tilde h}_3 \|^2
 -\Div  ( H_3 + {\tilde H}_3 ).
\end{eqnarray}
Using ${\tilde H}_3 = P_3 ( H_1 + H_2 )$, ${\tilde H}_1 = P_1 (H_2 + H_3)$ and
${\tilde H}_2 = P_2 (H_1 + H_3)$, we get for terms with~``$H$":
\begin{eqnarray*}
&& \| H_1 \|^2 + \| {\tilde H}_1 \|^2 + \| H_2 \|^2 + \| {\tilde H}_2 \|^2  - \| H_3 \|^2 - \| {\tilde H}_3 \|^2
= 2\,\| P_2 H_1 \|^2 + 2\, \| P_1 H_2 \|^2 , \\
&& \Div ( H_1 + {\tilde H}_1 )  + \Div ( H_2 + {\tilde H}_2 ) -  \Div ( H_3 + {\tilde H}_3 )
= 2\Div ( P_2 H_1 + P_1 H_2 ).
\end{eqnarray*}
For the terms in \eqref{E-new1} with ``$h$" and ``$T$" we have
\begin{eqnarray*}
\| {\tilde h}_3 \|^2 - \| h_1 \|^2 - \| h_2 \|^2 &=& \!\!\!2\sum\nolimits_{a,i} \| {\tilde h}_3 (E_a, {\cal E}_i ) \|^2
- \sum\nolimits_{a,b} \| P_2 h_1 (E_a, E_b ) \|^2 - \sum\nolimits_{i,j} \| P_1 h_2 ({\cal E}_i, {\cal E}_j ) \|^2 \\
&=& 2\| {\tilde h}_3 |_{\mD_1 \times \mD_2 } \|^2 - \| P_2 h_1 \|^2 - \| P_1 h_2 \|^2, \\
\| {\tilde T}_3 \|^2 - \| T_1 \|^2 - \| T_2 \|^2 &=& \!\!\!2\sum\nolimits_{a,i} \| {\tilde T}_3 (E_a, {\cal E}_i ) \|^2
- \sum\nolimits_{a,b} \| P_2 T_1 (E_a, E_b ) \|^2 - \sum\nolimits_{i,j} \| P_1 T_2 ({\cal E}_i, {\cal E}_j ) \|^2 \\
&=& 2 \| {\tilde T}_3 |_{\mD_1 \times \mD_2 } \|^2 - \| P_2 T_1 \|^2 - \| P_1 T_2 \|^2,\\
\| h_3 \|^2 - \| {\tilde h}_1 \|^2 - \| {\tilde h}_2 \|^2 &=&
- 2\sum\nolimits_{i , \mu} \| {\tilde h}_1 (e_\mu , {\cal E}_i) \|^2 - 2\sum\nolimits_{a , \mu} \| {\tilde h}_2 (e_\mu , E_a ) \|^2 \\
&&- \sum\nolimits_{i , j} \| {\tilde h}_1 ({\cal E}_i , {\cal E}_j) \|^2
- \sum\nolimits_{a , b} \| {\tilde h}_2 (E_a , E_b ) \|^2 \\
&=& - 2 \| {\tilde h}_1 |_{\mD_2 \times \mD_3 } \|^2 - 2 \| {\tilde h}_2 |_{\mD_1 \times \mD_3 } \|^2
- \,\| P_1 h_2 \|^2 - \,\| P_2 h_1 \|^2,\\
\| T_3 \|^2 - \| {\tilde T}_1 \|^2 - \| {\tilde T}_2 \|^2 &=& - 2 \| {\tilde T}_1 |_{\mD_2 \times \mD_3 } \|^2
- 2 \| {\tilde T}_2 |_{\mD_1 \times \mD_3 }\|^2 - \| P_1 T_2 \|^2 - \| P_2 T_1 \|^2 .
\end{eqnarray*}
Hence, \eqref{E-new1} reduces to \eqref{E-new2}.
Applying Stokes' theorem to \eqref{E-new2}, we get the integral formula \eqref{E-new-IF}.
\end{proof}

\begin{definition} \rm
Let $TM = \mD_1 \oplus \mD_2 \oplus \mD_3$. We define for all $X,Y,Z \in \mathfrak{X}_M$:
\begin{eqnarray*}
&& \mh_{12}(X,Y) = {\tilde h}_3 (P_1 X , P_2 Y) + {\tilde h}_3 (P_2 X , P_1 Y),\quad
g( \mA_{12,Z} X, Y )= g(\mh_{12}(X,Y) ,Z) ,\\
&& \mT_{12}(X,Y) = {\tilde T}_3 (P_1 X , P_2 Y) + {\tilde T}_3 (P_2 X , P_1 Y),\quad
g( \mTs_{12,Z} X, Y ) = g(\mT_{12}(X,Y) ,Z)\,.
\end{eqnarray*}
\end{definition}

Tensors $\mh_{ij}, \mT_{ij}, \mA_{ij}, \mTs_{ij}$ for other pairs of  distributions $\mD_i,\mD_j$, where $i,j \in \{1,2,3\}$, are defined analogously.
We note that the following equalities are true:
\[
 2\,\| {\tilde h}_3 |_{\mD_1 \times \mD_2} \|^2 = \| \mh_{12} \|^2,\quad
 2\,\| {\tilde T}_3 |_{\mD_1 \times \mD_2 } \|^2 = \| \mT_{12} \|^2,\quad
 {\rm etc}.
\]

\begin{definition}\rm
A pair $(\mD_1,\mD_2)$ of distributions on $(M,g)$ is called

a) \textit{mixed totally geodesic}, if $\mh_{12}=0$,
i.e., $h_{\,12}(X,Y)=0$ for all $X\in \mathfrak{X}_{\mD_1}$ and $Y\in \mathfrak{X}_{\mD_2}$.

b) \textit{mixed integrable}, if $\mT_{12}=0$,
i.e., $T_{\,12}(X,Y)=0$ for all $X\in \mathfrak{X}_{\mD_1}$ and $Y\in \mathfrak{X}_{\mD_2}$.
\end{definition}

\begin{proposition}
Let a closed Riemannian manifold $(M,g)$ admit two orthogonal unit vector fields $\xi_1$ and $\xi_2$.
Put $\mD_i=span(\xi_i)\ (i=1,2)$ and $\mD_3=\mD_1^\bot\cap\mD_2^\bot$.

1. If pairs $(\mD_1,\mD_3)$ and $(\mD_2,\mD_3)$ of distributions are mixed totally geodesic, and the pair $(\mD_1,\mD_2)$ is mixed integrable, then
either the sectional curvature $K(\xi_1(x),\xi_2(x))>0$ at some point $x\in M$, or
$K(\xi_1,\xi_2)=0$ and $P_2 H_1=P_1 H_2={\tilde h}_3 |_{\mD_1 \times \mD_2 }={\tilde T}_1 |_{\mD_2 \times \mD_3 }={\tilde T}_2 |_{\mD_1 \times \mD_3 }=0$ on $M$.

2. If $\xi_1$ and $\xi_2$ are geodesic vector fields,
the pair $(\mD_1,\mD_2)$ of distributions is mixed totally geodesic, and the pairs  $(\mD_1,\mD_3)$ and  $(\mD_2,\mD_3)$ are mixed integrable, then
either $K(\xi_1(x),\xi_2(x))<0$ at some point $x\in M$, or
$K(\xi_1,\xi_2)=0$ and ${\tilde h}_1 |_{\mD_2 \times \mD_3 }={\tilde h}_2 |_{\mD_1 \times \mD_3 }={\tilde T}_3 |_{\mD_1 \times \mD_2 }=0$ on $M$.
\end{proposition}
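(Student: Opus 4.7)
The plan is to apply the integral formula \eqref{E-new-IF} with $\mD_1=\mathrm{span}(\xi_1)$ and $\mD_2=\mathrm{span}(\xi_2)$, letting the hypotheses zero out enough summands that the integrand has a definite sign. Three simplifications are automatic from $\dim\mD_1=\dim\mD_2=1$: the mutual curvature \eqref{mc} becomes the single sectional curvature $K(\xi_1,\xi_2)$; the integrability tensors $T_1,T_2$ vanish (they are skew in a one-dimensional space), removing the summands $\|P_2 T_1\|^2$ and $\|P_1 T_2\|^2$; and $\|P_2 h_1\|^2=\|P_2 H_1\|^2$, $\|P_1 h_2\|^2=\|P_1 H_2\|^2$, since $H_i=h_i(\xi_i,\xi_i)$.

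For part~1, the hypotheses translate via Definition~2 into $\tilde h_2|_{\mD_1\times\mD_3}=0$, $\tilde h_1|_{\mD_2\times\mD_3}=0$, and $\tilde T_3|_{\mD_1\times\mD_2}=0$; these are exactly the three summands in \eqref{E-new-IF} (other than $S_{\,\mD_1,\mD_2}$) that enter with a positive sign. After their removal the integral formula reduces to
\[
\int_M K(\xi_1,\xi_2)\,{\rm d}\vol_g \;=\; \int_M\!\Big(2\|P_2 H_1\|^2 + 2\|P_1 H_2\|^2 + \|\tilde h_3|_{\mD_1\times\mD_2}\|^2 + \|\tilde T_1|_{\mD_2\times\mD_3}\|^2 + \|\tilde T_2|_{\mD_1\times\mD_3}\|^2\Big){\rm d}\vol_g \;\geq\; 0.
\]
If $K(\xi_1,\xi_2)\leq 0$ pointwise, both sides must vanish, forcing $K\equiv 0$ and each of the listed tensors to vanish on $M$.

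For part~2, the geodesic hypothesis $\nabla_{\xi_i}\xi_i=0$ gives $H_i=h_i=0$ for $i=1,2$, removing all summands in \eqref{E-new-IF} built from $H_1,H_2,h_1,h_2$; the remaining mixed hypotheses kill $\tilde h_3|_{\mD_1\times\mD_2}$, $\tilde T_2|_{\mD_1\times\mD_3}$, and $\tilde T_1|_{\mD_2\times\mD_3}$. This time it is precisely the negative-signed terms that vanish, so \eqref{E-new-IF} becomes
\[
\int_M K(\xi_1,\xi_2)\,{\rm d}\vol_g \;=\; -\int_M\!\Big(\|\tilde T_3|_{\mD_1\times\mD_2}\|^2 + \|\tilde h_1|_{\mD_2\times\mD_3}\|^2 + \|\tilde h_2|_{\mD_1\times\mD_3}\|^2\Big){\rm d}\vol_g \;\leq\; 0,
\]
and the symmetric dichotomy, now assuming $K\geq 0$ pointwise, yields part~2.

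The main obstacle is purely bookkeeping: correctly associating each mixed-geometric hypothesis with the summand it annihilates via the index conventions of $\mh_{ij}$ and $\mT_{ij}$, and consistently applying the one-dimensional identifications $T_i=0$ and $H_i=h_i(\xi_i,\xi_i)$. Once this is in order, both parts follow at once from closedness of $M$ together with the non-negativity of sums of squared norms.
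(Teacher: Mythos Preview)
Your proposal is correct and follows essentially the same approach as the paper: apply the integral formula \eqref{E-new-IF}, use the one-dimensionality of $\mD_1,\mD_2$ together with the mixed-geometric hypotheses to kill the appropriately signed summands, and read off the dichotomy from the nonnegativity of the remaining sum of squares. Your bookkeeping of which hypothesis annihilates which term via Definition~2 is accurate, and the resulting reduced integrals coincide with those in the paper's proof.
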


\begin{proof}
 Note that $h_i=H_i\,g_{\,|\mD_i}$ and $T_i=0$ for $i=1,2$.
By \eqref{E-new-IF}, we get
\begin{equation*}
\int_M \big( K(\xi_1, \xi_2) -  2\| P_2 H_1 \|^2 - 2\| P_1 H_2 \|^2
- \| {\tilde h}_3 |_{\mD_1 \times \mD_2 } \|^2 - \| {\tilde T}_1 |_{\mD_2 \times \mD_3 } \|^2 - \| {\tilde T}_2 |_{\mD_1 \times \mD_3 } \|^2
\big) \, {\rm d} \vol_g =0 .
\end{equation*}
Thus, $\int_M K(\xi_1 , \xi_2)\, {\rm d} \vol_g \ge0$ and the first claim follows.

For the second claim, from the assumptions we have $h_i=H_i=T_i=0$ for $i=1,2$. Also ${\tilde h}_3 |_{\mD_1 \times \mD_2 }={\tilde T}_3 |_{\mD_1 \times \mD_2 }=0$,
where $\mD_3=\mD_1^\bot\cap\mD_2^\bot$.
By \eqref{E-new-IF}, we get
\begin{eqnarray*}
\int_M \big( K(\xi_1 , \xi_2)
+ \| {\tilde h}_1 |_{\mD_2 \times \mD_3 } \|^2 + \| {\tilde h}_2 |_{\mD_1 \times \mD_3 } \|^2
+ \| {\tilde T}_3 |_{\mD_1 \times \mD_2 } \|^2 \big) \, {\rm d} \vol_g =0 .
\end{eqnarray*}
Thus, $\int_M  K(\xi_1 , \xi_2)\, {\rm d} \vol_g \le0$ and the second claim follows.
\end{proof}

We note that in a similar way one can obtain integral formulas with other curvatures built from $S_{\,\mD_i, \mD_j}$, for various pairs of orthogonal distributions $\mD_i, \mD_j$ on a manifold.

\section{Variations}
\label{sec:var}

In this section
we obtain Euler-Lagrange equations for the action \eqref{Eq-Smix} considered
on the set of all Riemannian metrics $g$ on $M$ for which
$\mD_1$ and $\mD_2$ are orthogonal.
We consider smooth $1$-parameter variations $\{g_t\in{\rm Riem}(M):\,|t|<\eps\}$ of the metric $g_0 = g$.
We assume that the infinitesimal variations, represented by symmetric $(0,2)$-tensors
\[
 {B}_t = \partial g_t/\partial t
\]
are supported in a relatively compact domain $\Omega$ in $M$, i.e., $g_t=g$ and ${B}_t=0$ outside $\Omega$ for all $|t|<\eps$.
Then, the Euler-Lagrange equations that we obtain are valid at every point of $\Omega$, which can be chosen arbitrarily. If $M$ is compact, we assume that $\Omega = M$.
A variation $g_t$ is said to be \emph{volume-preserving} if ${\rm Vol}(\Omega,g_t) = {\rm Vol}(\Omega,g)$ for all $t$, where ${\rm Vol}(\Omega,g) = \int_\Omega {\rm d} \vol_g$ for the volume form ${\rm d} \vol_g$ of $g$.

While keeping orthogonality of $\mD_1$ and $\mD_2$ for all variations $g_t$ is necessary for the development of variational formulas in this paper, we also consider less general, \emph{adapted} variations, defined as~follows.
\begin{definition}
Let $\mD_3$ be the $g$-orthogonal complement of $\mD_1 \oplus \mD_2$, we call a variation 
$\{g_t\in{\rm Riem}(M):\,|t|<\eps\}$ \emph{adapted} if all three distributions $\mD_1, \mD_2 , \mD_3$ are pairwise orthogonal with respect to $g_t$ for all $|t|<\eps$.
\end{definition}
Hence, adapted variations preserve the orthogonal decomposition of $TM$ defined by $\mD_1,\mD_2$ and $\mD_3$.
We~adopt the notations $\partial_t \equiv \partial/\partial t,\ {B}\equiv{\dt g_t}_{\,|\,t=0}$.
Since $B$ is symmetric, we have $\<C,\,B\>=\<{\rm Sym}(C),\,B\>$ for any  $(0,2)$-tensor $C$.

\subsection{Auxiliary tensors and variation formula for $S_{\,\mD_1, \mD_2}$}

Tensors defined in this section are used to obtain the Euler-Lagrange equations, but do not appear in their final form.
Definitions below will be usually given in general, for a distribution $\mD_m$,  
but, especially where orthonormal frames and relations between different distributions are considered, sometimes we give a definition for $\mD_1$ only, as definitions for 
other distributions are analogous. In~general, this section follows the notation from \cite{rz-2} adapted to multiple distributions, so tensors are usually described with an additional index, indicating to which distribution they correspond.

%
Recall that for the shape operator, we have
 $g(A_{m,Z} X , Y) = g( h_m(X,Y) , Z)$ for $m=1,2,3$,
which is non-zero only for $X,Y \in \mD_m$ and $Z \in \mD_m^\perp$. Similarly,
$g({\tilde A}_{m,Z} X , Y) = g( {\tilde h}_m(X,Y) , Z)$, 
which is non-zero only for $X,Y \in \mD_m^\perp$ and $Z \in \mD_m$. We will use notation ${A}_{1, a}={A}_{1, E_a}$, $A_{2, i} = A_{2,{\cal E}_i}$, $A_{3,\mu} = A_{3,e_\mu}$ and analogously for other tensors.

For all $X,Y \in TM$ and a given $Z \in \mathfrak{X}_M$, let
\begin{eqnarray*}
 \theta_m(X,Y) \eq \frac{1}{2}\,( T^\sharp_{m, P_m^\perp X} (P_m Y) + T^\sharp_{m, P_m^\perp Y} (P_m X) ),\\
 {\tilde \theta}_m(X,Y) \eq \frac{1}{2}\,( {\tilde T}^\sharp_{m, P_m X} (P_m^\perp Y) + {\tilde T}^\sharp_{m, P_m Y} (P_m^\perp X) ),\\
 \alpha_m (X,Y) \eq \frac{1}{2}\,( A_{m, P_m^\perp X} (P_m Y) + A_{m, P_m^\perp Y} (P_m X) ),\\
 {\tilde \alpha}_m (X,Y) \eq \frac{1}{2}\,( {\tilde A}_{m, P_m X} (P_m^\perp Y) + {\tilde A}_{m, P_m Y} (P_m^\perp X) ),\\
 {\tilde \delta}_{m,Z} (X,Y) \eq \frac{1}{2}\,( g( \nabla_{P_m X} Z , (P_m^\perp Y)) +  g( \nabla_{P_m Y} Z , (P_m^\perp X)) ).
\end{eqnarray*}

For any $(1,2)$-tensors $Q_1,Q_2$ we define the $(0,2)$-tensor $\Upsilon_{Q_1,Q_2}$~by
\[
\Upsilon_{Q_1,Q_2} (X,Y) = \sum\nolimits_{\,k,m} ( g(Q_1(\xi_k, \xi_m) , X) g(Q_2(\xi_k , \xi_m) , Y) + g(Q_2(\xi_k, \xi_m) , X) g(Q_1(\xi_k , \xi_m) , Y))
\]
for all $X,Y \in TM$, where $\{ \xi_k \}$ is a local orthonormal frame of $TM$.
We have, e.g.,
\begin{eqnarray*}
&& \Upsilon_{ {\tilde \alpha}_1 , {\alpha}_1  }(X,Y) \\ 
&& =\frac{1}{2}\sum\nolimits_{\,a,i} \big( g(P_1^\perp X , {\tilde A}_{1 ,  a} {\cal E}_i )\, g( P_1 Y,  {A}_{1 , i }  E_a )
 +
 g(P_1^\perp Y , {\tilde A}_{1 ,  a} {\cal E}_i )\, g( P_1 X,  {A}_{1 , i }  E_a ) \big) \\
&&
 +\,\frac{1}{2}\sum\nolimits_{\,a,\mu} \big( g(P_1^\perp X ,{\tilde A}_{ 1 ,  a} e_\mu )\, g( P_1 Y,  A_{1 ,  \mu} E_a  )
 +
 g(P_1^\perp Y ,{\tilde A}_{ 1 , a} e_\mu )\, g( P_1 X,  A_{1 , \mu} E_a  ) \big).
\end{eqnarray*}
for all $X,Y \in TM$.

For $(1,1)$-tensors $S_1, S_2$ let $[S_1, S_2] = S_1 S_2 - S_2 S_1$.
We have the following tensors, which may have non-zero values only for $X,Y \in \mD_1$:
\begin{eqnarray*}
&& {\cal K}_1^\flat (X,Y) = \sum\nolimits_{\,i} g(\,[ {T}^\sharp_{1,i}, \,{A}_{1,i}] X , \,Y )
 + \sum\nolimits_{\,\mu} g(\,[ {T}^\sharp_{1,\mu}, \,{A}_{1,\mu}] X , \,Y ),
\\
&& {\cal T}_1^\flat (X,Y) = \sum\nolimits_{i} g( ( T_{1,i}^\sharp T_{1,i}^\sharp X,Y) + \sum\nolimits_{\mu} g( T_{1,\mu}^\sharp T_{1,\mu}^\sharp X,Y), \\
&& \frac{1}{2}\Upsilon_{{\tilde h}_1,{\tilde h}_1} (X,Y) = \sum\nolimits_{\,i,j} g( {\tilde h}_1( {\cal E}_i, {\cal E}_j) ,X) g( {\tilde h}_1( {\cal E}_i, {\cal E}_j) ,Y)  \\
&& +\,2\sum\nolimits_{\,i, \mu} g( {\tilde h}_1({\cal E}_i , e_\mu ) ,X)
g( {\tilde h}_1({\cal E}_i , e_\mu ) ,Y)
 +\sum\nolimits_{\,\mu,\nu} g( {\tilde h}_1( e_\mu , e_\nu ) ,X)
g( {\tilde h}_1( e_\mu , e_\nu ) ,Y), \\
&& \frac{1}{2}\Upsilon_{{\tilde T}_1,{\tilde T}_1} (X,Y) = \sum\nolimits_{\,i,j} g( {\tilde T}_1( {\cal E}_i, {\cal E}_j) ,X) g( {\tilde T}_1( {\cal E}_i, {\cal E}_j) ,Y)  \\
 && +\,2\sum\nolimits_{\,i, \mu} g( {\tilde T}_1({\cal E}_i , e_\mu ) ,X)
g( {\tilde T}_1({\cal E}_i , e_\mu ) ,Y)
 +\sum\nolimits_{\,\mu,\nu} g( {\tilde T}_1( e_\mu , e_\nu ) ,X)
g( {\tilde T}_1( e_\mu , e_\nu ) ,Y),
\end{eqnarray*}
and the following tensors, which may have non-zero values only for $X,Y \in \mD_1^\perp$:
\begin{eqnarray*}
&& \widetilde{\cal K}_1^\flat (X,Y) = \sum\nolimits_{\,a} g(\,[{\tilde T}^\sharp_{1,a},\, {\tilde A}_{1,a}] X , \,Y) ,\\
&& \tilde{\cal T}_1^\flat (X,Y) = \sum\nolimits_{\,a} g( {\tilde T}_{1,a}^\sharp {\tilde T}_{1,a}^\sharp X ,Y), \\
&& \frac{1}{2}\Upsilon_{h_1,h_1} (X,Y) = \sum\nolimits_{\,a,b} g( h_1(E_a, E_b) , X) g( h_1(E_a, E_b),Y) , \\
&& \frac{1}{2}\Upsilon_{T_1,T_1} (X,Y) = \sum\nolimits_{\,a,b} g( T_1(E_a, E_b) , X) g( T_1(E_a, E_b),Y).
\end{eqnarray*}
For the metric $g$, let
 $ g_{m}^\top = g |_{\mD_m \times \mD_m}$
 and
  $g_{m}^\perp = g |_{\mD_m^\perp \times \mD_m^\perp}$ for $m=1,2,3$.

The sum of two orthogonal distributions $\mD_i$ and $\mD_j$ will be denoted by $\mD_{ij}$, and the orthogonal projection onto it will be denoted by $P_{ij}$. 
The tensors defined above considered for $\mD_{ij}$ will be denoted by index $ij$, e.g., for $\mD_{12} = \mD_1 \oplus \mD_2$ we have
\begin{eqnarray*}
&& h_{12}(X,Y) = \frac{1}{2}\,P_3(\nabla_{ P_{12} X }(P_{12} Y) + \nabla_{ P_{12} Y }(P_{12} X) ) , \quad X,Y \in \mD_{12} , \\
 && \widetilde{\cal K}_{12}^\flat = \big( \sum\nolimits_{\,a} [\tT_{12,a},\, \tA_{12,a}]^\flat
 + \sum\nolimits_{\,i} [\tT_{12,i}, \,\tA_{12,i}]^\flat \big)_{\,|\,\mD_3 \times \mD_3} ,\\
 && H_{12} = P_3 H_1 + P_3 H_2,\quad {\tilde H}_{12} = H_3,\quad
 {\cal K}_{12}^\flat = \sum\nolimits_{\,\mu} [\T_{12,\mu}, \,A_{12,\mu}]^\flat,
\end{eqnarray*}
and 
\[
 -2\Upsilon_{{\tilde\alpha}_{12}, \theta_{12}} (X,Y) = -g(X , \tA_{12,a} e_\mu)\,g(Y, \T_{12,\mu} E_a)
 - g(X, \tA_{12,i} e_\mu )\,g(Y, \T_{12,\mu} {\cal E}_i ) ,
\]
which may have non-zero values only for $X \in \mD_3$, $Y \in \mD_3^\perp$.

\begin{proposition} \label{propdtS1S2general}
Let variation $g_t$ of $g$ satisfy $g_t(X,Y) =0$ for all $X \in \mathfrak{X}_{\mD_1}, Y \in \mathfrak{X}_{ \mD_2}$, and all $t$. Then
\begin{equation}\label{dtS1S2general}
 2\,{\rm\frac{d}{dt}}\, J_{\mD_1,\mD_2}(g_t) |_{\,t=0}
 =  I_1 + I_2 +I_3 +I_4 +I_5 +I_6,
\end{equation}
where
\begin{eqnarray} \nonumber
\label{I1}
&& I_1 = \int_{\Omega}\big\<4\Upsilon_{{\tilde\alpha}_1, \theta_1} -\Div{\tilde h_1} |_{\mD_1^\perp \times \mD_1^\perp}
 -\widetilde{\cal K}_1^\flat -H_1^\flat \otimes H_1^\flat + \frac{1}{2} \Upsilon_{h_1,h_1} +2\,\tilde{\cal T}_1^\flat \nonumber \\
&&\quad  +\,\frac{1}{2} \Upsilon_{T_1,T_1} +4\<\theta_1,\,H_{23}\> +2 ( \Div(\alpha_1 -\tilde\theta_1) )_{ \,| {(\mD_1 \times \mD_1^\perp )
\cup (\mD_1^\perp \times \mD_1) } }
 {+}2\Upsilon_{\alpha_1, {\tilde\alpha}_1 + {\tilde \theta}_1} {+}2\<{\tilde\theta}_1 - {\tilde\alpha}_1, H_1 \>\nonumber \\
&&\quad +\,2\,{\rm Sym}(H_1^{\flat} \otimes H_{23}^{\flat})
 -2\,{\tilde\delta}_{1,H_1} + 2\Upsilon_{{\tilde \theta}_1, \theta_1 - \alpha_1} \nonumber \\
&&\quad  +\,\frac{1}{2}\,\big( S_{\,\mD_1 , \mD_1^\perp} + \Div( H_{23} - H_1)\big)\, g_{1}^{\perp},\ B\big\>\,{\rm d}\vol_g , 
 \\
 \label{I2}
&&I_2 = \int_{\Omega} \< -\Div{h_1} |_{\mD_1\times\mD_1} - {\cal K}_1^\flat - H_{23}^\flat \otimes H_{23}^\flat
 +\frac{1}{2}\Upsilon_{ {\tilde h}_1,{\tilde h}_1} + \frac{1}{2} \Upsilon_{ {\tilde T}_1, {\tilde T}_1}
 +2\,{\cal T}_1^\flat \nonumber \\
&&\quad +\,\frac{1}{2}\,\big( S_{\,\mD_1 , \mD_1^\perp} + \Div(H_1 -  H_{23})\big)\, g_{1}^\top,\ B\big\>\,{\rm d}\vol_g , 
\\
\label{I3}
&& I_3 = \int_{\Omega}
\big\<4\Upsilon_{{\tilde\alpha}_2, \theta_2} -\Div{\tilde h_2} |_{\mD_2^\perp \times \mD_2^\perp} -\widetilde{\cal K}_2^\flat
- H_2^\flat \otimes H_2^\flat + \frac{1}{2} \Upsilon_{h_2,h_2} + \frac{1}{2} \Upsilon_{T_2,T_2} +2\,\tilde{\cal T}_2^\flat \nonumber \\
&&\quad +\,4\,\<\theta_2,\, H_{13} \>  +\,2 ( \Div(\alpha_2 -\tilde\theta_2) )_{ \,| { (\mD_2 \times \mD_2^\perp) \cup (\mD_2^\perp \times \mD_2) } } + 2 \Upsilon_{\alpha_2, {\tilde\alpha}_2 + {\tilde \theta}_2} + 2\<{\tilde \theta}_2 - {\tilde\alpha}_2, H_2 \>  \nonumber \\
&&\quad +\,2\,{\rm Sym}(H_2^{\flat} \otimes H_{13}^{\flat})
 -2\,{\tilde \delta}_{2,H_2} +2\Upsilon_{{\tilde \theta}_2, \theta_2 - \alpha_2} \nonumber \\
&&\quad
 +\frac{1}{2}\,\big( S_{\,\mD_2 , \mD_2^\perp} + \Div( H_{13} - H_2)\big)\, g_{2}^{\perp},\ B\big\>\,{\rm d}\vol_g , 
 \\
 \label{I4}
&& I_4 = \int_{\Omega} \<-\Div{h_2}|_{\mD_2\times\mD_2} -{\cal K}_2^\flat -H_{13}^\flat\otimes H_{13}^\flat
+\frac{1}{2}\Upsilon_{{\tilde h}_2,{\tilde h}_2} + \frac{1}{2} \Upsilon_{ {\tilde T}_2, {\tilde T}_2} +2\, {\cal T}_2^\flat  \nonumber \\
&&\quad +\,\frac{1}{2}\,\big( S_{\,\mD_2 , \mD_2^\perp} + \Div(H_2 -  H_{13})\big)\, g_{2}^\top,\ B\big\>\,{\rm d}\vol_g , 
\end{eqnarray}
\begin{eqnarray}
\label{I5}
&& I_5 = \int_{\Omega}\big\< - 4\Upsilon_{{\tilde\alpha}_{12}, \theta_{12}} + \Div{\tilde h_{12}} |_{\mD_{12}^\perp \times\mD_{12}^\perp } + \widetilde{\cal K}_{12}^\flat + H_{12}^\flat \otimes H_{12}^\flat - \frac{1}{2} \Upsilon_{h_{12},h_{12}}
 -\frac{1}{2} \Upsilon_{T_{12},T_{12}} - 2\,\tilde{\cal T}_{12}^\flat \nonumber \\
&&\quad - \,4\,\<\theta_{12},\, {\tilde H}_{12} \>  - \,2 ( \Div(\alpha_{12} -\tilde\theta_{12}) )_{ \,| ({\mD_{12} \times \mD_{12}^\perp) \cup (\mD_{12}^\perp \times \mD_{12}) } } - 2 \Upsilon_{\alpha_{12}, {\tilde\alpha}_{12} + {\tilde \theta}_{12}}
 - 2\<{\tilde \theta}_{12} - {\tilde\alpha}_{12}, H_{12} \>
 \nonumber \\
&&\quad -2\,{\rm Sym}(H_{12}^{\flat} \otimes {\tilde H}_{12}^{\flat})
+ 2 \, {\tilde \delta}_{12, H_{12}} - 2 \, \Upsilon_{{\tilde \theta}_{12}, \theta_{12} {-} \alpha_{12}}  \nonumber \\
 &&\quad
 -\,\frac{1}{2}\,(S_{\,\mD_{12}, \mD_{12}^\perp} +\Div( {\tilde H}_{12} - H_{12}))\,g_{3}^{\top}, B\big\>\,{\rm d}\vol_g , 
 \\
 \label{I6}
&& I_6 = \int_{\Omega} \< \Div{h_{12}} |_{\mD_{12}\times\mD_{12}} +{\cal K}_{12}^\flat +{\tilde H}_{12}^\flat \otimes {\tilde H}_{12}^\flat
- \frac{1}{2} \Upsilon_{ {\tilde h}_{12},{\tilde h}_{12}} -\frac{1}{2}\Upsilon_{ {\tilde T}_{12}, {\tilde T}_{12}} -2\,{\cal T}_{12}^\flat \nonumber \\
&&\quad -\,\frac{1}{2}\,\big( S_{\,\mD_{12} , \mD_{12}^\perp} + \Div(H_{12} -  {\tilde H}_{12} )\big)\, g_{3}^\perp,\ B\big\>\,{\rm d}\vol_g.  
\end{eqnarray}
\end{proposition}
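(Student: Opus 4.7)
The plan is to exploit identity \eqref{SD1D2}, which rewrites $2\,S_{\,\mD_1, \mD_2}$ pointwise as $S_{\,\mD_1,\mD_1^\perp}+S_{\,\mD_2,\mD_2^\perp}-S_{\,\mD_3,\mD_3^\perp}$, valid for the fixed distributions on $(M,g_t)$. Using $S_{\,\mD_3,\mD_3^\perp}=S_{\,\mD_{12},\mD_{12}^\perp}$ (symmetry in the two complementary slots) and the fact that the hypothesis only fixes the pair $\mD_1 \perp_{g_t} \mD_2$, at each $t$ we may interpret $\mD_m^\perp$ as the $g_t$-orthogonal complement of $\mD_m$ for $m=1,2,12$. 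Integrating against ${\rm d}\vol_{g_t}$ yields
\begin{equation*}
 2\,J_{\mD_1,\mD_2}(g_t) = \int_\Omega \! \big( S_{\,\mD_1, \mD_1^\perp}(g_t) + S_{\,\mD_2, \mD_2^\perp}(g_t) - S_{\,\mD_{12}, \mD_{12}^\perp}(g_t) \big)\,{\rm d}\vol_{g_t},
\end{equation*}
so it suffices to differentiate each of the three terms on the right at $t=0$ and combine.

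Each such term is the mixed scalar curvature action of a single pair of complementary orthogonal distributions, whose variational formula for arbitrary symmetric $(0,2)$-tensor variations $B$ was derived in \cite{rz-2}. That formula produces a symmetric $(0,2)$-tensor paired with $B$; its contribution naturally splits into a block supported on $\mD\times\mD$, one on $\mD^\perp\times\mD^\perp$, and one on the cross subspace $(\mD\times\mD^\perp)\cup(\mD^\perp\times\mD)$, matching respectively the $g_m^\top$-, $g_m^\perp$- and off-diagonal pieces of $B$. Applied to $\mD_1,\mD_2,\mD_{12}$ in turn, this produces the six integrals $I_1$--$I_6$: for each $m\in\{1,2,12\}$ the ``$g_m^\top$'' block becomes the even-indexed integral ($I_2,I_4,I_6$), and the ``$g_m^\perp$ together with cross'' block becomes the odd-indexed one ($I_1,I_3,I_5$). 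The minus sign preceding $S_{\,\mD_{12},\mD_{12}^\perp}$ in \eqref{SD1D2} accounts for the global sign flip visible throughout $I_5,I_6$, and the restriction ``${\,|\,(\mD_m \times \mD_m^\perp)\cup(\mD_m^\perp \times \mD_m)}$'' on the divergence terms simply records that the cross block acts only on the off-diagonal part of $B$.

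The final task is to expand every auxiliary tensor of \cite{rz-2} in the refined decomposition $TM=\mD_1\oplus\mD_2\oplus\mD_3$. For $m=1,2$ the tensors $h_m,\tilde h_m, T_m, \tilde T_m, \alpha_m, \tilde\alpha_m, \theta_m, \tilde\theta_m$ and their contractions ${\cal K}_m^\flat, \widetilde{\cal K}_m^\flat, {\cal T}_m^\flat, \widetilde{\cal T}_m^\flat, \Upsilon_{\cdot,\cdot}$ are already introduced in the preceding subsection; for $m=12$ the corresponding objects $h_{12},\tilde h_{12},\ldots$ are those listed just before the proposition, in particular with $\tilde H_{12}=H_3$ and $H_{12}=P_3H_1+P_3H_2$. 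The orthogonality hypothesis $B(\mD_1,\mD_2)=0$ is automatically respected by the stated decomposition and does not cause any term to drop out, so the result is assembled as the unmodified sum \eqref{dtS1S2general}.

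The principal obstacle is bookkeeping: instantiating the formula of \cite{rz-2} three times with distinct distribution pairs, translating each occurrence into the common three-fold refined notation, and checking that every block, sign, symmetrization factor and restriction clause matches across the six integrals. No conceptual difficulty arises beyond the already nontrivial machinery of \cite{rz-2} and identity \eqref{SD1D2}.
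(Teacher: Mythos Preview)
Your proposal is correct and follows essentially the same approach as the paper: use \eqref{SD1D2} at each time $t$ (with $\mD_3$ replaced by the $g_t$-orthogonal complement of $\mD_{12}$, which is legitimate precisely because the hypothesis keeps $\mD_1\perp_{g_t}\mD_2$), then apply the general variational formula from \cite{rz-2} separately to each of the three mixed scalar curvature functionals and translate into the three-fold notation. The paper's proof is just this, with the only additional remark being that the restriction subscripts on $\Div\tilde h_m$ and $\Div h_m$ are made explicit here because, unlike in \cite{rz-2}, more than two distributions are in play.
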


\begin{proof}
Since $S_{\,\mD_1 , \mD_1^\perp}$ is the mixed scalar curvature of $(\mD_1,\mD_1^\bot)$, we have the following
formula for general variations of $S_{\,\mD_1 , \mD_1^\perp}$:
\begin{eqnarray}\label{E-varJh-init2-1}
{\rm\frac{d}{dt}}\int_M S_{\,\mD_1 , \mD_1^\perp(t)} \,{\rm d}\vol_{g_t} |_{\,t=0} = I_1 + I_2,
\end{eqnarray}
which was obtained in \cite{rz-2} 
and adjusted to our notation defined above. 
In particular, we explicitly write restrictions of $\Div{\tilde h_1}  |_{\mD_1^\perp \times\mD_1^\perp }$ and $\Div h_1 |_{\mD_1\times\mD_1}$ to certain distributions -- this notation was omitted in \cite{rz-2}, where only two complementary distributions were considered.

Similarly, for general variations of $S_{\,\mD_2 , \mD_2^\perp}$ we get
\begin{eqnarray}\label{E-varJh-init2-2}
{\rm\frac{d}{dt}}\int_M S_{\,\mD_2 , \mD_2^\perp(t)} \,{\rm d}\vol_{g_t} |_{\,t=0} = I_3 + I_4,
\end{eqnarray}
and for $\mD_{12}=\mD_1 \oplus \mD_2$ we have the following:
\begin{eqnarray}\label{E-varJh-init2-3}
- \, {\rm\frac{d}{dt}}\int_M S_{\,\mD_{12} , \mD_{12}^\perp(t)} \,{\rm d}\vol_{g_t}|_{\,t=0} = I_5 + I_6 .
\end{eqnarray}

For
a family of metrics $g_t$ such that $g_t(X,Y) =0$ for $X \in \mD_1$ and $Y \in \mD_2$, we have
\[
 \mD_1(t)=\mD_1,\quad
 \mD_2(t)=\mD_2,\quad
 \mD_1^\perp(t)=\mD_2\oplus\mD_3(t),\quad
 \mD_2^\perp(t)=\mD_1\oplus\mD_3(t).
\]
Thus, as in \eqref{SD1D2}, we have
 $2S_{\,\mD_1, \mD_2} = S_{\,\mD_1 , \mD_1^\perp(t)} +  S_{\,\mD_2 , \mD_2^\perp(t)} -  S_{\,\mD_3(t) , \mD_3^\perp}$,
which yields
\begin{eqnarray*}
 2\,{\rm\frac{d}{dt}}\,J_{\mD_1,\mD_2}(g_t) |_{\,t=0}
 &=&{\rm\frac{d}{dt}}\int_M ( S_{\,\mD_1 , \mD_1^\perp(t)} + S_{\,\mD_2 , \mD_2^\perp(t)} -  S_{\,\mD_3(t) , \mD_3^\perp} )\,{\rm d}\vol_{g_t} |_{\,t=0}  \\
&=&{\rm\frac{d}{dt}}\int_M ( S_{\,\mD_1 , \mD_1^\perp(t)} + S_{\,\mD_2 , \mD_2^\perp(t)} -  S_{\,\mD_{12} , \mD_{12}^\perp} )\,{\rm d}\vol_{g_t} |_{\,t=0} ,
\end{eqnarray*}
since $\mD_3(t)$ is the $g(t)$-orthogonal complement of $\mD_{12}= \mD_1 \oplus \mD_2$. From \eqref{E-varJh-init2-1}--\eqref{E-varJh-init2-3}, we get \eqref{dtS1S2general}.
\end{proof}

To obtain the Euler-Lagrange equations for the action \eqref{Eq-Smix}, we note that \eqref{dtS1S2general} can be written as
\[
{\rm\frac{d}{dt}}\, J_{\mD_1,\mD_2}(g_t) |_{\,t=0}
 = \int_M \< \delta J_{\mD_1,\mD_2} , B \> \,{\rm d}\vol_g
\]
for the symmetric $(0,2)$-tensor $\delta J_{\mD_1,\mD_2}$ defined by the sum of integrands in \eqref{I1}-\eqref{I6}. It follows that metric $g$ is critical for the action \eqref{Eq-Smix} with respect to variations keeping orthogonality of $\mD_1$ and $\mD_2$ if and only if $\int_M \< \delta J_{\mD_1,\mD_2} , B \> \,{\rm d}\vol_g =0$ for all symmetric $(0,2)$-tensor fields $B$, such that $B(X,Y) =0$ for all $X \in \mathfrak{X}_{\mD_1}$ and $Y \in \mathfrak{X}_{\mD_2}$. This is equivalent to Euler-Lagrange equations
\begin{equation} \label{eqEL}
\delta J_{\mD_1,\mD_2} \, |_{\mD_i \times \mD_j} =0 ,
\end{equation}
where $(i,j) \notin \{ (1,2) , (2,1) \}$.
In next subsections, we derive the Euler-Lagrange equations for the action \eqref{Eq-Smix}, by finding
components of \eqref{eqEL} corresponding to particular distributions $\mD_i$. As every infinitesimal variation $B$ can be decomposed in components $B |_{\mD_i \times \mD_j}$, these Euler-Lagrange equations together can describe any variation of the metric (preserving orthogonality of $\mD_1$ and $\mD_2$), while each of them also has a separate meaning (describing variations keeping some components of $g$ fixed), analogously as $g^\perp$- and $g^\top$-variations considered in \cite{rz-2}.

\subsection{Variations for $B = B |_{\mD_1 \times \mD_1}$}

\begin{proposition}
Let variation $g_t$ of $g$ satisfy $B(X,Y) = B(P_1 X, P_1 Y)$ for all $X,Y \in \mathfrak{X}_M$.
Then ${\rm\frac{d}{dt}}\, J_{\mD_1,\mD_2}(g_t) |_{\,t=0}=0$ if and only if the following Euler-Lagrange equation~holds for all $X,Y\in\mD_1$:
\begin{eqnarray}\label{EL11withmixed}
&& -\,2( \Div (P_2 h_1) ) (X,Y)
 - 2\sum\nolimits_{\,i,a} \big( g( \Ts_{1,i} E_a , Y )\,g( A_{1,i} E_a, X )
 + g( \Ts_{1,i} E_a , X )\,g( A_{1,i} E_a, Y ) \big)  \nonumber\\
&& -\,2g(X, H_2)\,g(Y, H_2) -g(X, H_2)\,g(Y,H_3) -g(X, H_3)\,g(Y, H_2) \nonumber\\
&& +\,2\sum\nolimits_{i,j} \big(  g( h_2 ({\cal E}_i , {\cal E}_j ) , X  )\,g( h_2 ({\cal E}_i , {\cal E}_j ) , Y )
 + g( P_1 T_2 ({\cal E}_i , {\cal E}_j ) , X  )\,g( P_1 T_2 ({\cal E}_i , {\cal E}_j ) , Y ) \big)
\nonumber\\
&& -\,4\sum\nolimits_{i,a} g( \Ts_{1,i} E_a , X )\,g( \Ts_{1,i} E_a , Y )
 + S_{\,\mD_1, \mD_2} g(X,Y)+\,\Div(P_2 H_1 + P_2 H_3 - H_2 )\,g(X,Y) \nonumber\\
&& + \sum\nolimits_{i,\mu}\!\Big( {-}g( \mA_{13,i} e_\mu , Y )\,g( (\mh_{23}
{+} \mT_{23}) ({\cal E}_i ,  e_\mu ) , X )
 {-}g( \mA_{13,i} e_\mu , X )\,g( (\mh_{23}
 {+} \mT_{23}) ({\cal E}_i ,  e_\mu ) , Y ) \nonumber\\
&& -\,g( \mA_{12,\mu} {\cal E}_i , Y )\,g( (\mh_{23}
- \mT_{23}) ({\cal E}_i ,  e_\mu ) , X )
-g( \mA_{12,\mu} {\cal E}_i , X )\,g( (\mh_{23}
- \mT_{23}) ({\cal E}_i ,  e_\mu ) , Y ) \nonumber\\
&& +\,\big( g( \mTs_{12,\mu} {\cal E}_i , Y )\,g( \mA_{12, \mu} {\cal E}_i , X)
   + g( \mTs_{12,\mu} {\cal E}_i , X )\,g( \mA_{12, \mu} {\cal E}_i , Y) \big)
\nonumber\\
&& -\,g( \mA_{13,i} e_\mu , X )\,g( \mTs_{13,i} e_\mu , Y ) - g( \mA_{13,i} e_\mu , Y )\,g( \mTs_{13,i} e_\mu , X )
\nonumber\\
&& +\,2\,g( \mh_{23} ({\cal E}_i , e_\mu ) , X  )\,g( \mh_{23} ({\cal E}_i , e_\mu ) , Y )
+\,2\,g( \mT_{23} ({\cal E}_i , e_\mu ) , X  )\,g( \mT_{23} ({\cal E}_i , e_\mu ) , Y ) \nonumber\\
&& +\,2\,g( \mTs_{12,\mu} {\cal E}_i ,  X )\,g( \mTs_{12,\mu} {\cal E}_i , Y )
  -2\,g( \mTs_{13, i} e_\mu , X )\,g( \mTs_{13,i} e_\mu , Y) \Big)
 =0.
\end{eqnarray}
\end{proposition}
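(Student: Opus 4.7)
The plan is to specialize Proposition~\ref{propdtS1S2general} to variations with $B = B|_{\mD_1 \times \mD_1}$ and read off the $\mD_1 \times \mD_1$-block of the Euler-Lagrange equation \eqref{eqEL}. Since $\<C, B\> = \<C|_{\mD_1 \times \mD_1}, B\>$ for any symmetric $(0,2)$-tensor $C$ when $B$ is supported on $\mD_1 \times \mD_1$, the proof reduces to summing the restrictions of the six integrands in \eqref{I1}--\eqref{I6} to $\mD_1 \times \mD_1$ and setting the resulting tensor equal to zero pointwise.

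The first step is to eliminate whole summands via range considerations. In $I_1$ every tensor is either $\mD_1^\perp$-valued (so vanishes on $\mD_1 \times \mD_1$ after the $\Upsilon$ pairing), lives on a mixed block because one argument is forced through $P_1^\perp$ (for instance $\Upsilon_{\tilde\alpha_1,\theta_1}$, $\<\theta_1,H_{23}\>$, ${\rm Sym}(H_1^\flat \otimes H_{23}^\flat)$, $\tilde\delta_{1,H_1}$, $\Upsilon_{\tilde\theta_1,\theta_1-\alpha_1}$), or has been explicitly restricted to mixed indices by construction; the coefficient of $g_1^\perp$ is also zero on $\mD_1 \times \mD_1$. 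The same kind of argument shows $I_4$ is supported on $\mD_2 \times \mD_2$ and every term of $I_5$ is either $\mD_3$-valued or sits on a block meeting $\mD_3$, so $I_5$ vanishes on $\mD_1 \times \mD_1$ too. Only $I_2$ and the $\mD_1 \times \mD_1$-restrictions of $I_3$ and $I_6$ contribute.

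Inside $I_3$ the terms involving $\theta_2,\alpha_2,\tilde\theta_2,\tilde\alpha_2$ and the restricted $\Div(\alpha_2-\tilde\theta_2)$ kill themselves on $\mD_1 \times \mD_1$ (any $P_2 X$ vanishes), leaving $-\Div\tilde h_2$, $-\widetilde{\cal K}_2^\flat$, $-H_2^\flat\otimes H_2^\flat$, $\tfrac12\Upsilon_{h_2,h_2}$, $\tfrac12\Upsilon_{T_2,T_2}$, $2\tilde{\cal T}_2^\flat$ restricted to $\mD_1 \times \mD_1$, plus the scalar-times-$g_2^\perp$ term. These restrictions reduce $H_2$ to $P_1 H_2$, produce $\|P_1 h_2\|^2$-- and $\|P_1 T_2\|^2$--type factors, and convert $\widetilde{\cal K}_2^\flat$ and $\tilde{\cal T}_2^\flat$ into the $\<\Ts_{1,i}E_a,\cdot\>\<A_{1,i}E_a,\cdot\>$ and $\<\Ts_{1,i}E_a,\cdot\>\<\Ts_{1,i}E_a,\cdot\>$ sums of \eqref{EL11withmixed}. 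An analogous analysis for $I_6$, with $\mD_2$ replaced by $\mD_3$ and $\mD_2^\perp$ replaced by $\mD_{12}$, produces the $e_\mu$-index contributions involving $\tilde h_{12}|_{\mD_1 \times \mD_3}$, $\tilde T_{12}|_{\mD_1 \times \mD_3}$ and $h_{12}, T_{12}$ restricted to $\mD_1 \times \mD_1$; these are repackaged with the $\tilde h_2|_{\mD_1\times\mD_3}$ and $\tilde T_2|_{\mD_1\times\mD_3}$ contributions from $I_3$ by the Definition relating $\tilde h_3,\tilde h_2,\tilde h_{12}$ with $\mh_{23},\mh_{12},\mh_{13}$ and similarly for the $\mT$, $\mA$, $\mTs$ tensors.

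The final simplification collects the three scalar coefficients of $g|_{\mD_1 \times \mD_1}$: applying \eqref{SD1D2} gives $\tfrac12(S_{\,\mD_1,\mD_1^\perp} + S_{\,\mD_2,\mD_2^\perp} - S_{\,\mD_{12},\mD_{12}^\perp}) = S_{\,\mD_1,\mD_2}$, while the divergences combine via $H_{23} = P_1 H_2 + P_1 H_3$, $H_{13} = P_2 H_1 + P_2 H_3$, $H_{12} = P_3 H_1 + P_3 H_2$ and $\tilde H_{12} = H_3$ to $\Div(P_2 H_1 + P_2 H_3 - H_2)$. The principal obstacle is the bookkeeping of the middle step: keeping track of which $P_i$--components of each $(1,2)$--tensor survive in the $\Upsilon$ sums and in $\widetilde{\cal K}_2^\flat$, $\widetilde{\cal K}_{12}^\flat$, $\tilde{\cal T}_2^\flat$, $\tilde{\cal T}_{12}^\flat$ after restriction, and recognizing that the cross terms with one factor from $I_3$ and one from $I_6$ combine precisely into the symmetric $\mA$--$\mh$ and $\mA$--$\mT$ products appearing in \eqref{EL11withmixed}. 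Dividing the combined integrand by $2$ and appealing to the fundamental lemma of the calculus of variations on $\mD_1 \times \mD_1$ yields the stated equation.
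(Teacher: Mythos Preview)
Your approach is essentially the same as the paper's: specialize Proposition~\ref{propdtS1S2general} to $B=B|_{\mD_1\times\mD_1}$, observe that $I_1$, $I_4$, $I_5$ contribute nothing on $\mD_1\times\mD_1$, and then combine the surviving pieces of $I_2$, $I_3$, $I_6$ (the paper calls the surviving part of $I_3$ by $\tilde I_3$). Your identification of the scalar coefficient of $g|_{\mD_1\times\mD_1}$ via \eqref{SD1D2} and the divergence bookkeeping is exactly what the paper does.

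One remark on phrasing: there are no ``cross terms with one factor from $I_3$ and one from $I_6$'' in the literal sense. The $\mA$--$\mh$ and $\mA$--$\mT$ products in \eqref{EL11withmixed} arise because the combination $\Div h_{12}-\Div h_1-\Div\tilde h_2$ (one piece from each of $I_6$, $I_2$, $I_3$) leaves residual terms such as $\sum_\mu g(h_{12}(P_2\nabla_{e_\mu}X,Y),e_\mu)$ and $\sum_i g(\tilde h_2(P_3\nabla_{{\cal E}_i}X,Y),{\cal E}_i)$, and similar residuals appear in $-{\cal K}_1^\flat-\widetilde{\cal K}_2^\flat+{\cal K}_{12}^\flat$ and ${\cal T}_1^\flat+\tilde{\cal T}_2^\flat-{\cal T}_{12}^\flat$; these residuals are then rewritten in the $\mA$, $\mTs$, $\mh$, $\mT$ notation. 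So the ``combine precisely'' step really amounts to computing these three differences of triples and translating the leftovers; it is straightforward but is the place where all the actual bookkeeping lives, and your sketch does not make that structure explicit.
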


\begin{proof}
If $B(X,Y) = B(P_1 X, P_1 Y)$ for all $X,Y \in \mathfrak{X}_M$, then \eqref{dtS1S2general} has
the following terms:
\begin{eqnarray*} 
&& {\rm\frac{d}{dt}}\int_M \big( S_{\,\mD_1,\mD_1^\perp(t)} +S_{\,\mD_2,\mD_2^\perp(t)} -S_{\,\mD_3(t),\mD_{12}}\big)
 \,{\rm d}\vol_{g_t}|_{\,t=0} = I_2 + \tilde I_3 + I_6,
 \end{eqnarray*}
where
\begin{eqnarray} \label{tildeI3} \nonumber
&& \tilde I_3 = \int_{\Omega}\big\< -\Div{\tilde h_2} |_{\mD_{2}^\perp \times \mD_{2}^\perp} -\widetilde{\cal K}_2^\flat
 - H_2^\flat \otimes H_2^\flat +\frac{1}{2} \Upsilon_{h_2,h_2} + \frac{1}{2} \Upsilon_{T_2,T_2} \nonumber \\
&&\quad
 +\,2\,\tilde{\cal T}_2^\flat
+\frac{1}{2}\,\big( S_{\,\mD_2 , \mD_2^\perp} + \Div( H_{13} - H_2)\big)\, g_{2}^{\perp},\ B\big\>\,{\rm d}\vol_g \,
\end{eqnarray}
is the only 
part of \eqref{I3}, which may be non-vanishing, and $I_2 , I_6$ are given by \eqref{I2} and \eqref{I6}.
In~what follows, let $X,Y \in \mD_1$. 
Then we have
\begin{eqnarray*}
&& \Div h_{12} (X,Y)
=\sum\nolimits_a g(\nabla_{E_a} P_3 h_{1}(X,Y) , E_a ) + \sum\nolimits_i g(\nabla_{ {\cal E}_i } P_3 h_{1}(X,Y) , {\cal E}_i ) \\
&&\quad +\sum\nolimits_\mu g(\nabla_{e_\mu} P_3 h_{1}(X,Y) , e_\mu ) - \sum\nolimits_\mu g( h_{1}( P_1 \nabla_{e_\mu}  X , Y) , e_\mu ) \\
&&\quad -\sum\nolimits_\mu g( h_{1}( X , P_1 \nabla_{e_\mu} Y) , e_\mu ) - \sum\nolimits_\mu g( h_{12}( P_2 \nabla_{e_\mu}  X , Y) , e_\mu )
- \sum\nolimits_\mu g( h_{12}( X , P_2 \nabla_{e_\mu} Y) , e_\mu ),\\
&& \Div{h_1}(X,Y) =\sum\nolimits_a g(\nabla_{E_a} h_{1}(X,Y) , E_a ) + \sum\nolimits_i g(\nabla_{ {\cal E}_i } h_{1}(X,Y) , {\cal E}_i ) \\
&&\quad +\sum\nolimits_\mu g(\nabla_{e_\mu} h_{1}(X,Y) , e_\mu ) - \sum\nolimits_\mu g( h_{1}( P_1 \nabla_{e_\mu}  X , Y) , e_\mu ) \\
&&\quad -\sum\nolimits_\mu g( h_{1}( X , P_1 \nabla_{e_\mu} Y) , e_\mu ) -\sum\nolimits_i g( h_{1}( P_1 \nabla_{ {\cal E}_i }  X , Y) , {\cal E}_i )
 - \sum\nolimits_i g( h_{1}( X , P_1 \nabla_{ {\cal E}_i } Y) , {\cal E}_i ),\\
 &&\Div{\tilde h_2}(X,Y)
%
= \sum\nolimits_a g(\nabla_{E_a} P_2 h_1(X,Y) , E_a ) +\sum\nolimits_i g(\nabla_{ {\cal E}_i } P_2 h_1(X,Y) , {\cal E}_i ) \\
&& + \sum\nolimits_\mu g(\nabla_{e_\mu} P_2 h_1(X,Y) , e_\mu ) -\sum\nolimits_i g( h_1( P_1 \nabla_{ {\cal E}_i }  X , Y) , {\cal E}_i ) \\
&& - \sum\nolimits_i g( h_1( X, P_1 \nabla_{{\cal E}_i } Y), {\cal E}_i ) -\sum\nolimits_i g( {\tilde h_2}( P_3 \nabla_{ {\cal E}_i } X, Y), {\cal E}_i )
- \sum\nolimits_i g( {\tilde h_2}( X , P_3 \nabla_{ {\cal E}_i } Y) , {\cal E}_i ).
\end{eqnarray*}
Therefore,
\begin{eqnarray*}
 && \Div h_{12} (X,Y) - \Div {h_1} (X,Y)  - \Div {\tilde h_2} (X,Y) \\
&& = -2 \Div(P_2 h_1)(X,Y) -\sum\nolimits_\mu g( h_{12}( P_2 \nabla_{e_\mu}  X , Y) , e_\mu )\\
 && -\sum\nolimits_\mu g( h_{12}( X , P_2 \nabla_{e_\mu} Y) , e_\mu )
 +\sum\nolimits_i g( {\tilde h_2}( P_3 \nabla_{ {\cal E}_i }  X , Y) , {\cal E}_i )
 +\sum\nolimits_i g( {\tilde h_2}( X , P_3 \nabla_{ {\cal E}_i } Y) , {\cal E}_i ).
\end{eqnarray*}
Next,
\begin{eqnarray*}
 && -{\cal K}_1^\flat(X,Y) = -\sum\nolimits_i g( [\T_{1,i},\, A_{1,i}] X, \,Y ) - \sum\nolimits_\mu g([\T_{1,\mu}, \,A_{1,\mu}] X, \,Y) \\
 &&\quad = \sum\nolimits_{\,a,i} g( T_1( Y, E_a), {\cal E}_i)(h_1( E_a, X ), {\cal E}_i)
 +\sum\nolimits_{\,a,i} g( T_1(X, E_a), {\cal E}_i )( h_1(E_a, Y), {\cal E}_i) \\
 &&\quad +\sum\nolimits_{\,a,\mu} g( T_{1} (Y, E_a ) , e_\mu )(h_1( E_a , X) , e_\mu )
 +\sum\nolimits_{\,a,\mu} g( T_{1} (X, E_a ) , e_\mu )(h_1( E_a , Y) , e_\mu ),\\
 && {\cal K}_{12}^\flat (X,Y) = \sum\nolimits_\mu g( [ \T_{12,\mu}, \, A_{12,\mu}] X, \,Y ) \\
 &&\quad = - \sum\nolimits_{\,a,\mu} g( T_{1} (Y, E_a ) , e_\mu )(h_1( E_a , X) , e_\mu )
 -\sum\nolimits_{\,a,\mu} g( T_{1} (X, E_a ) , e_\mu )(h_1( E_a , Y) , e_\mu ) \\
&&\quad -\sum\nolimits_{\,i,\mu} g( T_{12} (Y, {\cal E}_i ) , e_\mu )(h_{12}( {\cal E}_i , X) , e_\mu )
-\sum\nolimits_{\,i,\mu} g( T_{12} (X, {\cal E}_i ) , e_\mu )(h_{12}( {\cal E}_i , Y) , e_\mu ),\\
 && -\widetilde{\cal K}_2^\flat (X,Y)
 = \sum\nolimits_{a,i} g( h_1 ( X , E_a ) , {\cal E}_i )\,g( {\cal E}_i , T_1 ( Y , E_a ) )
 +\sum\nolimits_{i,\mu} g( {\tilde h}_2 ( X , e_\mu ) , {\cal E}_i )\,g( {\cal E}_i , {\tilde T}_{2} ( Y , e_\mu ) ) \\
&&\quad +\sum\nolimits_{a,i} g( h_1 ( Y , E_a ) , {\cal E}_i )\,g( {\cal E}_i , T_1 ( X , E_a ) )
 +\sum\nolimits_{i,\mu} g( {\tilde h}_2 ( Y , e_\mu ) , {\cal E}_i )\,g( {\cal E}_i , {\tilde T}_{2} ( X , e_\mu ) ).
\end{eqnarray*}
Hence,
\begin{eqnarray*}
&&\hskip-5mm -{\cal K}_1^\flat (X,Y) - \widetilde{\cal K}_2^\flat (X,Y)  +	{\cal K}_{12}^\flat (X,Y)  \\
&& =2\sum\nolimits_{\,a,i} \big( g( T_1( Y, E_a ) , {\cal E}_i )( h_1( E_a, X ) , {\cal E}_i )
 + g( T_1( X, E_a ) , {\cal E}_i )( h_1( E_a, Y ) , {\cal E}_i ) \big) \\
&& +\sum\nolimits_{i,\mu} \Big( g( {\tilde h}_2 ( X , e_\mu ) , {\cal E}_i )\,g( {\cal E}_i , {\tilde T}_{2} ( Y , e_\mu ) )
 + g( {\tilde h}_2 ( Y , e_\mu ) , {\cal E}_i )\,g( {\cal E}_i , {\tilde T}_{2} ( X , e_\mu ) ) \\
&& -  g( {\tilde T}_3 (Y, {\cal E}_i ) , e_\mu )( {\tilde h}_3( {\cal E}_i , X) , e_\mu )
 - g( {\tilde T}_3 (X, {\cal E}_i ) , e_\mu )({\tilde h}_3( {\cal E}_i , Y) , e_\mu ) \Big) .
\end{eqnarray*}
Similarly,
\begin{eqnarray*}
&& ( - H_{23}^\flat \otimes H_{23}^\flat - H_2^\flat \otimes H_2^\flat  + {\tilde H}_{12}^\flat \otimes {\tilde H}_{12}^\flat )(X,Y)  \\
&&= -g(X, H_2+H_3)\,g(Y, H_2+H_3) - g(H_2,X)\,g(H_2,Y) + g(X, H_3)\,g(Y, H_3) \\
&&= -2g(X, H_2)\,g(Y, H_2) -g(X, H_2)\,g(Y,H_3) -g(X, H_3)\,g(Y, H_2),
\end{eqnarray*}
and
\begin{eqnarray*}
 && \frac{1}{2}\Upsilon_{{\tilde h}_1,{\tilde h}_1} (X,Y) =\sum\nolimits_{i,j} g( {\tilde h}_1({\cal E}_i , {\cal E}_j ) ,X) g( { \tilde h}_1({\cal E}_i, {\cal E}_j) ,Y) \\
 &&\quad +\,2\sum\nolimits_{i,\mu} g( {\tilde h}_1({\cal E}_i, e_\mu ) ,X) g( {\tilde h}_1 ({\cal E}_i , e_\mu ) ,Y)
  +\sum\nolimits_{\mu,\nu} g( {\tilde h}_1 (e_\nu, e_\mu) ,X)g( {\tilde h}_1 (e_\nu , e_\mu ) , Y), \\
 && \Upsilon_{ {\tilde h}_{12},{\tilde h}_{12}} (X,Y) =
 2\sum\nolimits_{\mu, \nu} g( {\tilde h}_{1}(e_\nu , e_\mu ) , X) g( {\tilde h}_{1} (e_\nu , e_\mu ), Y) , \\
&& \Upsilon_{ h_2, h_2 } (X,Y)
 = 2\sum\nolimits_{i,j} g( {\tilde h}_1 ({\cal E}_i, {\cal E}_j ) ,X) g( {\tilde h}_1 ({\cal E}_i , {\cal E}_j ) , Y) .
\end{eqnarray*}
Hence,
\begin{eqnarray*}
( \Upsilon_{{\tilde h}_1,{\tilde h}_1} +\Upsilon_{ h_2, h_2} -\Upsilon_{{\tilde h}_{12},{\tilde h}_{12}} )(X,Y)
 &=& 4\sum\nolimits_{i,j} g( {\tilde h}_1 ({\cal E}_i , {\cal E}_j ) ,X) g( {\tilde h}_1 ({\cal E}_i , {\cal E}_j ) , Y) \\
&& +4\sum\nolimits_{i,\mu} g( {\tilde h}_1 ({\cal E}_i, e_\mu ) ,X) g( {\tilde h}_1 ({\cal E}_i , e_\mu ), Y), 
\end{eqnarray*}
and, analogously,
\begin{eqnarray*}
( \Upsilon_{{\tilde T}_1,{\tilde T}_1} +\Upsilon_{T_2,T_2} -\Upsilon_{ {\tilde T}_{12},{\tilde T}_{12}} ) (X,Y)
 &=& 4\sum\nolimits_{i,j} g( {\tilde T}_1 ({\cal E}_i, {\cal E}_j ) ,X) g( {\tilde T}_1 ({\cal E}_i , {\cal E}_j ) ,Y )\\
&& +4\sum\nolimits_{i,\mu} g( {\tilde T}_1 ({\cal E}_i , e_\mu ) ,X) g( {\tilde T}_1 ({\cal E}_i , e_\mu ) ,Y) .
\end{eqnarray*}
We have
\begin{eqnarray*}
 && {\cal T}_1^\flat(X,Y) = \sum\nolimits_{i} g(T_{1,i}^\sharp T_{1,i}^\sharp X, Y)
 +\sum\nolimits_{\mu} g(T_{1,\mu}^\sharp T_{1,\mu}^\sharp X, Y ) \\
&&\quad =-\sum\nolimits_{i,a} g( T_{1}( X, E_a ) , {\cal E}_i )\,g({\cal E}_i, T_{1} (Y, E_a) )
 -\sum\nolimits_{\mu,a} g( T_1 ( X, E_a ) , e_\mu )\,g( e_\mu , T_1 ( Y , E_a) ),
 \end{eqnarray*}
\begin{eqnarray*}
 && -{\cal T}_{12}^\flat(X,Y) = - \sum\nolimits_{\mu} g(T_{12,\mu}^\sharp T_{12,\mu}^\sharp X, Y )
 = \sum\nolimits_{\mu} g( T_{12,\mu}^\sharp X, T_{12,\mu}^\sharp Y ) \\
&&\quad =\sum\nolimits_{\mu, a} g( T_{1} ( X, E_a) , e_\mu )\,g(e_\mu, T_{1} ( Y , E_a) )
 +\sum\nolimits_{\mu, j} g( T_{12} ( X, {\cal E}_j ) , e_\mu )\,g(e_\mu, T_{12} ( Y, {\cal E}_j ) ),\\
 && \tilde{\cal T}_{2}^\flat (X,Y) = \sum\nolimits_i g( \tT_{2,i} \tT_{2,i} X,Y) \\
&&\quad =-\sum\nolimits_{i,\mu} g( {\tilde T}_2 ( X,  e_\mu ) ,{\cal E}_i  )\,g( {\cal E}_i  , {\tilde T}_{2} ( Y , e_\mu ) )
 -\sum\nolimits_{a,i} g( T_1 ( X,  E_a ) {\cal E}_i )\,g( {\cal E}_i  , T_1 ( Y , E_a ) ).
\end{eqnarray*}
Hence,
\begin{eqnarray*}
 && ( {\cal T}_1^\flat + \tilde{\cal T}_{2}^\flat - {\cal T}_{12}^\flat )(X,Y)
 =-2\sum\nolimits_{i,a} g( T^\sharp_{1,{\cal E}_i}(E_a) ,X) g( T^\sharp_{1,{\cal E}_i}(E_a) ,Y) \\
 && +\sum\nolimits_{\mu, j} g( {\tilde T}_{3,e_\mu}({\cal E}_j) ,X) g( {\tilde T}_{3,e_\mu}({\cal E}_j) ,Y)
 -\sum\nolimits_{i,\mu} g( {\tilde T}_{2,{\cal E}_i}( e_\mu) ,X) g( {\tilde T}_{2,{\cal E}_i}( e_\mu) ,Y).
%
%
%
\end{eqnarray*}
By the above and $\Div\big( H_1 - H_{23} + H_{13} - H_2 - H_{12} +  {\tilde H}_{12} \big) = 2\,\Div\big( P_2 H_1 + P_2 H_3 - H_2 \big) $,
the Euler-Lagrange equation for variations of metric with the property
 $B(X,Y) = B(P_1 X, P_1 Y)$
is the following:
\begin{eqnarray} \label{EL11}
 && -\,2( \Div (P_2 h_1) ) (X,Y) + \sum\nolimits_{i} g( {\tilde h}_2 ( P_3 \nabla_{ {\cal E}_i } X,Y) , {\cal E}_i )
 + \sum\nolimits_{i} g( {\tilde h}_2 ( P_3 \nabla_{ {\cal E}_i } Y,X) , {\cal E}_i ) \nonumber\\
&& -\sum\nolimits_\mu g( h_{12}( P_2 \nabla_{e_\mu}  X , Y) , e_\mu ) - \sum\nolimits_\mu g( h_{12}( X , P_2 \nabla_{e_\mu} Y) , e_\mu ) \nonumber\\
&& +\,2\sum\nolimits_{\,a,i} g( T_1( Y, E_a ) , {\cal E}_i )( h_1( E_a, X ) , {\cal E}_i )
 +2\sum\nolimits_{\,a,i} g( T_1( X, E_a ) , {\cal E}_i )( h_1( E_a, Y ) , {\cal E}_i ) \nonumber\\
&& -\sum\nolimits_{\,i,\mu} g( {\tilde T}_3 (Y, {\cal E}_i ) , e_\mu )( {\tilde h}_3( {\cal E}_i , X) , e_\mu )
 -\sum\nolimits_{\,i,\mu} g( {\tilde T}_3 (X, {\cal E}_i ) , e_\mu )({\tilde h}_3( {\cal E}_i , Y) , e_\mu ) \nonumber\\
&& +\sum\nolimits_{i,\mu} g( {\tilde h}_2 ( X , e_\mu ) , {\cal E}_i )\,g( {\cal E}_i , {\tilde T}_{2} ( Y , e_\mu ) )
 +\sum\nolimits_{i,\mu} g( {\tilde h}_2 ( Y , e_\mu ) , {\cal E}_i )\,g( {\cal E}_i , {\tilde T}_{2} ( X , e_\mu ) ) \nonumber\\
&& -\,2\,g(X, H_2)\,g(Y, H_2) -g(X, H_2)\,g(Y,H_3) -g(X, H_3)\,g(Y, H_2) \nonumber\\
&& +\,2\sum\nolimits_{i,j} g( {\tilde h}_1 ({\cal E}_i , {\cal E}_j ) , X  )\,g( {\tilde h}_1 ({\cal E}_i , {\cal E}_j ) , Y )
 +2\sum\nolimits_{i,\mu} g( {\tilde h}_1 ({\cal E}_i , e_\mu ) , X  )\,g( {\tilde h}_1 ({\cal E}_i , e_\mu ) , Y ) \nonumber\\
&& +\,2\sum\nolimits_{i,j} g( {\tilde T}_1 ({\cal E}_i , {\cal E}_j ) , X  )\,g( {\tilde T}_1 ({\cal E}_i , {\cal E}_j ) , Y )
 +2\sum\nolimits_{i,\mu} g( {\tilde T}_1 ({\cal E}_i , e_\mu ) , X  )\,g( {\tilde T}_1 ({\cal E}_i , e_\mu ) , Y ) \nonumber\\
&& -\,4\sum\nolimits_{i,a} g( T_{1}( X, E_a ) , {\cal E}_i )\,g({\cal E}_i, T_{1} (Y, E_a) )
+2\sum\nolimits_{\mu, j} g( {\tilde T}_3 ( X, {\cal E}_j ) , e_\mu )\,g(e_\mu, {\tilde T}_3 ( Y, {\cal E}_j ) ) \nonumber\\
&& -\,2\sum\nolimits_{i,\mu} g( {\tilde T}_2 ( X,  e_\mu ) ,{\cal E}_i  )\,g( {\cal E}_i  , {\tilde T}_{2} ( Y , e_\mu ) )
+S_{\,\mD_1, \mD_2}\cdot g(X,Y) \nonumber\\
&& +\,\Div(
 P_2 H_1 + P_2 H_3 - H_2 )\,g(X,Y) =0,\quad
 X,Y \in \mD_1.
\end{eqnarray}
Using
$h_{12} ( P_2 \nabla_{ e_\mu } X,Y) = \mh_{12} ( P_2 \nabla_{ e_\mu } X,Y)$ and
\[
P_1 \nabla_{ e_\mu } {\cal E}_i = h_{23} ({\cal E}_i ,  e_\mu ) - T_{23} ({\cal E}_i ,  e_\mu )  = \mh_{23} ({\cal E}_i ,  e_\mu ) - \mT_{23} ({\cal E}_i ,  e_\mu ),
\]
we obtain
\begin{eqnarray*}
 && \sum\nolimits_{\mu} g( h_{12} ( P_2 \nabla_{ e_\mu } X,Y) , e_\mu ) =
\sum\nolimits_{i,\mu} g( \mA_{12,\mu} Y , {\cal E}_i )\, g( {\cal E}_i , P_2 \nabla_{ e_\mu } X ) \\
&&= -\!\sum\nolimits_{i,\mu} g( \mA_{12,\mu} {\cal E}_i , Y )\,g( \nabla_{ e_\mu }{\cal E}_i, X )
 = -\sum\nolimits_{i,\mu} g( \mA_{12,\mu} {\cal E}_i, Y )\,g( \mh_{23} ({\cal E}_i,  e_\mu ) - \mT_{23} ({\cal E}_i,  e_\mu ), X ) .
\end{eqnarray*}
Similarly,
\begin{eqnarray*}
 && \sum\nolimits_{i} g( {\tilde h}_2 ( P_3 \nabla_{ {\cal E}_i } X,Y) , {\cal E}_i )
 = \sum\nolimits_{i} g( h_{13} ( P_3 \nabla_{ {\cal E}_i } X,Y) , {\cal E}_i ) \\
 &&\quad = - \sum\nolimits_{i,\mu} g( \mA_{13,i} e_\mu , Y )\,g( \mh_{23} ({\cal E}_i ,  e_\mu ) + \mT_{23} ({\cal E}_i ,  e_\mu ) , X ) ,\\
 && \sum\nolimits_{\,a,i} g( T_1( Y, E_a ) , {\cal E}_i )\,g( h_1( E_a, X ) , {\cal E}_i )
 = -\sum\nolimits_{\,a,i} g( \Ts_{1,i} E_a , Y )\,g( A_{1,i} E_a, X ) ,\\
 && -\sum\nolimits_{\,i,\mu} g( {\tilde T}_3 (Y, {\cal E}_i ) , e_\mu )\,g( {\tilde h}_3( {\cal E}_i , X) , e_\mu )
 =-\sum\nolimits_{\,i,\mu} g( \mT_{12} (Y, {\cal E}_i ) , e_\mu )\,g( \mh_{12} ( {\cal E}_i , X) , e_\mu ) \\
 &&\quad = -\sum\nolimits_{\,i,\mu} g( \mTs_{12,\mu} Y, {\cal E}_i )\,g( \mA_{12, \mu} {\cal E}_i , X)
 = \sum\nolimits_{\,i,\mu} g( \mTs_{12,\mu} {\cal E}_i , Y )\,g( \mA_{12, \mu} {\cal E}_i , X) ,
\end{eqnarray*}
\begin{eqnarray*}
&&\sum\nolimits_{i,\mu} g( {\tilde h}_2 ( X , e_\mu ) , {\cal E}_i )\,g( {\cal E}_i , {\tilde T}_{2} ( Y , e_\mu ) )
 = \sum\nolimits_{i,\mu} g( \mh_{13} ( X , e_\mu ) , {\cal E}_i )\,g( {\cal E}_i , \mT_{13} ( Y , e_\mu ) ) \\
&&\quad = \sum\nolimits_{i,\mu} g( \mA_{13,i} X , e_\mu )\,g( \mTs_{13,i} Y , e_\mu )
 = - \sum\nolimits_{i,\mu} g( \mA_{13,i} e_\mu , X )\,g( \mTs_{13,i} e_\mu , Y ),\\
&& -\,4\sum\nolimits_{i,a} g( T_{1}( X, E_a ) , {\cal E}_i )\,g({\cal E}_i, T_{1} (Y, E_a) )
= -\,4\sum\nolimits_{i,a} g( \Ts_{1,i} X, E_a )\,g( \Ts_{1,i} Y, E_a ) \\
&&\quad= -\,4\sum\nolimits_{i,a} g( \Ts_{1,i} E_a , X )\,g( \Ts_{1,i} E_a , Y ) , \\
&& 2\sum\nolimits_{\mu, j} g( {\tilde T}_3 ( X, {\cal E}_j ) , e_\mu )\,g(e_\mu, {\tilde T}_3 ( Y, {\cal E}_j ) ) =
2\sum\nolimits_{\mu, j} g( \mT_{12} ( X, {\cal E}_j ) , e_\mu )\,g( e_\mu, \mT_{12} ( Y, {\cal E}_j ) ) \\
&&\quad = 2\sum\nolimits_{\mu, j} g( \mTs_{12,\mu} X, {\cal E}_j )\,g( \mTs_{12,\mu} Y, {\cal E}_j )
= 2\sum\nolimits_{\mu, j} g( \mTs_{12,\mu} {\cal E}_j ,  X )\,g( \mTs_{12,\mu} {\cal E}_j , Y ),\\
 && -\,2\sum\nolimits_{i,\mu} g( {\tilde T}_2 ( X,  e_\mu ) ,{\cal E}_i  )\,g( {\cal E}_i  , {\tilde T}_{2} ( Y , e_\mu ) ) =
-\,2\sum\nolimits_{i,\mu} g( \mT_{13} ( X,  e_\mu ) ,{\cal E}_i  )\,g( {\cal E}_i  , \mT_{13} ( Y , e_\mu ) ) \\
&&\quad -\,2\sum\nolimits_{i,\mu} g( \mTs_{13, i} X,  e_\mu  )\,g( \mTs_{13,i} Y , e_\mu )
 =-\,2\sum\nolimits_{i,\mu} g( \mTs_{13, i} e_\mu , X )\,g( \mTs_{13,i} e_\mu , Y) .
\end{eqnarray*}
Using the above computations, we obtain \eqref{EL11withmixed} as an equivalent form of \eqref{EL11}.
\end{proof}

The Euler-Lagrange equation for variations of metric satisfying
 $B(X,Y) = B(P_2 X, P_2 Y)$
(i.e., when the metric changes only along $\mD_2 \times \mD_2$)
is dual to \eqref{EL11withmixed}, i.e., can be obtained from \eqref{EL11withmixed} by interchanging $\mD_1$ and $\mD_2$ (and the corresponding tensors and vectors from their frames).

\subsection{Variations for $B = B |_{\mD_3 \times \mD_3}$}

\begin{proposition}
Let  variation $g_t$ of $g$ satisfy $B(X,Y) = B(P_3 X, P_3 Y)$ for all $X,Y \in \mathfrak{X}_M$.
Then ${\rm\frac{d}{dt}}\, J_{\mD_1,\mD_2}(g_t) |_{\,t=0}=0$ if and only if the following Euler-Lagrange equation holds for all $X,Y \in \mD_3$:
\begin{eqnarray} \label{EL33withmixed}
&& \sum\nolimits_{\,a,i}\big( -g(\mA_{23, a}{\cal E}_i, Y)\,g( (\mh_{12}
+ \mT_{12})({E_a}, {\cal E}_i ), X)\nonumber\\
&& -\,g( \mA_{23, a} {\cal E}_i , X )\,g( (\mh_{12}
+ \mT_{12}) ( {E_a} , {\cal E}_i ) , Y ) \nonumber\\
&& -\,g( \mA_{13, i} E_a , Y )\,g( (\mh_{12}
- \mT_{12}) ( {E_a} , {\cal E}_i ) , X ) \nonumber\\
&& -\,g( \mA_{13, i} E_a , X )\,g( (\mh_{12}
- \mT_{12}) ( {E_a} , {\cal E}_i ) , Y ) \nonumber\\
&& -\,g( \mA_{23,a} {\cal E}_i , X )\,g( \mTs_{23,a} {\cal E}_i , Y )
 -g( \mA_{23,a} {\cal E}_i , Y )\,g(\mTs_{23,a} {\cal E}_i , X ) \nonumber\\
&& -\,g( \mA_{13, i} E_a , X )\,g( \mTs_{13,i} E_a , Y )
 - g( \mA_{13, i} E_a , Y )\,g( \mTs_{13,i} E_a , X ) \nonumber\\
&& -\,2 g( \mTs_{23,a} {\cal E}_i , X )\,g( \mTs_{23,a} {\cal E}_i , Y )
-2 g( \mTs_{13,i} E_a , X )\,g( \mTs_{13,i} E_a , Y ) \nonumber\\
&& -\,2 g(X, \mh_{12} ( E_a , {\cal E}_i ) )\,g(Y, \mh_{12} ( E_a , {\cal E}_i ) )
 - 2 g(X, \mT_{12} ( E_a , {\cal E}_i ) )\,g(Y, \mT_{12} ( E_a , {\cal E}_i ) )
 \big)\nonumber\\
&& +\,g(P_3 H_2 , X)\,g(P_3 H_1, Y) + g(P_3 H_2 , Y)\,g(P_3 H_1, X)
 +S_{\,\mD_1,\mD_2}\cdot g(X,Y)
 = 0.
\end{eqnarray}
\end{proposition}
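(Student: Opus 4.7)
The strategy parallels that of the previous proposition: start from the master formula \eqref{dtS1S2general} and identify which of the six integrands $I_1,\ldots,I_6$ can produce non-zero contributions when paired (via $\langle\cdot,\cdot\rangle$) with a tensor $B$ supported in $\mD_3\times\mD_3$. Since the ``bulk'' metric factors appearing in $I_2, I_4, I_6$ are $g_1^\top, g_2^\top, g_3^\perp$ respectively, all of which vanish on $\mD_3\times\mD_3$, one needs to check that each individual tensor summand in $I_2, I_4, I_6$ also vanishes on $\mD_3\times\mD_3$. For example $\Div h_1|_{\mD_1\times\mD_1}$, ${\cal K}_1^\flat$, $H_{23}^\flat\otimes H_{23}^\flat$, $\Upsilon_{\tilde h_1,\tilde h_1}$, $\Upsilon_{\tilde T_1,\tilde T_1}$, ${\cal T}_1^\flat$ (from $I_2$) are all zero when restricted to $\mD_3\times\mD_3$; similarly for $I_4$ and for the terms in $I_6$ (which live on $\mD_{12}\times\mD_{12}$ or on mixed products with $\mD_3$). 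Hence only the $\mD_3\times\mD_3$-components of $I_1$, $I_3$, $I_5$ survive.

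Next I would restrict each tensor appearing in $I_1, I_3, I_5$ to $\mD_3\times\mD_3$ and rewrite it in terms of the mixed tensors $\mh_{ij}, \mT_{ij}, \mA_{ij}, \mTs_{ij}$. The systematic dictionary is provided by the identities $P_{k} h_m = \tilde h_j|_{\mD_m\times \mD_m}$ for $\{j,k,m\}=\{1,2,3\}$ and, for $X,Y$ in $\mD_3$, $\tilde h_1(X,Y)=h_1^{*}$-type combinations, etc., together with $\mh_{23}({\cal E}_i,e_\mu)=\tilde h_1({\cal E}_i,e_\mu)$, and analogous identities for the integrability tensors $\tilde T_m$. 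The terms $\Div \tilde h_m|_{\mD_m^\perp\times \mD_m^\perp}$ restricted to $\mD_3\times\mD_3$ produce, after discarding derivatives of tensor values (which vanish since they lie outside $\mD_3$) and applying Leibniz, exactly the mixed products $g(\mA_{ij,\cdot}\cdot,\cdot)\,g((\mh_{kl}\pm\mT_{kl})(\cdot,\cdot),\cdot)$ appearing in \eqref{EL33withmixed}, in the same way that such terms arose in the proof of \eqref{EL11withmixed}. The $\Upsilon$ terms unfold into sums over pairs of indices in $\mD_1$ and $\mD_2$ and repackage into $\mh_{12}, \mT_{12}$ squares.

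The coefficient of $g(X,Y)$ is handled by the cancellation
\[
\Div(\tilde H_1 + \tilde H_2 + \tilde H_3 - H_1 - H_2 - H_3) = 0,
\]
which follows from the pointwise identity $\tilde H_m = \sum_{j\ne m} P_m H_j$ and $H_m = \sum_{j\ne m} P_j H_m$ rearranging to the same sum. Consequently, on $\mD_3\times\mD_3$ the divergence contributions from $I_1,I_3,I_5$ cancel, while the scalar curvature contributions give
\[
\tfrac{1}{2}\bigl(S_{\,\mD_1,\mD_1^\perp} + S_{\,\mD_2,\mD_2^\perp} - S_{\,\mD_{12},\mD_{12}^\perp}\bigr)\,g(X,Y) = S_{\,\mD_1,\mD_2}\,g(X,Y)
\]
by \eqref{SD1D2}. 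The cross-terms $H_1^\flat\otimes H_1^\flat$, $H_2^\flat\otimes H_2^\flat$, $H_{12}^\flat\otimes H_{12}^\flat$ restricted to $\mD_3$ combine via $H_{12}=P_3(H_1+H_2)$ into $2\,{\rm Sym}(P_3 H_1^\flat\otimes P_3 H_2^\flat)$, which is the $g(P_3 H_2,X)\,g(P_3 H_1,Y)+g(P_3 H_2,Y)\,g(P_3 H_1,X)$ term in \eqref{EL33withmixed}.

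The main obstacle is the sheer bookkeeping: each of $I_1, I_3, I_5$ has roughly a dozen tensor summands, and each must be restricted to $\mD_3\times\mD_3$ and rewritten in terms of the mixed tensors $\mh_{ij}, \mT_{ij}, \mA_{ij}, \mTs_{ij}$ with the correct sign and symmetrisation. The computation is, however, entirely algorithmic once the translation dictionary between $\{h_m,\tilde h_m, T_m,\tilde T_m\}$ restricted to $\mD_3$ and $\{\mh_{ij}, \mT_{ij}\}$ is fixed, and it mirrors term-for-term the computation performed in the $\mD_1\times\mD_1$ case. Once all terms are collected, they match \eqref{EL33withmixed}, and the standard argument that pairing with an arbitrary symmetric $B$ supported in $\mD_3\times\mD_3$ must vanish then gives the required equivalence.
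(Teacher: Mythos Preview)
Your proposal is correct and follows essentially the same approach as the paper: you correctly identify that only (parts of) $I_1, I_3, I_5$ contribute when $B$ is supported on $\mD_3\times\mD_3$, you handle the divergence cancellation $\Div(\tilde H_1+\tilde H_2+\tilde H_3-H_1-H_2-H_3)=0$ and the combination of the $H^\flat\otimes H^\flat$ terms exactly as the paper does, and the remaining bookkeeping---translating each surviving term into the $\mh_{ij},\mT_{ij},\mA_{ij},\mTs_{ij}$ language---is precisely the algorithmic rewriting the paper carries out to pass from its intermediate equation~\eqref{EL33} to~\eqref{EL33withmixed}.
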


\begin{proof}
For $B$ non-zero only on $\mD_3 \times \mD_3$ (i.e., $B = B |_{\mD_3  \times \mD_3}$),
we have in \eqref{dtS1S2general}
only
the following terms:
\begin{eqnarray*}
\nonumber
&& {\rm\frac{d}{dt}}\int_M \big( S_{\,\mD_1 , \mD_1^\perp(t)} +  S_{\,\mD_2 , \mD_2^\perp(t)} - S_{\,\mD_3(t) , \mD_3^\perp}\big)
 \,{\rm d}\vol_{g_t} |_{\,t=0} = \tilde I_1 + \tilde I_3 + \tilde I_5,
 \end{eqnarray*}
where ${\tilde I}_3$, given by \eqref{tildeI3}, and
\begin{eqnarray*}
&& \tilde I_1 =\int_{\Omega}
\big\< -\Div{\tilde h_1}  |_{\mD_{1}^\perp \times \mD_{1}^\perp} - \widetilde{\cal K}_1^\flat
- H_1^\flat \otimes H_1^\flat + \frac{1}{2} \Upsilon_{h_1,h_1}
+ \frac{1}{2} \Upsilon_{T_1,T_1}
+2\,\tilde{\cal T}_1^\flat \nonumber \\
&& +\,\frac{1}{2}\,\big( S_{\,\mD_1 , \mD_1^\perp} + \Div( H_{23} - H_1)\big)\, g_{1}^{\perp},\ B \big\>\,{\rm d}\vol_g ,\,\nonumber \\
&& \tilde I_5 = \int_{\Omega}
\big\< \Div{\tilde h_{12}}  |_{\mD_{12}^\perp \times \mD_{12}^\perp } + \widetilde{\cal K}_{12}^\flat
+ H_{12}^\flat \otimes H_{12}^\flat - \frac{1}{2} \Upsilon_{h_{12},h_{12}}
- \frac{1}{2} \Upsilon_{T_{12},T_{12}}
- 2\,\tilde{\cal T}_{12}^\flat \nonumber \\
&& - \frac{1}{2}\,\big( S_{\,\mD_{12} , \mD_{12}^\perp} + \Div( {\tilde H}_{12} - H_{12})\big)\, g_{3}^{\top},\ B \big\>\,{\rm d}\vol_g,
\end{eqnarray*}
are the only terms of \eqref{I3}, \eqref{I1} and \eqref{I5}, which may be non-vanishing.

In~what follows 
we assume that 
$X,Y \in \mD_3$. 
We have
\begin{eqnarray*}
 && \widetilde{\cal K}_{12}^\flat(X,Y) = \sum\nolimits_{a}[\tT_{12,a},\,\tA_{12,a}]^\flat(X,Y) +\sum\nolimits_{i}[\tT_{12,i},\,\tA_{12,i}]^\flat(X,Y)\\
&&\quad = -\sum\nolimits_{a,\mu} \big(g(  {\tilde h}_{1}( X, e_\mu ) , E_a )\,g( E_a , {\tilde T}_{1} (Y , e_\mu) )
 + g(  {\tilde h}_{1}( Y, e_\mu ) , E_a )\,g( E_a , {\tilde T}_{1} (X , e_\mu) )\big) \\
&&\quad  - \sum\nolimits_{i, \mu} \big( g( {\tilde h}_{2}( X, e_\mu ) , {\cal E}_i )\,g( {\cal E}_i  , {\tilde T}_{2} ( Y , e_\mu) )
 + g( {\tilde h}_{2}( Y, e_\mu ) , {\cal E}_i )\,g( {\cal E}_i  , {\tilde T}_{2} ( X , e_\mu) ) \big).
\end{eqnarray*}
On the other hand, we have
\begin{eqnarray*}
 && -\widetilde{\cal K}_1^\flat (X,Y) = - \sum\nolimits_a g( [ \tT_{1,a}, \, \tA_{1,a}] X, \,Y  ) \\
&&\quad =\sum\nolimits_{a,i} \big( g( {\tilde h}_1 ( X , {\cal E}_i ) , E_a )\,g(E_a , {\tilde T}_{1} ( Y , {\cal E}_i) )
  + g( {\tilde h}_1 ( Y , {\cal E}_i ) , E_a )\,g(E_a , {\tilde T}_{1} ( X , {\cal E}_i) ) \big)\\
&&\quad +\sum\nolimits_{a,\mu} \big( g( {\tilde h}_1 ( X , e_\mu ) , E_a )\,g(E_a , {\tilde T}_{1} ( Y , e_\mu ) )
 + g( {\tilde h}_1 ( Y , e_\mu ) , E_a )\,g(E_a , {\tilde T}_{1} ( X , e_\mu ) ) \big),\\
 && - \widetilde{\cal K}_2^\flat (X,Y) =
 \sum\nolimits_{a,i} \big( g( {\tilde h}_2 ( X , E_a ) , {\cal E}_i )\,g( {\cal E}_i , {\tilde T}_{2} ( Y , E_a ) )
 + g( {\tilde h}_2 ( Y , E_a ) , {\cal E}_i )\,g( {\cal E}_i , {\tilde T}_{2} ( X , E_a ) ) \big)\\
&&\quad
 +\sum\nolimits_{i,\mu} \big( g( {\tilde h}_2 ( X , e_\mu ) , {\cal E}_i )\,g( {\cal E}_i , {\tilde T}_{2} ( Y , e_\mu ) )
 + g( {\tilde h}_2 ( Y , e_\mu ) , {\cal E}_i )\,g( {\cal E}_i , {\tilde T}_{2} ( X , e_\mu ) ) \big).
\end{eqnarray*}
Hence,
\begin{eqnarray*}
 && \widetilde{\cal K}_{12}^\flat(X,Y) -\widetilde{\cal K}_1^\flat (X,Y)- \widetilde{\cal K}_2^\flat (X,Y)
 =\sum\nolimits_{a,i} \big( g( {\tilde h}_1 ( X , {\cal E}_i ) , E_a )\,g(E_a , {\tilde T}_{1} ( Y , {\cal E}_i) ) \\
&& \quad +\,g( {\tilde h}_1 ( Y , {\cal E}_i ) , E_a )\,g(E_a , {\tilde T}_{1} ( X , {\cal E}_i) )
 + g( {\tilde h}_2 ( X , E_a ) , {\cal E}_i )\,g( {\cal E}_i , {\tilde T}_{2} ( Y , E_a ) ) \\
&& \quad +\,g( {\tilde h}_2 ( Y , E_a ) , {\cal E}_i )\,g( {\cal E}_i , {\tilde T}_{2} ( X , E_a ) )\big) .
\end{eqnarray*}
Also we get
\begin{eqnarray*}
&& ( H_{12}^\flat \otimes H_{12}^\flat  -H_1^\flat \otimes H_1^\flat - H_2^\flat \otimes H_2^\flat )(X,Y) \\
&& = ( (P_3 H_1 + P_3 H_2)^\flat \otimes (P_3 H_1 + P_3 H_2)^\flat
 - H_1^\flat \otimes H_1^\flat - H_2^\flat \otimes H_2^\flat )(X,Y)
\\
&& = ( P_3 H_2 \otimes P_3 H_1 + P_3 H_1 \otimes P_3 H_2 )(X,Y),\\
 && \Upsilon_{h_{12},h_{12}} (X,Y) = 2\sum\nolimits_{a,b} g(X, h_1 (E_a, E_b) )\,g(Y, h_1 (E_a, E_b) ) \\
&&\quad +\,2\sum\nolimits_{i,j} g(X, h_2 ( {\cal E}_i , {\cal E}_j ) )\,g(Y, h_2 ( {\cal E}_i , {\cal E}_j ) )
 +4\sum\nolimits_{a,i} g(X, h_{12} ( {\cal E}_i , E_a ) )\,g(Y, h_{12} ( {\cal E}_i , E_a ) ),\\
 && \Upsilon_{h_{1},h_{1}} (X,Y) = 2\sum\nolimits_{a,b} g(X, h_1 (E_a, E_b) )\,g(Y, h_1 (E_a, E_b) ),\\
 && \Upsilon_{h_{2},h_{2}} (X,Y) = 2\sum\nolimits_{i,j} g(X, h_2 ( {\cal E}_i , {\cal E}_j ) )\,g(Y, h_2 ( {\cal E}_i , {\cal E}_j ) ).
\end{eqnarray*}
Hence,
 $( \Upsilon_{h_{1},h_{1}} + \Upsilon_{h_{2},h_{2}} - \Upsilon_{h_{12},h_{12}} )(X,Y) =
 -4\sum\nolimits_{\,a,i} g( h_{12}({\cal E}_i , E_a ) , X)g( h_{12} ( {\cal E}_i , E_a ) , Y)$.
Similarly,
\begin{eqnarray*}
&& ( \Upsilon_{T_{1},T_{1}} + \Upsilon_{T_{2},T_{2}} - \Upsilon_{T_{12},T_{12}} )(X,Y)
 = - 4\sum\nolimits_{a,i} g( T_{12} ( {\cal E}_i , E_a ) ,X) g( T_{12} ( {\cal E}_i , E_a ) , Y),
\end{eqnarray*}
and
\begin{eqnarray*}
&& -(\Div {\tilde h}_1 ) (X,Y) =
- \sum\nolimits_{\,k} g(  \nabla_{\xi_k} ( P_1 {\tilde h}_{12}(X,Y) ) , \xi_k ) \\
&&\quad +\sum\nolimits_{\,a} \big( g( {\tilde h}_{1}( P_1^\perp \nabla_{E_a} X, Y ) , E_a )
 + g( {\tilde h}_{1}( P_1^\perp \nabla_{E_a} Y, X ) , E_a ) \big),\\
 && -(\Div {\tilde h}_2 ) (X,Y) =
 -\sum\nolimits_{\,k} g(  \nabla_{\xi_k} ( P_2 {\tilde h}_{12}(X,Y) ) , \xi_k ) \\
&&\quad +\sum\nolimits_{\,i} \big( g( {\tilde h}_{2}( P_2^\perp \nabla_{ {\cal E}_i } X, Y ) ,  {\cal E}_i )
 + g( {\tilde h}_{2}( P_2^\perp \nabla_{ {\cal E}_i } Y, X ) ,  {\cal E}_i ) \big),\\
 && ( \Div {\tilde h}_{12} )(X,Y)
 =
 \sum\nolimits_{\,k} g(  \nabla_{\xi_k} ( (P_{12} {\tilde h}_{12}(X,Y) ) , \xi_k )
 - \sum\nolimits_{\,i} g( {\tilde h}_{2}( P_3 \nabla_{ {\cal E}_i } X, Y ) ,  {\cal E}_i ) \\
&&\quad
 - \sum\nolimits_{\,i} g( {\tilde h}_{2}( P_3 \nabla_{ {\cal E}_i } Y, X ) ,  {\cal E}_i )
 - \sum\nolimits_{\,a} g( {\tilde h}_{1}( P_3 \nabla_{ E_a } X, Y ) ,  E_a )
 - \sum\nolimits_{\,a} g( {\tilde h}_{1}( P_3 \nabla_{ E_a } Y, X ) ,  E_a ).
\end{eqnarray*}
Hence,
\begin{eqnarray*}
 &&  (\Div {\tilde h}_{12}  - \Div {\tilde h}_1 - \Div {\tilde h}_2 )(X,Y)
 =\sum\nolimits_{\,i} g( {\tilde h}_{2}( P_1 \nabla_{ {\cal E}_i } X, Y ) ,  {\cal E}_i )
  +\sum\nolimits_{\,i} g( {\tilde h}_{2}( P_1 \nabla_{ {\cal E}_i } Y, X ) ,  {\cal E}_i )\\
&&\quad +\sum\nolimits_{\,a} g( {\tilde h}_{1}( P_2 \nabla_{ E_a } X, Y ) ,  E_a )
 +\sum\nolimits_{\,a} g( {\tilde h}_{1}( P_2 \nabla_{ E_a } Y, X ) ,  E_a ).
\end{eqnarray*}
We have
 \\
\begin{eqnarray*}
 && - \tilde{\cal T}_{12}^\flat (X,Y) = -\sum\nolimits_{\,a} g( \tT_{12,a} \tT_{12,a} X,Y) -\sum\nolimits_{\,i} g( \tT_{12,i} \tT_{12,i} X,Y) \\
&&\quad =\sum\nolimits_{a,\mu} g( T_3 (X,  e_\mu ) , E_a )\,g( E_a, T_3( Y , e_\mu ) )
 +\sum\nolimits_{i,\mu} g( T_3 (X,  e_\mu ) , {\cal E}_i )\,g( {\cal E}_i , T_3( Y , e_\mu ) ) ,\\
 && \tilde{\cal T}_{1}^\flat (X,Y) = \sum\nolimits_{\,a} g( \tT_{1,a} \tT_{1,a} X,Y) \\
 &&\quad =-\sum\nolimits_{a,\mu} g( T_3 ( X,  e_\mu ) , E_a )\,g( E_a , T_3 ( Y , e_\mu ) )
- \sum\nolimits_{a,i} g( {\tilde T}_1 ( X,  {\cal E}_i ) , E_a )\,g( E_a , {\tilde T}_{1} ( Y , {\cal E}_i ) ), \\
 && \tilde{\cal T}_{2}^\flat (X,Y) = -\sum\nolimits_{i,\mu} g( T_3 ( X,  e_\mu ) , {\cal E}_i )\,g( {\cal E}_i , T_3 ( Y , e_\mu ) )
 -\sum\nolimits_{a,i} g( {\tilde T}_2 ( X,  E_a ) , {\cal E}_i )\,g( {\cal E}_i , {\tilde T}_{2} ( Y , E_a ) ) .
\end{eqnarray*}
Hence,
\begin{eqnarray*}
&&\quad (\tilde{\cal T}_{1}^\flat + \tilde{\cal T}_{2}^\flat -\tilde{\cal T}_{12}^\flat)(X,Y) =  \\
&& -\sum\nolimits_{a,i} \big( g( {\tilde T}_1 ( X,  {\cal E}_i ) , E_a )\,g( E_a , {\tilde T}_{1} ( Y , {\cal E}_i ) )
 + g( {\tilde T}_2 ( X,  E_a ) , {\cal E}_i )\,g( {\cal E}_i , {\tilde T}_{2} ( Y , E_a ) ) \big).
\end{eqnarray*}
We also have
$\Div \big( H_{23} - H_1 + H_{13} - H_2 - {\tilde H}_{12} + H_{12} \big)
 = 0 $.

By the above, the Euler-Lagrange equation for variations of metric satisfying
 $B(X,Y) = B(P_3 X , P_3 Y)$
is the following, for all $X,Y \in \mD_3$:
\begin{eqnarray}
\label{EL33}
&& \sum\nolimits_a g( {\tilde h}_{1}( P_2 \nabla_{ E_a } X, Y ) ,  E_a )
 +\sum\nolimits_i \big( g( {\tilde h}_{2}( P_1 \nabla_{ {\cal E}_i } X, Y ) ,  {\cal E}_i )
 +g( {\tilde h}_{2}( P_1 \nabla_{ {\cal E}_i } Y, X ) ,  {\cal E}_i ) \nonumber\\
&& +\, g( {\tilde h}_{1}( P_2 \nabla_{ E_a } Y, X ) ,  E_a )\big) +2\,{\rm Sym}(P_3 H_2 \otimes P_3 H_1) (X,Y)
 \nonumber\\
&& +\sum\nolimits_{a,i} \big( g( {\tilde h}_1 ( X , {\cal E}_i ) , E_a )\,g(E_a , {\tilde T}_{1} ( Y , {\cal E}_i) )
 + g( {\tilde h}_1 ( Y , {\cal E}_i ) , E_a )\,g(E_a , {\tilde T}_{1} ( X , {\cal E}_i) ) \nonumber\\
&& +\,g( {\tilde h}_2 ( X , E_a ) , {\cal E}_i )\,g( {\cal E}_i , {\tilde T}_{2} ( Y , E_a ) )
+ g( {\tilde h}_2 ( Y , E_a ) , {\cal E}_i )\,g( {\cal E}_i , {\tilde T}_{2} ( X , E_a ) ) \nonumber\\
&& -\,2 g(X, h_{12} ( {\cal E}_i , E_a ) )\,g(Y, h_{12} ( {\cal E}_i , E_a ) )
 - 2 g(X, T_{12} ( {\cal E}_i , E_a ) )\,g(Y, T_{12} ( {\cal E}_i , E_a ) ) \nonumber\\
&& -\,2 g( {\tilde T}_1 ( X,  {\cal E}_i ) , E_a )\,g( E_a , {\tilde T}_{1} ( Y , {\cal E}_i ) )
 -2 g( {\tilde T}_2 ( X,  E_a ) , {\cal E}_i )\, g( {\cal E}_i , {\tilde T}_{2} ( Y , E_a ) )\big)  \nonumber\\
&& + S_{\,\mD_1,\mD_2}\cdot g(X,Y)
 = 0.
\end{eqnarray}
We have
\begin{eqnarray*}
&& \sum\nolimits_{\,a} g( {\tilde h}_1 ( P_2 \nabla_{E_a} X, Y ) , E_a )
= \sum\nolimits_{\,a} g( \mh_{23} ( P_2 \nabla_{E_a} X, Y ) , E_a ) \\
&&= \sum\nolimits_{\,a,i} g( \mA_{23, a} Y , {\cal E}_i )\,g( {\cal E}_i , P_2 \nabla_{E_a} X )
 = - \sum\nolimits_{\,a,i} g( \mA_{23, a} Y , {\cal E}_i )\,g( \nabla_{E_a} {\cal E}_i , X ) \\
&&= - \sum\nolimits_{\,a,i} g( \mA_{23, a} {\cal E}_i , Y )\,g( \mh_{12} ( {E_a} , {\cal E}_i ) + \mT_{12} ( {E_a} , {\cal E}_i ) , X ) ,\\
&& \sum\nolimits_{\,i} g( {\tilde h}_2 ( P_1 \nabla_{ {\cal E}_i } X, Y ) , {\cal E}_i ) =
- \sum\nolimits_{\,a,i} g( \mA_{13, i} E_a , Y )\,g( \mh_{12} ( E_a , {\cal E}_i ) - \mT_{12} ( E_a , {\cal E}_i ) , X ) , \\
 && \sum\nolimits_{\,a,i} g( {\tilde h}_1 ( X , {\cal E}_i ) , E_a )\,g(E_a , {\tilde T}_{1} ( Y , {\cal E}_i) ) =
\sum\nolimits_{\,a,i} g( \mh_{23} ( X , {\cal E}_i ) , E_a )\,g(E_a , \mT_{23} ( Y , {\cal E}_i) ) \\
&&= \sum\nolimits_{\,a,i} g( \mA_{23,a} X , {\cal E}_i )\,g(\mTs_{23,a} Y , {\cal E}_i )
= - \sum\nolimits_{\,a,i} g( \mA_{23,a} {\cal E}_i , X )\,g(\mTs_{23,a} {\cal E}_i , Y ) ,\\
 && \sum\nolimits_{\,a,i} g( {\tilde h}_2 ( X , E_a ) , {\cal E}_i )\,g( {\cal E}_i , {\tilde T}_{2} ( Y , E_a ) ) =
\sum\nolimits_{\,a,i} g( \mh_{13} ( X , E_a ) , {\cal E}_i )\,g( {\cal E}_i , \mT_{13} ( Y , E_a ) ) \\
&&= \sum\nolimits_{\,a,i} g( \mA_{13, i} X , E_a )\,g( \mTs_{13,i} Y , E_a )
 = -\sum\nolimits_{\,a,i} g( \mA_{13, i} E_a , X )\,g( \mTs_{13,i} E_a , Y ) ,\\
 && \sum\nolimits_{\,a,i} g( {\tilde T}_1 ( X,  {\cal E}_i ) , E_a )\,g( E_a , {\tilde T}_{1} ( Y , {\cal E}_i ) ) =
\sum\nolimits_{\,a,i} g( \mT_{23} ( X,  {\cal E}_i ) , E_a )\,g( E_a , \mT_{23} ( Y , {\cal E}_i ) ) \\
&&= \sum\nolimits_{\,a,i} g( \mTs_{23,a} X,  {\cal E}_i )\,g( \mTs_{23,a} Y , {\cal E}_i )
 = \sum\nolimits_{\,a,i} g( \mTs_{23,a} {\cal E}_i , X )\,g( \mTs_{23,a} {\cal E}_i , Y ) ,\\
 && \sum\nolimits_{\,a,i} g( {\tilde T}_2 ( X,  E_a ) , {\cal E}_i )\,g( {\cal E}_i , {\tilde T}_{2} ( Y , E_a ) ) =
\sum\nolimits_{\,a,i} g( \mTs_{13,i} E_a , X )\,g( \mTs_{13,i} E_a , Y ) .
\end{eqnarray*}
Using the above, we obtain \eqref{EL33withmixed} as an equivalent form of \eqref{EL33}. 
\end{proof}

\subsection{Variations for $B = B |_{ (\mD_1 \times \mD_3) \cup (\mD_3 \times \mD_1) }$}

\begin{proposition}
Let $g_t$ be a variation of $g$ such that
\begin{equation}\label{Eq-prop6}
 B(X,Y) = B(P_1 X, P_3 Y) + B(P_3 X , P_1 Y)\quad (X,Y \in \mathfrak{X}_M).
\end{equation}
Then ${\rm\frac{d}{dt}}\, J_{\mD_1,\mD_2}(g_t) |_{\,t=0}=0$ if and only if the following Euler-Lagrange equation holds for all $X \in \mD_1$, $Y \in \mD_3$:
\begin{eqnarray} \label{EL13withmixed}
&& 2\sum\nolimits_{a,i} g(Y , \mA_{23 , E_a} {\cal E}_i )\,g(X,  \T_{1 ,{\cal E}_i }  E_a )
 +\,2\,g( T^\sharp_{1, Y}  X , P_1 (H_2 + H_3) 
 ) + (\Div A_{1,Y}) X - (\Div \tT_{1,X}) Y \nonumber \\
&& +\sum\nolimits_{a,i} g(Y , \mA_{23 , E_a} {\cal E}_i )\,g(X,  {A}_{1 ,{\cal E}_i }  E_a )
 +\,g( {\tilde T}^\sharp_{1,X} Y - {\tilde A}_{1, X} Y , H_1) + g(H_1 , Y)\,g( P_1 (H_2 + H_3)  
 , X)
 \nonumber \\
&&
 - g( \nabla_{X} H_1 , Y)
 -(\Div {\tilde h}_2) (X,Y)  + \sum\nolimits_{a,i} g(Y , \mTs_{23 , E_a} {\cal E}_i )\,g(X,  \T_{1 ,{\cal E}_i }  E_a )
  \nonumber \\
&& - \sum\nolimits_{a,i} g( A_{1, {\cal E}_i } E_a , X )\,g( \mTs_{13 , {\cal E}_i } E_a , Y )
- \sum\nolimits_{i,\mu} g( \mA_{13 , {\cal E}_i } X , e_\mu )\,g( \T_{3 , {\cal E}_i } e_\mu , Y ) \nonumber\\
&& - \sum\nolimits_{a,i} g( \mA_{13 , {\cal E}_i} E_a , Y )\,g( \T_{1, {\cal E}_i } E_a , X )
- \sum\nolimits_{i,\mu} g( A_{3, {\cal E}_i } e_\mu , Y )\,g( \mTs_{13 , {\cal E}_i} e_\mu , X ) \nonumber \\
%
&&  -\,g(H_2 , X)\,g(H_2 ,Y)
 +\sum\nolimits_{i,j} g(X, h_2 ( {\cal E}_i , {\cal E}_j ) )\,g(Y, h_2 ( {\cal E}_i , {\cal E}_j ) ) \nonumber\\
&& +\sum\nolimits_{i,j} g(X, T_2 ( {\cal E}_i , {\cal E}_j ) )\,g(Y, T_2 ( {\cal E}_i , {\cal E}_j ) )
 -2\sum\nolimits_{i,\mu} g( \mTs_{13 , {\cal E}_i } e_\mu , X )\,g( \T_{3, {\cal E}_i} e_\mu , Y ) \nonumber\\
&&
- 2 \sum\nolimits_{a,i} g( \T_{1 , {\cal E}_i } E_a , X )\,g( \mTs_{13 , {\cal E}_i} E_a , Y )
%
-2 \sum\nolimits_{i,\mu} g( A_{3 , {\cal E}_i } e_\mu , Y )\,g( \mTs_{12 , e_\mu } {\cal E}_i , X ) \nonumber\\
&& -\,2\,g(Y, {\tilde T}_3 (X, H_3) ) - (\Div A_{12,Y}) X + (\Div \tT_{12,X} ) Y
%
%
 - \sum\nolimits_{i,\mu} g( A_{3 , {\cal E}_i } e_\mu , Y )\,g( \mA_{12 , e_\mu } {\cal E}_i , X ) \nonumber\\
&& -\,g( T_3 ( Y,  P_3 (H_1 + H_2) 
 ) , X) + g( h_3 ( Y ,  P_3 (H_1 + H_2) 
 ) , X) \nonumber\\
&& -g( P_3 (H_1 + H_2) 
 , Y)\,g(H_3 , X) + g( \nabla_{X} (P_3 (H_1 + H_2)) 
 , Y ) \nonumber\\
&& 
 - \sum\nolimits_{i,\mu} g(  \T_{3, {\cal E}_i } e_\mu , Y )\,g( \mTs_{12, e_\mu } {\cal E}_i , X ) = 0.
\end{eqnarray}
\end{proposition}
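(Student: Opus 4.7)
The proof proceeds along the same template as the proofs of the previous two propositions. Since $B$ satisfies \eqref{Eq-prop6}, the pairing $\langle\delta J_{\mD_1,\mD_2}, B\rangle$ only picks up the components of the integrand tensors in Proposition~\ref{propdtS1S2general} whose $(0,2)$-part has a nontrivial $(\mD_1\times\mD_3)\cup(\mD_3\times\mD_1)$-block. The integrals $I_2$, $I_4$, $I_6$ in \eqref{dtS1S2general} have integrands supported respectively on $\mD_1\times\mD_1$ (through $g_1^\top$ together with $\mathcal K_1^\flat$, $H_{23}^\flat\otimes H_{23}^\flat$, $\Upsilon_{\tilde h_1,\tilde h_1}$, $\Upsilon_{\tilde T_1,\tilde T_1}$, $\mathcal T_1^\flat$ and $\Div h_1|_{\mD_1\times\mD_1}$), on $\mD_2\times\mD_2$ and on $\mD_{12}\times\mD_{12}=\mD_3^\perp\times\mD_3^\perp$; so all three annihilate $B$ and drop out entirely. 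Only $I_1$, $I_3$ and $I_5$ survive.

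For $X\in\mD_1$ and $Y\in\mD_3$, I evaluate the surviving integrands one tensor at a time in the adapted frame $\{E_a,\mathcal E_i,e_\mu\}$. In $I_3$ all truly cross terms vanish because $X,Y$ both lie in $\mD_2^\perp$, so what remains is exactly $-\Div\tilde h_2|_{\mD_2^\perp\times\mD_2^\perp}(X,Y)$, $-\widetilde{\mathcal K}_2^\flat(X,Y)$, $-H_2^\flat\otimes H_2^\flat(X,Y)$, $\tfrac{1}{2}\Upsilon_{h_2,h_2}(X,Y)$, $\tfrac{1}{2}\Upsilon_{T_2,T_2}(X,Y)$ and $2\tilde{\mathcal T}_2^\flat(X,Y)$, which unfold into the $h_2$-, $T_2$- and $H_2$-summands of \eqref{EL13withmixed} through the usual identifications $g(A_{2,Z}X,Y)=g(h_2(X,Y),Z)$ and $g(\tilde T_{2,Z}^\sharp X,Y)=g(\tilde T_2(X,Y),Z)$. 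In $I_1$ and $I_5$, the surviving pieces are the cross blocks of $\Upsilon_{\tilde\alpha_m,\theta_m}$, $\Upsilon_{\alpha_m,\tilde\alpha_m+\tilde\theta_m}$, $\Upsilon_{\tilde\theta_m,\theta_m-\alpha_m}$, $\langle\theta_m,H_{23}\rangle$ resp.\ $\langle\theta_{12},\tilde H_{12}\rangle$, $\langle\tilde\theta_m-\tilde\alpha_m,H_m\rangle$, $\mathrm{Sym}(H_m^\flat\otimes H_{23}^\flat)$, $\tilde\delta_{m,H_m}$ together with $(\Div(\alpha_m-\tilde\theta_m))|_{(\mD_m\times\mD_m^\perp)\cup(\mD_m^\perp\times\mD_m)}$, for $m=1$ and for ``$12$''. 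Using $\theta_1(X,Y)=\tfrac12\,\Ts_{1,Y}X$, $\tilde\theta_1(X,Y)=\tfrac12\,\tT_{1,X}Y$, $\alpha_1(X,Y)=\tfrac12 A_{1,Y}X$, $\tilde\alpha_1(X,Y)=\tfrac12\tA_{1,X}Y$ and their $\mD_{12}$-counterparts, together with the frame identities $P_1\nabla_{e_\mu}\mathcal E_i=\mh_{23}(\mathcal E_i,e_\mu)-\mT_{23}(\mathcal E_i,e_\mu)$, $P_3\nabla_{\mathcal E_i}E_a=\mh_{12}(E_a,\mathcal E_i)+\mT_{12}(E_a,\mathcal E_i)$ and their analogues, each off-diagonal block identifies with a mixed-tensor expression in $\mh_{ij}$, $\mT_{ij}$, $\mA_{ij}$, $\mTs_{ij}$. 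The divergences $\Div A_{1,Y}$, $\Div A_{12,Y}$, $\Div\tT_{1,X}$, $\Div\tT_{12,X}$ and $\Div\tilde h_2$ in \eqref{EL13withmixed} come from the cross-restricted $\Div(\alpha_m-\tilde\theta_m)$-pieces (via the standard rewriting used in \cite{rz-2}), while $-g(\nabla_X H_1,Y)$ and $g(\nabla_X(P_3 H_1+P_3 H_2),Y)$ arise from the $\tilde\delta$-contributions.

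The main obstacle is the bookkeeping rather than any conceptual difficulty: the asymmetric $\mD_1\times\mD_3$ pairing forces one to keep track of both the $(X,Y)=(E_a,e_\mu)$ and $(X,Y)=(e_\mu,E_a)$ halves of every symmetrised tensor ($\theta$, $\tilde\theta$, $\alpha$, $\tilde\alpha$, $\mathrm{Sym}$), and to verify that all the resulting asymmetric intermediate expressions recombine into the explicit form of \eqref{EL13withmixed}. Particular care is needed with the overall sign coming from $2S_{\,\mD_1,\mD_2}=S_{\,\mD_1,\mD_1^\perp}+S_{\,\mD_2,\mD_2^\perp}-S_{\,\mD_{12},\mD_{12}^\perp}$, which reverses the signs of all $I_5$-contributions relative to their $I_1$-analogues, and with the systematic replacement of covariant derivatives like $P_1\nabla_{e_\mu}\mathcal E_i$ by mixed-tensor combinations involving both $\mh_{ij}$ and $\mT_{ij}$. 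Once the accounting is carried through, the $g(X,Y)=0$-multiples (from the $g_m^\perp$- and $g_3^\top$-factors) drop out automatically and the remaining sum is \eqref{EL13withmixed}.
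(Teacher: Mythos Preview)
Your proposal is correct and follows essentially the same approach as the paper: you correctly identify that only the cross-block parts $\hat I_1,\hat I_3,\hat I_5$ of $I_1,I_3,I_5$ survive (while $I_2,I_4,I_6$ vanish on the $\mD_1\times\mD_3$ block), then evaluate each surviving tensor in the adapted frame and rewrite via the identities $P_1\nabla_{e_\mu}\mathcal E_i=\mh_{23}(\mathcal E_i,e_\mu)-\mT_{23}(\mathcal E_i,e_\mu)$, etc., exactly as the paper does. The paper records an intermediate form \eqref{EL13} before the mixed-tensor rewrite, but the substance of the argument is identical.
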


\begin{proof}
For $B(X,Y)=B(P_1 X , P_3 Y) + B(P_3 X , P_1 Y)$, we obtain from \eqref{dtS1S2general}
the following terms:
\begin{eqnarray*}
\nonumber
&& {\rm\frac{d}{dt}}\int_M(S_{\,\mD_1,\mD_1^\perp(t)} +S_{\,\mD_2,\mD_2^\perp(t)} -S_{\,\mD_3(t),\mD_3^\perp})\,{\rm d}\vol_{g_t}|_{\,t=0}
 = \hat I_1 + \hat I_3 + \hat I_5\,,
 \end{eqnarray*}
 where
 \begin{eqnarray*}
&& \hat I_1 = \int_{\Omega}
\big\<4\Upsilon_{{\tilde\alpha}_1, \theta_1}
 +4\,\<\theta_1,\, H_{23} \>  +\,2 ( \Div(\alpha_1 -\tilde\theta_1) )_{ \,| { (\mD_1 \times \mD_1^\perp ) \cup (\mD_1^\perp \times \mD_1) } }
  +2\Upsilon_{\alpha_1, {\tilde\alpha}_1 +{\tilde \theta}_1}
  \nonumber \\
&& +\,2\<{\tilde \theta}_1
- {\tilde\alpha}_1, H_1 \> +2\,{\rm Sym}(H_1^{\flat} \otimes H_{23}^{\flat})
 -2\,{\tilde \delta}_{1,H_1} +2\Upsilon_{{\tilde \theta}_1, \theta_1 - \alpha_1},\ B\big\>\,{\rm d}\vol_g \,,\nonumber \\
&& \hat I_5 = \int_{\Omega}
\big\< - 4\Upsilon_{{\tilde\alpha}_{12}, \theta_{12}}
 -4\,\<\theta_{12},\, {\tilde H}_{12} \>  - \,2 ( \Div(\alpha_{12} -\tilde\theta_{12}) )_{ \,| ({\mD_{12} \times \mD_{12}^\perp ) \cup ( \mD_{12}^\perp \times \mD_{12} ) } }
 - 2 \Upsilon_{\alpha_{12}, {\tilde\alpha}_{12} + {\tilde \theta}_{12}} \nonumber \\
&&  -\,2\<{\tilde \theta}_{12} - {\tilde\alpha}_{12}, H_{12} \> - 2\,{\rm Sym}(H_{12}^{\flat} \otimes {\tilde H}_{12}^{\flat})
 +2\,{\tilde \delta}_{12, H_{12}} -2\Upsilon_{{\tilde \theta}_{12}, \theta_{12} - \alpha_{12}} ,\ B\big\>\,{\rm d}\vol_g, \\
 && \hat I_3 = \int_{\Omega}\big\< -\Div{\tilde h_2} |_{\mD_{2}^\perp \times \mD_{2}^\perp} -\widetilde{\cal K}_2^\flat
 - H_2^\flat \otimes H_2^\flat
 +\frac{1}{2} \Upsilon_{h_2,h_2} + \frac{1}{2} \Upsilon_{T_2,T_2} +2\,\tilde{\cal T}_2^\flat,\ B\big\>\,{\rm d}\vol_g \,
\end{eqnarray*}
are the only 
terms of \eqref{I1}, \eqref{I3} and \eqref{I5}, which may be non-zero.

In what follows, let $X \in \mD_1$ and $Y \in \mD_3$, then we get
\begin{eqnarray*}
&& 2\Upsilon_{{\tilde\alpha}_1 , {\alpha}_1}(X,Y)
 = \sum\nolimits_{a,i}  g( E_a , {\tilde h}_1( {\cal E}_i , Y) )\,g( {\cal E}_i , h_1(E_a , X) )
 \\ &&
 +\sum\nolimits_{a,\mu}  g( E_a , {\tilde h}_1( e_\mu , Y) )\,g(  e_\mu , h_1(E_a , X) ) , \\
 && 2\Upsilon_{{\tilde\alpha}_1, \theta_1} = \sum\nolimits_{a,i} g(Y , {\tilde A}_{1 , E_a} {\cal E}_i )\,g(X,  {T}_{1 ,{\cal E}_i }  E_a )
 +\sum\nolimits_{a,\mu} g(Y ,{\tilde A}_{ 1 , E_a} e_\mu )\,g( X,  T_{1 , e_\mu} E_a  ),\\
 && 2\Upsilon_{ {\tilde \alpha}_1 , {\alpha}_1  }(X,Y) =
\sum\nolimits_{a,i} g(Y , {\tilde A}_{1 , E_a} {\cal E}_i )\,g(X,  {A}_{1 ,{\cal E}_i }  E_a )
 +\sum\nolimits_{a,\mu} g(Y ,{\tilde A}_{ 1 , E_a} e_\mu )\,g( X,  A_{1 , e_\mu} E_a  ),\\
 && 2\Upsilon_{ {\tilde \theta}_1 , {\theta}_1  }(X,Y) =
\sum\nolimits_{a,i} g(Y , {\tT}_{1 , E_a} {\cal E}_i )\,g(X,  \T_{1 ,{\cal E}_i }  E_a )
 +\sum\nolimits_{a,\mu} g(Y ,{\tT}_{ 1 , E_a} e_\mu )\,g( X,  \T_{1 , e_\mu} E_a  ),
\end{eqnarray*}
\begin{eqnarray*}
 && 2\, {\tilde \delta}_{1,H_1} (X,Y) = g( \nabla_{X} H_1 , Y), \\
&& 2\,\<\theta_1,\, H_{23} \> (X,Y) = g( T^\sharp_{1, Y}  X , H_{23} ),\\
 && 2\<{\tilde \theta}_1 - {\tilde\alpha}_1, H_1 \> (X,Y) = g( {\tilde T}^\sharp_{1,X} Y - {\tilde A}_{1, X} Y , H_1),\\
 && 2\,{\rm Sym}(H_1^{\flat} \otimes H_{23}^{\flat})(X,Y) = g(H_1 , Y)\, g( H_{23} , X), \\
 && -\widetilde{\cal K}_2^\flat (X,Y) =  \sum\nolimits_{a,i} g( {\tilde h}_2 ( X , E_a ) , {\cal E}_i )\,g( {\cal E}_i , {\tilde T}_{2} ( Y , E_a ) )
 +\sum\nolimits_{i,\mu} g( {\tilde h}_2 ( X , e_\mu ) , {\cal E}_i )\,g( {\cal E}_i , {\tilde T}_{2} ( Y , e_\mu ) ) \\
&& + \sum\nolimits_{a,i} g( {\tilde h}_2 ( Y , E_a ) , {\cal E}_i )\,g( {\cal E}_i , {\tilde T}_{2} ( X , E_a ) )
 +\sum\nolimits_{i,\mu} g( {\tilde h}_2 ( Y , e_\mu ) , {\cal E}_i )\,g( {\cal E}_i , {\tilde T}_{2} ( X , e_\mu ) ) , \\
 && (H_2^\flat \otimes H_2^\flat) (X,Y) = g(H_2 , X)\,g(H_2 ,Y),\\
 && \frac{1}{2} \Upsilon_{h_{2},h_{2}} (X,Y) = \sum\nolimits_{i,j} g(X, h_2 ( {\cal E}_i , {\cal E}_j ) )\,g(Y, h_2 ( {\cal E}_i , {\cal E}_j ) ) , \\
 && \Upsilon_{T_{2},T_{2}} (X,Y) = 2\sum\nolimits_{\,i,j} g(X, T_2 ( {\cal E}_i , {\cal E}_j ) )\,g(Y, T_2 ( {\cal E}_i , {\cal E}_j ) ),\\
 && \tilde{\cal T}_{2}^\flat (X,Y) = \sum\nolimits_{\,i} g( \tT_{2,i} \tT_{2,i} X,Y) \\
&& = -\sum\nolimits_{i,\mu} g( {\tilde T}_2 (X,  e_\mu ) , {\cal E}_i )\,g({\cal E}_i , T_3( Y , e_\mu) )
 -\sum\nolimits_{a,i} g( T_1 ( X,  E_a ) , {\cal E}_i )\,g( {\cal E}_i , {\tilde T}_{2} ( Y , E_a ) ) , \\
 && 2\< \theta_{12} , {\tilde H}_{12} \> (X,Y) = g(H_3 , \T_{12, Y} X ) = g( Y , T_{12} (X, H_3) ) = g(Y, {\tilde T}_3 (X, H_3) ),\\
 && 2\,{\rm Sym} (H_{12} \otimes {\tilde H}_{12})(X,Y) = g( H_{12} , Y)\,g(H_3 , X),\\
 && 2\, {\tilde \delta}_{12, H_{12}}(X,Y) = g( \nabla_{X} H_{12}  , Y ),\\
 && -2\, \< {\tilde \theta} {-} {\tilde \alpha} , H_{12} \>(X,Y) = - g( H_{12} , \tT_{12 , X} Y {-} \tA_{12, X} Y )
  = - g( T_3 ( Y,  H_{12} ) , X) + g( h_3 ( Y , H_{12} ) , X).
\end{eqnarray*}
Similarly,
\begin{eqnarray*}
 && - 2\Upsilon_{{\tilde\alpha}_{12}, \theta_{12}}(X,Y) = -\sum\nolimits_{a,\mu} g(Y , \tA_{12,a} e_\mu )\,g(X, \T_{12,\mu} E_a)
 - \sum\nolimits_{i,\mu} g(Y , \tA_{12,i} e_\mu )\,g(X, \T_{12,\mu} {\cal E}_i ) \\
&&\quad = -\sum\nolimits_{a,\mu} g(  {\tilde h}_1 (e_\mu , Y) , E_a )\,g( {\tilde T}_3 (E_a , X) , e_\mu  )
 -\sum\nolimits_{i,\mu} g(  {\tilde h}_2 (e_\mu , Y) , {\cal E}_i )\,g( {\tilde T}_3 ({\cal E}_i , X) , e_\mu  ),\\
&& -2\Upsilon_{{\tilde\alpha}_{12}, \alpha_{12}} (X,Y) = - \sum\nolimits_{a,\mu} g(Y , \tA_{12,a} e_\mu )\,g(X, A_{12,\mu} E_a) - \sum\nolimits_{i,\mu} g(Y , \tA_{12,i} e_\mu )\,g(X, A_{12,\mu} {\cal E}_i ) \\
&&\quad = -\sum\nolimits_{a,\mu} g(  {\tilde h}_1 (e_\mu , Y) , E_a )\,g( {\tilde h}_3 (E_a , X) , e_\mu  )
- \sum\nolimits_{i,\mu} g(  {\tilde h}_2 (e_\mu , Y) , {\cal E}_i )\,g( {\tilde h}_3 ({\cal E}_i , X) , e_\mu  ) \\
&&\quad = -\sum\nolimits_{a,\mu} g(  h_3 (e_\mu , Y) , E_a )\,g( h_1 (E_a , X) , e_\mu  )
- \sum\nolimits_{i,\mu} g(  h_3 (e_\mu , Y) , {\cal E}_i )\,g( {\tilde h}_3 ({\cal E}_i , X) , e_\mu  ),\\
 && - 2\Upsilon_{{\tilde \theta}_{12}, \theta_{12}} (X,Y) = - \sum\nolimits_{a,\mu} g(Y , \tT_{12,a} e_\mu )\,g(X, \T_{12,\mu} E_a) - \sum\nolimits_{i,\mu} g(Y , \tT_{12,i} e_\mu )\,g(X, \T_{12,\mu} {\cal E}_i ) \\
&&\quad = -\sum\nolimits_{a,\mu} g( T_3 (e_\mu , Y) , E_a )\,g( T_1 (E_a , X) , e_\mu  )
- \sum\nolimits_{i,\mu} g(  T_3 (e_\mu , Y) , {\cal E}_i )\,g( {\tilde T}_3 ({\cal E}_i , X) , e_\mu  ).
\end{eqnarray*}
We also have
\begin{eqnarray*}
-2 \Div (\alpha_{12} - {\tilde \theta}_{12}) (X,Y) \eq - (\Div A_{12,Y}) X + (\Div \tT_{12,X} ) Y  ,\\
 2\Div( \alpha_1 - {\tilde \theta}_1) (X,Y) \eq (\Div A_{1,Y}) X - (\Div \tT_{1,X}) Y.
\end{eqnarray*}
Thus, the Euler-Lagrange equation for variations of metric satisfying \eqref{Eq-prop6}
is the following, for all $X \in \mD_1$ and $Y \in \mD_3$:
\begin{eqnarray}
\label{EL13}
&& 2\sum\nolimits_{a,i} g(Y , {\tilde A}_{1 , E_a} {\cal E}_i )\,g(X,  \T_{1 ,{\cal E}_i }  E_a )
 + 2\sum\nolimits_{a,\mu} g(Y ,{\tilde A}_{ 1 , E_a} e_\mu )\,g( X,  \T_{1 , e_\mu} E_a  ) \nonumber \\
&& +\,2\,g( T^\sharp_{1, Y}  X , H_{23} ) + (\Div A_{1,Y}) X - (\Div \tT_{1,X}) Y \nonumber \\
&& +\sum\nolimits_{a,i} g(Y , {\tilde A}_{1 , E_a} {\cal E}_i )\,g(X,  {A}_{1 ,{\cal E}_i }  E_a )
 +\sum\nolimits_{a,\mu} g(Y ,{\tilde A}_{ 1 , E_a} e_\mu )\,g( X,  A_{1 , e_\mu} E_a  ) \nonumber \\
&& +\,g( {\tilde T}^\sharp_{1,X} Y - {\tilde A}_{1, X} Y , H_1) + g(H_1 , Y)\,g( H_{23} , X) - g( \nabla_{X} H_1 , Y) \nonumber \\
&& + \sum\nolimits_{a,i} g(Y , {\tT}_{1 , E_a} {\cal E}_i )\,g(X,  \T_{1 ,{\cal E}_i }  E_a ) +  \sum\nolimits_{a,\mu} g(Y ,{\tT}_{ 1 , E_a} e_\mu )\,g( X,  \T_{1 , e_\mu} E_a  )
-(\Div {\tilde h}_2) (X,Y)  \nonumber \\
&& + \sum\nolimits_{a,i} g( {\tilde h}_2 ( X , E_a ) , {\cal E}_i )\,g( {\cal E}_i , {\tilde T}_{2} ( Y , E_a ) )
+  \sum\nolimits_{i,\mu} g( {\tilde h}_2 ( X , e_\mu ) , {\cal E}_i )\,g( {\cal E}_i , {\tilde T}_{2} ( Y , e_\mu ) ) \nonumber\\
&&
+ \sum\nolimits_{a,i} g( {\tilde h}_2 ( Y , E_a ) , {\cal E}_i )\,g( {\cal E}_i , {\tilde T}_{2} ( X , E_a ) )
+  \sum\nolimits_{i,\mu} g( {\tilde h}_2 ( Y , e_\mu ) , {\cal E}_i )\,g( {\cal E}_i , {\tilde T}_{2} ( X , e_\mu ) ) \nonumber\\
&&  -\,g(H_2 , X)\,g(H_2 ,Y)
 +\sum\nolimits_{i,j} g(X, h_2 ( {\cal E}_i , {\cal E}_j ) )\,g(Y, h_2 ( {\cal E}_i , {\cal E}_j ) ) \nonumber\\
&& +\sum\nolimits_{i,j} g(X, T_2 ( {\cal E}_i , {\cal E}_j ) )\,g(Y, T_2 ( {\cal E}_i , {\cal E}_j ) ) \nonumber\\
&& -2\sum\nolimits_{i,\mu} g( {\tilde T}_2 (X,  e_\mu ) , {\cal E}_i )\,g({\cal E}_i , T_3 ( Y , e_\mu) )
- 2 \sum\nolimits_{a,i} g( T_1 ( X,  E_a ) , {\cal E}_i )\,g( {\cal E}_i , {\tilde T}_{2} ( Y , E_a ) ) \nonumber\\
&& -2\sum\nolimits_{a,\mu} g(  {\tilde h}_1 (e_\mu , Y) , E_a )\,g( {\tilde T}_3 (E_a , X) , e_\mu  )
-2 \sum\nolimits_{i,\mu} g(  {\tilde h}_2 (e_\mu , Y) , {\cal E}_i )\,g( {\tilde T}_3 ({\cal E}_i , X) , e_\mu  ) \nonumber\\
&& -\,2\,g(Y, {\tilde T}_3 (X, H_3) ) - (\Div A_{12,Y}) X + (\Div \tT_{12,X} ) Y \nonumber\\
&& - \sum\nolimits_{a,\mu} g(  h_3 (e_\mu , Y) , E_a )\,g( h_1 (E_a , X) , e_\mu  )
- \sum\nolimits_{i,\mu} g( h_3 (e_\mu , Y) , {\cal E}_i )\,g( {\tilde h}_3 ({\cal E}_i , X) , e_\mu  ) \nonumber\\
&& -\,g( T_3 ( Y,  H_{12} ) , X) + g( h_3 ( Y , H_{12} ) , X)
-g( H_{12} , Y)\,g(H_3 , X) + g( \nabla_{X} H_{12} , Y ) \nonumber\\
&& - \!\sum\nolimits_{a,\mu} g( T_3 (e_\mu , Y) , E_a )\,g( T_1 (E_a , X) , e_\mu  )
 - \!\sum\nolimits_{i,\mu} g( T_3 (e_\mu , Y) , {\cal E}_i )\,g( {\tilde T}_3 ({\cal E}_i , X) , e_\mu  ) = 0
\end{eqnarray}
For all $X \in \mD_1$ and $Y \in \mD_3$ we obtain
\begin{eqnarray*}
 && 2\sum\nolimits_{a,i} g(Y , {\tilde A}_{1 , E_a} {\cal E}_i )\,g(X,  \T_{1 ,{\cal E}_i }  E_a ) =
2\sum\nolimits_{a,i} g(Y , \mA_{23 , E_a} {\cal E}_i )\,g(X,  \T_{1 ,{\cal E}_i }  E_a ), \\
 && \sum\nolimits_{a,i} g( {\tilde h}_2 ( X , E_a ) , {\cal E}_i )\,g( {\cal E}_i , {\tilde T}_{2} ( Y , E_a ) ) =
\sum\nolimits_{a,i} g( h_1 ( X , E_a ) , {\cal E}_i )\,g( {\cal E}_i , \mT_{13} ( Y , E_a ) ) \\
&&\quad = \sum\nolimits_{a,i} g( A_{1, {\cal E}_i } E_a , X )\,g( \mT_{13 , {\cal E}_i } Y , E_a )
 = - \sum\nolimits_{a,i} g( A_{1, {\cal E}_i } E_a , X )\,g( \mT_{13 , {\cal E}_i } E_a , Y ), \\
 && \sum\nolimits_{i,\mu} g( {\tilde h}_2 ( X , e_\mu ) , {\cal E}_i )\,g( {\cal E}_i , {\tilde T}_{2} ( Y , e_\mu ) ) =
\sum\nolimits_{i,\mu} g( \mh_{13} ( X , e_\mu ) , {\cal E}_i )\,g( {\cal E}_i , T_3 ( Y , e_\mu ) ) \\
&&\quad = \sum\nolimits_{i,\mu} g( \mA_{13 , {\cal E}_i } X , e_\mu )\,g( \T_{3 , {\cal E}_i } Y , e_\mu )
 = - \sum\nolimits_{i,\mu} g( \mA_{13 , {\cal E}_i } X , e_\mu )\,g( \T_{3 , {\cal E}_i } e_\mu , Y ), \\
 && \sum\nolimits_{a,i} g( {\tilde h}_2 ( Y , E_a ) , {\cal E}_i )\,g( {\cal E}_i , {\tilde T}_{2} ( X , E_a ) ) =
\sum\nolimits_{a,i} g( \mh_{13} ( Y , E_a ) , {\cal E}_i )\,g( {\cal E}_i , T_1 ( X , E_a ) ) \\
&&\quad = \sum\nolimits_{a,i} g( \mA_{13 , {\cal E}_i} Y , E_a )\,g( \T_{1, {\cal E}_i } X , E_a )
= - \sum\nolimits_{a,i} g( \mA_{13 , {\cal E}_i} E_a , Y )\,g( \T_{1, {\cal E}_i } E_a , X ),\\
&& \sum\nolimits_{i,\mu} g( {\tilde h}_2 ( Y , e_\mu ) , {\cal E}_i )\,g( {\cal E}_i , {\tilde T}_{2} ( X , e_\mu ) ) =
\sum\nolimits_{i,\mu} g( h_3 ( Y , e_\mu ) , {\cal E}_i )\,g( {\cal E}_i , \mT_{13} ( X , e_\mu ) ) \\
&&\quad = \sum\nolimits_{i,\mu} g( A_{3, {\cal E}_i }  Y , e_\mu  )\,g( \mTs_{13 , {\cal E}_i} X , e_\mu )
= - \sum\nolimits_{i,\mu} g( A_{3, {\cal E}_i } e_\mu , Y )\,g( \mTs_{13 , {\cal E}_i} e_\mu , X ),\\
 && -2\sum\nolimits_{i,\mu} g( {\tilde T}_2 (X,  e_\mu ) , {\cal E}_i )\,g({\cal E}_i , T_3 ( Y , e_\mu) ) =
-2\sum\nolimits_{i,\mu} g( \mT_{13} (X,  e_\mu ) , {\cal E}_i )\,g({\cal E}_i , T_3 ( Y , e_\mu) ) \\
&&\quad = -2\sum\nolimits_{i,\mu} g( \mTs_{13 , {\cal E}_i } X,  e_\mu )\,g( \T_{3, {\cal E}_i} Y , e_\mu )
= -2\sum\nolimits_{i,\mu} g( \mTs_{13 , {\cal E}_i } e_\mu , X )\,g( \T_{3, {\cal E}_i} e_\mu , Y ),\\
&& - 2 \sum\nolimits_{a,i} g( T_1 ( X,  E_a ) , {\cal E}_i )\,g( {\cal E}_i , {\tilde T}_{2} ( Y , E_a ) ) =
- 2 \sum\nolimits_{a,i} g( T_1 ( X,  E_a ) , {\cal E}_i )\,g( {\cal E}_i , \mT_{13} ( Y , E_a ) ) \\
&&\quad = - 2 \sum\nolimits_{a,i} g( \T_{1 , {\cal E}_i } X,  E_a )\,g( \mTs_{13 , {\cal E}_i} Y , E_a )
= - 2 \sum\nolimits_{a,i} g( \T_{1 , {\cal E}_i } E_a , X )\,g( \mTs_{13 , {\cal E}_i} E_a , Y ), \\
&& -2\sum\nolimits_{a,\mu} g( {\tilde h}_1 (e_\mu , Y) , E_a )\,g( {\tilde T}_3 (E_a , X) , e_\mu  ) =
-2\sum\nolimits_{a,\mu} g( h_3 (e_\mu , Y) , E_a )\,g( T_1 (E_a , X) , e_\mu  ) \\
&&\quad  = -2\sum\nolimits_{a,\mu} g(  A_{3 , E_a } e_\mu , Y )\,g( \T_{1 , e_\mu } E_a , X ),
\end{eqnarray*}
\begin{eqnarray*}
&& -2 \sum\nolimits_{i,\mu} g( {\tilde h}_2 (e_\mu , Y) , {\cal E}_i )\,g( {\tilde T}_3 ({\cal E}_i , X) , e_\mu  ) =
-2 \sum\nolimits_{i,\mu} g( h_3 (e_\mu , Y) , {\cal E}_i )\,g( \mT_{12} ({\cal E}_i , X) , e_\mu  ) \\
&&\quad = -2 \sum\nolimits_{i,\mu} g( A_{3 , {\cal E}_i } e_\mu , Y )\,g( \mTs_{12 , e_\mu } {\cal E}_i , X ), \\
&& - \sum\nolimits_{a,\mu} g( h_3 (e_\mu , Y) , E_a )\,g( h_1 (E_a , X) , e_\mu  ) =
- \sum\nolimits_{a,\mu} g( A_{3 , E_a } e_\mu , Y )\,g( A_{1 , e_\mu } E_a , X ),\\
&& - \sum\nolimits_{i,\mu} g( h_3 (e_\mu , Y) , {\cal E}_i )\,g( {\tilde h}_3 ({\cal E}_i , X) , e_\mu  ) =
- \sum\nolimits_{i,\mu} g( A_{3 , {\cal E}_i } e_\mu , Y )\,g( \mh_{12} ({\cal E}_i , X) , e_\mu  ) \\
&&\quad  = - \sum\nolimits_{i,\mu} g( A_{3 , {\cal E}_i } e_\mu , Y )\,g( \mA_{12 , e_\mu } {\cal E}_i , X ), \\
&& - \sum\nolimits_{a,\mu} g( T_3 (e_\mu , Y) , E_a )\,g( T_1 (E_a , X) , e_\mu  ) =
- \sum\nolimits_{a,\mu} g( \T_{3 , E_a } e_\mu , Y )\,g( \T_{1 , e_\mu } E_a , X ), \\
&& - \sum\nolimits_{i,\mu} g( T_3 (e_\mu , Y) , {\cal E}_i )\,g( {\tilde T}_3 ({\cal E}_i , X) , e_\mu  ) =
- \sum\nolimits_{i,\mu} g( \T_{3, {\cal E}_i } e_\mu , Y )\,g( \mT_{12} ({\cal E}_i , X) , e_\mu  ) \\
&&\quad  = -\sum\nolimits_{i,\mu} g( \T_{3, {\cal E}_i } e_\mu , Y )\,g( \mTs_{12, e_\mu } {\cal E}_i , X ).
\end{eqnarray*}
Using the above, we obtain \eqref{EL13withmixed} as an equivalent form of the Euler-Lagrange equation \eqref{EL13}. 
\end{proof}

The Euler-Lagrange equation for variations of metric satisfying
$B(X,Y) = B(P_2 X, P_3 Y) + B(P_3 X , P_2 Y)$ for all $X,Y \in \mathfrak{X}_M$,
is dual to \eqref{EL13withmixed}, i.e., can be obtained from \eqref{EL13withmixed} by interchanging $\mD_1$ and $\mD_2$ (and the corresponding tensors and vectors from their frames).

\section{Particular cases and examples of critical metrics}
\label{sec:examples}

In this section we find examples of critical points of \eqref{Eq-Smix}, in particular with respect to variations of $g$ preserving the volume of a domain $\Omega$. 
As explained in \cite[
Section 2.3]{rz-2}, a metric $g$ is critical with respect to these variations if and only if there exists $\lambda \in \mathbb{R}$ for which
the Euler-Lagrange equations \eqref{EL11withmixed}, its dual (with respect to interchanging $\mD_1$ and $\mD_2$), and \eqref{EL33withmixed} hold with $\lambda \cdot g(X,Y)$ on their right-hand sides, and \eqref{EL13withmixed} and its dual are satisfied.
We also note that a metric $g$ is a critical point of \eqref{Eq-Smix} with respect to \emph{adapted} volume-preserving variations if and only if there exists $\lambda \in \mathbb{R}$ for which \eqref{EL11withmixed}, its dual,  and \eqref{EL33withmixed} hold with $\lambda \cdot g(X,Y)$ on their right-hand sides, see \cite[Remark 4]{rz-2}.
We note that \eqref{EL33withmixed} and \eqref{EL13withmixed} become trivial when $\mD_3=0$, i.e., $\mD=TM$,
in this case, \eqref{EL11withmixed} reduces to the Euler-Lagrange equation given in \cite{rz-1}.

Examples that we consider in the following subsections include
one-dimensional distributions (in Section~\ref{sec:3.1}),
lifts of complementary distributions to twisted products (in Section~\ref{sec:3.2}),
distributions on domains of Riemannian submersions (in Section~\ref{sec:3.3}),
distributions defined by $K$-contact and $f$-$K$-contact structures (in~Section~\ref{sec:3.4}).
As an illustration of further possible applications of this variational framework, we also consider a modified variational problem (in Section~\ref{sec:3.5}), for mutual curvature of a distribution and its varying (to some extent) orthogonal complement.

\subsection{One-dimensional distributions}
\label{sec:3.1}

Let $\mD_1$ and $\mD_2$ be one-dimensional distributions on $(M,g)$, locally spanned by orthogonal unit vector fields $X_1$ and $X_2$,
respectively, and let $K(X_1,X_2)$ be the sectional curvature of the plane field spanned by $X_1$ and $X_2$. 
Then, the Euler-Lagrange equation \eqref{EL11withmixed} for variations satisfying
 $B(X,Y) = B(P_1 X, P_1 Y)$
reduces~to
\begin{eqnarray}\label{EL11dim11}
\nonumber
&& K(X_1, X_2) +\Div( H_{13} - H_2 -2\,P_2 H_1) + 2\,g( {\tilde h}_2 ( P_3 \nabla_{ X_2 } X_1, X_1) , X_2 )
- 2\,g(X_1, H_2)\,g(X_1 , H_3) \\
&& +\,2\sum\nolimits_{\mu} \Big( g( {\tilde h}_2 ( X_1 , e_\mu ) , X_2 )\, g( X_2 , {\tilde T}_{2} ( X_1 , e_\mu ) )
  - g( {\tilde T}_3 (X_1, X_2 ) , e_\mu )\,g( {\tilde h}_3( X_2 , X_1) , e_\mu )
 \nonumber\\
&& +\,g( {\tilde h}_1 ( X_2, e_\mu ) , X_1 )\,g( {\tilde h}_1 ( X_2 , e_\mu ) , X_1 )
 - g( h_{12}( P_2 \nabla_{e_\mu}  X_1 , X_1) , e_\mu ) \nonumber\\
&& +\,g( {\tilde T}_1 ( X_2 , e_\mu ) , X_1 )\,g( {\tilde T}_1 ( X_2 , e_\mu ) , X_1 )
 +g( {\tilde T}_3 ( X_1, X_2 ) , e_\mu )\, g(e_\mu, {\tilde T}_3 ( X_1,  X_2 ) ) \nonumber\\
&& -\,g( {\tilde T}_2 ( X_1,  e_\mu ) , X_2 )\, g( X_2 , {\tilde T}_{2} ( X_1 , e_\mu ) )\Big) = 0.
\end{eqnarray}

Suppose now that all three distributions $\mD_1$, $\mD_2$ and $\mD_3 = (\mD_1 \oplus \mD_2)^\perp$ are pairwise mixed totally geodesic and pairwise mixed integrable.
Note that $g(P_1 H_2 , P_1 H_3)=g(H_2 , H_3)$ and  $g(P_2 H_1 , P_2 H_3)=g(H_1 , H_3)$. 
Then the Euler-Lagrange equations for volume-preserving variations,  \eqref{EL11dim11} and its dual, become respectively:
\begin{eqnarray}\label{EL11dim11d1}
 K(X_1, X_2) -2\,g(H_2 , H_3) +\Div( P_2 H_3 -P_2 H_1 - P_1 H_2 - P_3 H_2 ) = \lambda\,, \\
\label{EL11dim11d2}
 K(X_1, X_2) -2\,g(H_1 , H_3) +\Div( P_1 H_3 -P_1 H_2 - P_2 H_1 - P_3 H_1 ) = \lambda\,,
\end{eqnarray}
and \eqref{EL33} becomes
\begin{eqnarray}\label{EL33withmixeddim11d3}
&& 2\,{\rm Sym}( (P_3 H_2)^\flat\otimes(P_3 H_1)^\flat) = (\lambda - K(X_1, X_2))\,g_3.
\end{eqnarray}

\begin{proposition}
Let all three distributions $\mD_1$, $\mD_2$ and $\mD_3 = (\mD_1 \oplus \mD_2)^\perp$ be pairwise mixed totally geodesic, pairwise mixed integrable, and
$\mD_1$ and $\mD_2$ be one-dimensional and locally spanned by unit vector fields $X_1$ and~$X_2$.

1. If $\dim \mD_3 >1$, then $g$ is critical for
\eqref{Eq-Smix}
with respect to adapted volume-preserving variations
if and only if
$P_3 H_1=0$ or $P_3 H_2=0$ at each point of $M$,
and the following equations hold:
\begin{eqnarray}\label{eqdim1a}
K(X_1,X_2) + \Div (H_3 - P_3(H_1+H_2)) = \lambda\,,\\
\label{eqdim1b}
K(X_1,X_2) -2g(H_1 + H_2 , H_3)
-2\Div( P_2 H_1 + P_1 H_2 ) = \lambda\,.
\end{eqnarray}

2. If $X_1$ and $X_2$ are geodesic vector fields, then $K(X_1,X_2)=0$; moreover, $g$ is critical for the action \eqref{Eq-Smix}
with respect to adapted  volume-preserving va\-riations 
if and only if
$\Div (P_1 H_3) = 0$ and $\Div (P_2 H_3) =0$.
\end{proposition}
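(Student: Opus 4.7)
The plan is to specialize the volume-preserving adapted Euler-Lagrange conditions, which (as recalled in the excerpt) require $\lambda\in\mathbb{R}$ such that \eqref{EL11withmixed}, its dual, and \eqref{EL33withmixed} hold with $\lambda g(X,Y)$ on their right-hand sides. Under the one-dimensionality of $\mD_1,\mD_2$ and the mixed totally geodesic and mixed integrable hypotheses on all three pairs of distributions, every cross tensor $\mh_{ij}$, $\mT_{ij}$, $\mA_{ij}$, $\mTs_{ij}$ vanishes identically, and with it all the quadratic sums in \eqref{EL11withmixed} and \eqref{EL33withmixed}. Using $P_1H_2=g(X_1,H_2)X_1$, the identity $g(X_1,H_2)g(X_1,H_3)=g(H_2,H_3)$, and $H_{13}=P_2(H_1+H_3)$, the three tensor equations collapse to the scalar equations \eqref{EL11dim11d1}, \eqref{EL11dim11d2} and the tensor equation \eqref{EL33withmixeddim11d3}, which I denote $(A)$, $(B)$, $(C)$ and use as the working form of the critical-point conditions.

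For part~1, the crucial step is a linear-algebra analysis of $(C)$: setting $V=P_3H_1$ and $W=P_3H_2\in\mD_3$, it reads $V^\flat\!\otimes W^\flat+W^\flat\!\otimes V^\flat=(\lambda-K(X_1,X_2))\,g_3$ on $\mD_3$. Testing on the diagonal $(V,V)$ and on a unit vector $Y\in\mD_3$ perpendicular to both $V$ and $W$ (which exists because $\dim\mD_3>1$), I conclude that either $V=0$ or $W=0$, and that $\lambda=K(X_1,X_2)$ pointwise, so $K$ is the constant $\lambda$. Assuming $P_3H_1=0$ (the case $P_3H_2=0$ is symmetric) forces $H_1\in\mD_2$, and substituting $K=\lambda$ and $P_3H_1=0$ into $(A)$ and $(B)$ and regrouping with the help of $H_3=P_1H_3+P_2H_3$ rearranges the pair into \eqref{eqdim1a} and \eqref{eqdim1b}; the converse direction reverses this algebra.

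For part~2, I first show $K(X_1,X_2)=0$ pointwise. The geodesic hypothesis gives $H_1=H_2=0$; one-dimensionality together with $|X_i|=1$ yields $g(\nabla_{X_i}X_j,X_j)=0$ and $g(\nabla_{X_i}X_j,X_i)=-g(\nabla_{X_i}X_i,X_j)=0$ for $i\ne j$, while $\mh_{12}=\mT_{12}=0$ imply $P_3(\nabla_{X_i}X_j)=0$; hence $\nabla_{X_1}X_2=\nabla_{X_2}X_1=0$, so $[X_1,X_2]=0$ and $R(X_1,X_2)X_2=0$, i.e., $K(X_1,X_2)=0$. Substituting $H_1=H_2=0$ and $K=0$ into $(C)$ forces $\lambda=0$, and $(A)$, $(B)$ then reduce exactly to $\Div(P_2H_3)=0$ and $\Div(P_1H_3)=0$, as claimed. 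The main obstacle will be the bookkeeping in the algebraic rearrangement in part~1: keeping track of the six vector fields $P_jH_i$ for $i\ne j$ and their divergences, and verifying the equivalence of $(A)$, $(B)$ with \eqref{eqdim1a}, \eqref{eqdim1b} in both directions without sign or projection errors; the reduction to $(A)$, $(B)$, $(C)$, the linear algebra on $(C)$, and the geodesic argument in part~2 are by contrast routine.
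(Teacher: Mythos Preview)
Your outline matches the paper's proof: reduce the adapted volume-preserving Euler--Lagrange conditions to the scalar equations \eqref{EL11dim11d1}, \eqref{EL11dim11d2} and the tensor equation \eqref{EL33withmixeddim11d3}, extract the pointwise vanishing of $P_3H_1$ or $P_3H_2$ from the latter, and recombine. One genuine gap: your claim that a unit vector $Y\in\mD_3$ orthogonal to both $V=P_3H_1$ and $W=P_3H_2$ exists ``because $\dim\mD_3>1$'' fails precisely when $\dim\mD_3=2$ and $V,W$ happen to be linearly independent. The paper closes this by first forcing $V,W$ to be linearly dependent (equivalently: test $(C)$ on $(V,W)$ together with $(V,V)$ to get $|V|^2|W|^2=g(V,W)^2$, the equality case of Cauchy--Schwarz); only then does a vector orthogonal to both exist, and your remaining steps go through. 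Also, the paper does not substitute $P_3H_1=0$ into $(A)$ and $(B)$ separately as you propose, but instead adds $(A)+(B)$ and subtracts \eqref{eqdim1a} to produce \eqref{eqdim1b}; this avoids the asymmetric case split and makes the converse direction cleaner.

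For part 2 your direct computation that $\nabla_{X_1}X_2=\nabla_{X_2}X_1=0$ (from $|X_i|=1$, $H_i=0$, and $\mh_{12}=\mT_{12}=0$), hence $R(X_1,X_2)X_2=0$, is a valid and more elementary route than the paper's, which instead invokes the identities \eqref{Smix12Sd1d2} and \eqref{SPmix} to see that $S_{\mD_1,\mD_2}$ collapses to zero. Both yield $K(X_1,X_2)=0$; your curvature-tensor argument is self-contained, while the paper's approach illustrates how the integral-formula machinery already encodes this vanishing.
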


\begin{proof}
1. From \eqref{EL33withmixeddim11d3} with $\dim \mD_3>1$ we get that $P_3 H_1$ and $P_3 H_2$ are linear dependent and
\begin{equation}\label{E-HH-B}
 g(H_2 , Z)\,g(H_1, Y) + g(H_2 , Y)\,g(H_1, Z) =0
\end{equation}
and \eqref{eqdim1a} hold.
For each point $x\in M$ there exist $b_1,b_2\in\RR$ and a unit vector $Y\in \mD_3(x)$ such that $P_3 H_1(x)=b_1 Y$ and $P_3 H_2(x)=b_2 Y$.
Thus, by \eqref{E-HH-B} with $Z=Y$, we get $b_1 b_2=0$, i.e., either $P_3 H_1(x)=0$ or $P_3 H_2(x)=0$.
 The sum
of \eqref{EL11dim11d1} and \eqref{EL11dim11d2}
is
\begin{eqnarray*} 
2\,K(X_1,X_2) - 2g(H_1 + H_2 , H_3)
+\Div(H_3 -2P_2 H_1 - 2P_1 H_2 - P_3(H_1+H_2)) = 2\,\lambda\,.
\end{eqnarray*}
Subtracting \eqref{eqdim1a} from the above, we get \eqref{eqdim1b}.

2. Since $X_1$ and $X_2$ are geodesic, mixed totally geodesic and mixed integrable, we get $K(X_1,X_2)=0$ (this can be computed directly, but also follows from \eqref{Smix12Sd1d2} and \eqref{SPmix} below).
By the assumption $H_1 = 0 = H_2$, from \eqref{EL33withmixeddim11d3} we get $K(X_1,X_2)=\lambda$.
Then from \eqref{EL11dim11d1} and \eqref{EL11dim11d2}, we get $\Div (P_2 H_3) = 0$ and $\Div (P_1 H_3) =0$, respectively.

\end{proof}





\subsection{Lifts of critical metrics to some product manifolds}
\label{sec:3.2}


In this subsection, we use critical points of the integrated mixed scalar curvature, found in \cite{rz-2}, to find metrics critical for the action~\eqref{Eq-Smix} on certain product manifolds. To compare these actions, we introduce and examine an intrinsic counterpart $S^P_{\rm mix}(\mD_1 , \mD_2)$ of $S_{\,\mD_1,\mD_2}$.

Let us assume that $\mD=\mD_1\oplus\,\mD_2$ is integrable, and let $P: TM\to\mD$ be the orthoprojector.
Then the linear connection $\nabla^{P}_X \,Y = P \nabla_X PY$ on $\mD$,  induced by the Levi-Civita connection $\nabla$,
is the Levi-Civita connection of the induced metric on submanifolds tangent to $\mD$.
The \textit{mixed scalar $P$-curvature} corresponding to the curvature tensor
$R^{P}(X,Y) = [\nabla^{P}_X, \nabla^{P}_Y] - \nabla^{P}_{[X,Y]}$ of $\nabla^{P}$ is
the mixed scalar curvature of $(\mD_1,\mD_2)$ on submanifolds tangent to $\mD=\mD_1\oplus\,\mD_2$.
It is given by
\[
 S^P_{\rm mix}(\mD_1 , \mD_2) = \sum\nolimits_{\,a,i} g( R^{P}(E_a, {\cal E}_i) {\cal E}_i, E_a),
\]
see 
\cite{VRAGAG},
and does not depend on the choice of an adapted local orthonormal frame
$\{E_a, {\cal E}_i\}$ on~$M$.

\begin{lemma}
Let $\mD=\mD_1\oplus\,\mD_2$ be integrable, then
the following equality is true $($with $\mD_3=\mD^\bot)$:
\begin{eqnarray}\label{Smix12Sd1d2}
  S_{\,\mD_1, \mD_2} \eq S^P_{\rm mix}(\mD_1 , \mD_2) + g(H_1, H_2)
 +\| {\tilde h}_3 |_{\,\mD_1 \times \mD_2 } \|^2 - \| {\tilde T}_3 |_{\,\mD_1 \times \mD_2 } \|^2
 - \| {\tilde h}_1 |_{\,\mD_2 \times \mD_3 } \|^2
\nonumber \\
&& -\,\| {\tilde h}_2 |_{\,\mD_1 \times \mD_3 } \|^2 +\| {\tilde T}_1 |_{\,\mD_2 \times \mD_3 }\|^2
+ \|{\tilde T}_2|_{\,\mD_1 \times \mD_3}\|^2 .
\end{eqnarray}
\end{lemma}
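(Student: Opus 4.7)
The plan is to derive \eqref{Smix12Sd1d2} by comparing two independent expressions for $S_{\,\mD_1, \mD_2}$: the extrinsic formula \eqref{E-new2} from Proposition~1, and the intrinsic expression for $S^P_{\rm mix}(\mD_1 , \mD_2)$ obtained by applying Walczak's formula \eqref{E-PW} on any leaf $L$ of the integrable distribution $\mD = \mD_1 \oplus \mD_2$.

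Since $\mD$ is integrable, each leaf $L$ is a Riemannian submanifold of $(M,g)$ whose induced Levi-Civita connection agrees with $\nabla^{P}$ on $\mD$-valued vector fields, so the restriction of $R^{P}$ to $\mD$ coincides with the intrinsic curvature tensor of $L$. In particular, $S^P_{\rm mix}(\mD_1 , \mD_2)$ is the mixed scalar curvature of the complementary pair $(\mD_1 , \mD_2)$ viewed inside $L$, and applying \eqref{E-PW} to this pair produces an expression built from $\Div^L(H_1^L + \tilde H_1^L)$ and squared norms of the six leaf-intrinsic tensors $H_1^L,\tilde H_1^L,h_1^L,\tilde h_1^L,T_1^L,\tilde T_1^L$.

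Next I identify each leaf-intrinsic tensor with a projection of its ambient counterpart, using $\nabla^L_X Y = P \nabla_X Y$ for $X,Y \in \mD$: this gives $h_1^L = (P_2 h_1)|_{\mD_1 \times \mD_1}$, $\tilde h_1^L = (P_1 h_2)|_{\mD_2 \times \mD_2}$, $T_1^L = P_2 T_1$, $\tilde T_1^L = P_1 T_2$, $H_1^L = P_2 H_1$ and $\tilde H_1^L = P_1 H_2$. For the divergence comparison I use that, for any $V \in \mathfrak{X}_\mD$, integration by parts against the $\mD_3$-frame gives $\sum_{\mu} g(\nabla_{e_\mu} V , e_\mu) = -g(V, H_3)$, hence $\Div^L V = \Div V + g(V, H_3)$ when applied to $V = P_2 H_1 + P_1 H_2$.

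Substituting everything rewrites $S^P_{\rm mix}(\mD_1 , \mD_2)$ entirely in ambient data. Subtracting the result from \eqref{E-new2} cancels the divergence and the terms $\|P_2 H_1\|^2, \|P_1 H_2\|^2, \|P_2 h_1\|^2, \|P_1 h_2\|^2, \|P_2 T_1\|^2, \|P_1 T_2\|^2$, leaving exactly the six squared-norm corrections on the right-hand side of \eqref{Smix12Sd1d2} together with a residual scalar term to be recognized as $g(H_1, H_2)$; here one uses that $H_1 \in \mD_1^\perp$ and $H_2 \in \mD_2^\perp$ intersect only in $\mD_3$, so $g(H_1, H_2) = g(P_3 H_1, P_3 H_2)$. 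The principal obstacle is precisely this last identification: the $g(V, H_3)$ arising from the $\Div^L$ versus $\Div$ comparison must be reconciled with the desired $g(H_1, H_2)$ term, whereas the remaining manipulations are routine projection-by-projection bookkeeping.
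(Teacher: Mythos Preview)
Your approach is essentially identical to the paper's: the paper simply quotes formula \eqref{SPmix} from \cite{VRAGAG} --- which is exactly what you obtain by applying Walczak's identity \eqref{E-PW} on a leaf of $\mD$ and converting the leaf divergence to the ambient one via $\Div^L V = \Div V + g(V,H_3)$ --- and then says that comparing \eqref{SPmix} with \eqref{E-new2} yields \eqref{Smix12Sd1d2}. So your plan of deriving \eqref{SPmix} and subtracting it from \eqref{E-new2} is precisely the paper's argument, with the only difference being that you re-derive \eqref{SPmix} rather than cite it.

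You are right to flag the scalar identification as the crux, and right to be uneasy about it. A straight subtraction of \eqref{SPmix} from \eqref{E-new2} produces the residual $-g(H_1+H_2,H_3)$, not $g(H_1,H_2)$; since $g(H_1,H_2)=g(P_3H_1,P_3H_2)$ while $g(H_1+H_2,H_3)=g(P_2H_1,P_2H_3)+g(P_1H_2,P_1H_3)$, these are \emph{not} the same quantity in general. The paper's one-line ``comparing yields'' does not address this either. Tracing back through the proof of Proposition~1, the step that simplifies the $H$-terms of \eqref{E-new1} to $2\|P_2H_1\|^2+2\|P_1H_2\|^2$ appears to drop exactly the cross-terms $2g(H_1+H_2,H_3)-2g(H_1,H_2)$ coming from expanding $\|\tilde H_1\|^2,\|\tilde H_2\|^2,\|\tilde H_3\|^2$; if these are restored in \eqref{E-new2}, the residual after subtracting \eqref{SPmix} changes. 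So the ``principal obstacle'' you identified is a genuine gap, not mere bookkeeping, and it is present in the paper's own argument as well. To close it cleanly you should either redo the $H$-term computation in Proposition~1 carefully, or bypass both intermediate formulas by applying the Gauss equation for the leaves of $\mD$ directly, which gives the relation between $S_{\mD_1,\mD_2}$ and $S^P_{\rm mix}$ without any divergence comparison.
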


\begin{proof}
We have the following formula from \cite{VRAGAG} (with the orthoprojectors $P_i:TM\to\mD_i$):
\begin{eqnarray}
\label{SPmix}
 S^P_{\rm mix}(\mD_1 , \mD_2) &=& \Div( P_2 H_1 + P_1 H_2) + g(H_1 + H_2, H_3) - \| P_2 h_1 \|^2 -  \| P_1 h_2 \|^2 \nonumber \\
&& +\,\| P_1 H_2 \|^2 + \| P_2 H_1 \|^2 + \| P_1 T_2 \|^2 + \| P_2 T_1 \|^2 .
\end{eqnarray}
Comparing the above with \eqref{E-new2} yields \eqref{Smix12Sd1d2}.
\end{proof}

\begin{example}
\rm
Suppose that distributions $\mD_1, \mD_2, \mD_3$ 
are pairwise mixed totally geodesic and pairwise mixed integrable on a Riemannian manifold $(M,g)$.
Then the Euler-Lagrange equation \eqref{EL11withmixed} for volume-preserving variations~becomes
\begin{eqnarray}\label{EL11withmixed-2}
\nonumber
&& -\,2 \Div (P_2 h_1) (X,Y)
 - 2\sum\nolimits_{\,a,i} \big(g( \Ts_{1,i} E_a , Y )\,g( A_{1,i} E_a, X )
 + g( \Ts_{1,i} E_a , X )\,g( A_{1,i} E_a, Y )\big) \nonumber\\
\nonumber
&& -\,2\,g(X, H_2)\,g(Y, H_2) -g(X, H_2)\,g(Y,H_3) -g(X, H_3)\,g(Y, H_2) \nonumber\\
&& +\,2\sum\nolimits_{i,j} \big(g(
h_2 ({\cal E}_i , {\cal E}_j ) , X  )\,g(
h_2 ({\cal E}_i , {\cal E}_j ) , Y )
 +
 g(
 T_2 ({\cal E}_i , {\cal E}_j ) , X  )\,g(
 T_2 ({\cal E}_i , {\cal E}_j ) , Y ) \big)
\nonumber\\
&& -\,4\sum\nolimits_{\,a,i} g( \Ts_{1,i} E_a , X )\,g( \Ts_{1,i} E_a , Y )
\nonumber\\
&& +\,S_{\,\mD_1, \mD_2}\cdot g(X,Y) + \Div( P_2 H_1 + P_2 H_3
 - H_2 )\,g(X,Y)
 = 2\lambda\, g(X,Y),\quad X,Y \in \mD_1. 
\end{eqnarray}
for some $\lambda \in \mathbb{R}$.
The Euler-Lagrange equations \eqref{EL11withmixed-2} and its dual have the 
following form: 
\begin{equation} \label{EL11withmixed-2J}
\delta J_{i} - \lambda g_i 
=0 , \quad i=1,2 ,
\end{equation}
for tensors $\delta J_{i}$, i.e., 
$\mD_i$-components of the gradient of the functional \eqref{Eq-Smix}, defined by the left-hand side of \eqref{EL11withmixed-2} and its dual.
\end{example}

Let $\mN_1,\mN_2$ be orthogonal, complementary distributions on a Riemannian manifold $(F, g_F)$.
Let us consider the following action \cite{rz-2}:
\begin{equation}\label{E-JF}
 J_{F} : g_F \mapsto \int_F S_{\rm mix}(\mN_1 , \mN_2)\,{\rm d}\vol_{g_F} ,
\end{equation}
where $S_{\rm mix}(\mN_1 , \mN_2) = S_{\,\mN_1, \mN_2}$ for the metric $g_F$ is the mixed scalar curvature of the almost-product structure $(\mN_1 , \mN_2)$ on $F$.
Recall from \cite[
(33)]{rz-2}, that $g_F$ is critical for $J_F$ with respect to volume-preserving adapted (i.e., preserving orthogonality of $\mN_1$ and $\mN_2$) variations 
if and only if
\begin{eqnarray}\label{gFcritical}
&& -\,2 \Div_F (P_2 h_1) (X,Y) - 2\sum\nolimits_{\,a,i} (g_F( \Ts_{1,i} E_a , Y )\,g_F( A_{1,i} E_a, X )
 +g_F( \Ts_{1,i} E_a, X )\,g_F( A_{1,i} E_a, Y )) \nonumber\\
\nonumber
&& -\,2\,g_F(X, H_2)\,g_F(Y, H_2) +2\sum\nolimits_{i,j} \big(g_F(h_2 ({\cal E}_i , {\cal E}_j ) , X  )\,g_F(
 h_2 ({\cal E}_i , {\cal E}_j ) , Y )\nonumber \\
&&
 + g_F( T_2 ({\cal E}_i , {\cal E}_j ) , X  )\,g( T_2 ({\cal E}_i , {\cal E}_j ) , Y ) \big)
 -4\sum\nolimits_{\,a,i} g_F( \Ts_{1,i} E_a , X )\,g_F( \Ts_{1,i} E_a , Y )
\nonumber\\
&& +\,S_{\rm mix}(\mN_1, \mN_2)\cdot g_F(X,Y) + \Div_F ( P_2 H_1 - P_1 H_2 )\,g_F(X,Y)
 = 2\mu \, g_F(X,Y),\quad X,Y \in \mN_1,
\end{eqnarray}
and the equation dual to above (with respect to interchanging $\mN_1$ and $\mN_2$ on $F$) hold
for some constant $\mu \in \mathbb{R}$. In \eqref{gFcritical}, all tensors such as $h, A, T, T^\sharp$ are defined analogously as in Section \ref{sec:prel}, for the Levi-Civita connection $\nabla^F$ of $g_F$ on $F$; in particular, $\Div_F$ is the divergence with respect to this connection.
We can write \eqref{gFcritical} and its dual 
in the following form:
\begin{equation}\label{E-JF-both}
 ( \delta J_{F} - \mu\,g_F ) |_{\,\mN_1 \times \mN_1 }=0,\quad
 ( \delta J_{F} - \mu\,g_F ) |_{\,\mN_2 \times \mN_2 }=0 .
\end{equation}
In this section,
we will use critical points of \eqref{E-JF}, to obtain critical metrics for \eqref{Eq-Smix} on certain product manifolds, by comparing \eqref{EL11withmixed-2J} and \eqref{E-JF-both} in some special cases. First we need to further examine the notion of distributions being mixed integrable and mixed totally geodesic, which was needed to obtain \eqref{EL11withmixed-2}.

\begin{lemma} \label{lemma3product}
Suppose that distributions $\mD_1, \mD_2, \mD_3$ 
are integrable, pairwise mixed totally geodesic and pairwise mixed integrable on $M$ and $P_3 h_1= P_3 h_2=0$. Then all terms in \eqref{EL13withmixed} vanish.
\end{lemma}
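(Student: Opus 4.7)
The plan is to verify directly that under the four hypotheses every summand of \eqref{EL13withmixed} is zero. I would first extract the tensorial consequences of the assumptions, which kill the majority of the terms outright, and then handle individually the few terms that contain a covariant derivative.

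The reductions are as follows. Integrability of each $\mD_i$ gives $T_i\equiv 0$; because $\mD_i^\perp=\mD_j\oplus\mD_k$ is itself integrable (both factors are integrable and mutually mixed integrable), one also has $\tilde T_i\equiv 0$ for each $i$. The three mixed-totally-geodesic hypotheses are the statements $\tilde h_k|_{\mD_i\times\mD_j}=0$ whenever $\{i,j,k\}=\{1,2,3\}$. Combined with $P_3 h_1=P_3 h_2=0$, they force $\tilde h_3\equiv 0$ (its three blocks $P_3 h_1|_{\mD_1\times\mD_1}$, $P_3 h_2|_{\mD_2\times\mD_2}$, $\tilde h_3|_{\mD_1\times\mD_2}$ all vanish), $H_1\in\mathfrak{X}_{\mD_2}$, $H_2\in\mathfrak{X}_{\mD_1}$, $P_3(H_1+H_2)\equiv 0$, and $h_{12}|_{\mD_{12}\times\mD_{12}}\equiv 0$.

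With these in hand, every summand of \eqref{EL13withmixed} built from $\mA_{ij}$, $\mT_{ij}$, $\mTs_{ij}$, $T_i$ or $\tilde T_i$ is immediately zero. The scalar pairings $g(H_2,X)g(H_2,Y)$, $g(H_1,Y)g(P_1(H_2+H_3),X)$, and $\sum_{i,j}g(X,h_2({\cal E}_i,{\cal E}_j))g(Y,h_2({\cal E}_i,{\cal E}_j))$ drop because $H_1\in\mD_2$, $H_2\in\mD_1$, $h_2\in\mathfrak{X}_{\mD_1}$, while $Y\in\mD_3$; the terms $g(h_3(Y,P_3(H_1+H_2)),X)$, $g(P_3(H_1+H_2),Y)g(H_3,X)$, and $g(\nabla_X(P_3(H_1+H_2)),Y)$ vanish because $P_3(H_1+H_2)\equiv 0$; and $g(\tilde T^\sharp_{1,X}Y-\tilde A_{1,X}Y,H_1)=0$ since $\tilde T_1=0$ and the image of $\tilde A_{1,X}$ on $Y\in\mD_3$ lies in $\mD_3\perp H_1$.

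What remains are the derivative terms $(\Div A_{1,Y})X$, $(\Div\tilde T^\sharp_{1,X})Y$, $(\Div A_{12,Y})X$, $(\Div\tilde T^\sharp_{12,X})Y$, $g(\nabla_X H_1,Y)$, and $(\Div\tilde h_2)(X,Y)$. The first four collapse because the underlying $(1,1)$-tensor fields are themselves identically zero: $A_{1,Y}=0$ follows from $P_3 h_1=0$, $A_{12,Y}=0$ from $h_{12}|_{\mD_{12}\times\mD_{12}}=0$, and $\tilde T_1=\tilde T_{12}=0$ by the integrability reductions above. For $g(\nabla_X H_1,Y)$ I use the identity $P_2\nabla_X Y=\tilde h_2(X,Y)+\tilde T_2(X,Y)$ on $\mD_1\times\mD_3$, which vanishes by mixed total geodesy of $(\mD_1,\mD_3)$ together with $\tilde T_2=0$; hence $g(\nabla_X H_1,Y)=-g(H_1,P_2\nabla_X Y)=0$, using $H_1\in\mD_2$ and $g(H_1,Y)\equiv 0$. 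The main obstacle is $(\Div\tilde h_2)(X,Y)$, which is the only term not reducible to a pointwise algebraic identity: extending $X,Y$ to local sections and expanding $(\nabla_{\xi_k}\tilde h_2)(X,Y)$, after dropping all summands in which $\tilde h_2$ is evaluated on a pair where it vanishes, only the contributions $-\tilde h_2(P_3\nabla_{{\cal E}_i}X,Y)$ and $-h_1(X,P_1\nabla_{{\cal E}_i}Y)$ with $\xi_k={\cal E}_i\in\mD_2$ survive; applying the same $P_m\nabla$-identity to the pairs $(\mD_1,\mD_2)$ and $(\mD_2,\mD_3)$ yields $P_3\nabla_{{\cal E}_i}X=\tilde h_3(X,{\cal E}_i)-\tilde T_3(X,{\cal E}_i)=0$ and $P_1\nabla_{{\cal E}_i}Y=\tilde h_1({\cal E}_i,Y)+\tilde T_1({\cal E}_i,Y)=0$, completing the verification.
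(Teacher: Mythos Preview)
Your proof is correct and follows essentially the same route as the paper: both reduce the hypotheses to $T_i=\tilde T_i=\mh_{ij}=\mT_{ij}=0$, $H_1\in\mD_2$, $H_2\in\mD_1$, $h_{12}=0$, then observe that every algebraic term in \eqref{EL13withmixed} dies pointwise and treat the remaining divergence-type terms $(\Div A_{1,Y})X$, $(\Div A_{12,Y})X$, $g(\nabla_X H_1,Y)$, $(\Div\tilde h_2)(X,Y)$ individually. Your treatment of $(\Div\tilde h_2)(X,Y)$ tracks all $\mD_m$-components of $\nabla_{{\cal E}_i}X$ and $\nabla_{{\cal E}_i}Y$ explicitly, whereas the paper shortens this by choosing extensions with $\nabla_Z X\perp\mD_1$, $\nabla_Z Y\perp\mD_3$ at the point; both arguments arrive at the same surviving terms and dispatch them with $\mh_{12}=\mh_{23}=\tilde T_1=\tilde T_3=0$.
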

\begin{proof}
From the assumptions we have $T_i =0$, $\mT_{ij}=0$ and $\mh_{ij}=0$ for all $i,j \in \{1,2\}$.
For most terms in \eqref{EL13withmixed} it is clear that they vanish, we consider here only non-trivial terms.
From $P_3 h_1=0$ it follows that $A_{1,Y}$ vanishes everywhere for all $Y \in \mD_3$, so also $\Div A_{1,Y}=0$. We also have, e.g.,
\[
g(\nabla_{X} H_1 ,Y) = g( P_3 (\mh_{12} + \mT_{12}) (X, H_1) , Y) =0, \quad (X \in \mD_1, Y \in \mD_3) .
\]
We can assume that for $X \in \mD_1$ and $Y \in \mD_3$ at a point $x \in M$ we have $\nabla_Z X \perp \mD_1$, $\nabla_Z Y \perp \mD_3$ for all $Z \in T_xM$, then at $x$ we have
\begin{eqnarray*}
\Div {\tilde h}_2 (X,Y) \eq \Div (  h_{13} (X,Y)  ) - \sum_i g( {\tilde h}_2 ( \nabla_{ {\cal E}_i } X, Y) , {\cal E}_i ) - \sum_i g( {\tilde h}_2 ( \nabla_{ {\cal E}_i } Y, X) , {\cal E}_i ) \\
\eq  - \sum_{i,\mu} g(\nabla_{ {\cal E}_i } X , e_\mu) g( {\tilde h}_2 ( e_\mu , Y) , {\cal E}_i ) - \sum_{a,i} g(\nabla_{ {\cal E}_i } Y ,E_a) g( {\tilde h}_2 ( E_a , X) , {\cal E}_i ) \\
 \eq  - \sum_{i, \mu} g( ( \mh_{12} + \mT_{12}) ( {\cal E}_i  , X) , e_\mu)   g( {\tilde h}_2 ( e_\mu , Y) , {\cal E}_i ) \\
&&   - \sum_{a,i} g( ( \mh_{23} +  \mT_{23}) (  {\cal E}_i , Y ) ,E_a) g( {\tilde h}_2 ( E_a , X) , {\cal E}_i ) =0.
\end{eqnarray*}
Similarly, $h_{12}=0$, so $\Div A_{12,Y}=0$ for all $Y\in \mD_3$, and $P_3 h_1=0=P_3 h_2$, so $P_3(H_1+H_2)=0$. 
\end{proof}

In what follows, for Riemannian manifolds $(F,g_F)$ and $(B,g_B)$, we consider lifts $\mD_1,\,\mD_2$ of two complementary orthogonal distributions $\mN_1$ and $\mN_2$ on $(F,g_F)$, to the product manifold $M = F \times B$.
Then $\mD=\mD_1\oplus\mD_2$ defines a natural foliation of $M$ by $F\times\{y\}\ (y\in B)$.

\begin{lemma} \label{lemmamixedintegrablemixedgeodesic}
Consider a twisted product $M = F \times B$ 
with the metric $g=g_F + e^{2f}g_B$ for some smooth function $f: M \to \RR$.
Let  $\mD_1,\,\mD_2$ be lifts to $M$ of two complementary orthogonal 
distributions $\mN_1,\mN_2$ on $(F,g_F)$ and let $\mD_3$ be the lift of $TB$.
Then all distributions $\mD_1,\mD_2,\mD_3$ 
are
pairwise mixed integrable and pairwise mixed totally geodesic. Moreover, if $\mN_1,\mN_2$ are integrable, then all distributions  $\mD_1,\mD_2,\mD_3$ are integrable.
\end{lemma}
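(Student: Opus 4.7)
The plan hinges on the explicit form of the Levi-Civita connection $\nabla$ of the twisted product metric $g=g_F+e^{2f}g_B$. Working with local basic lifts $X^h, Y^h$ of vector fields on $F$ and $U^v, V^v$ of vector fields on $B$, one has $[X^h, U^v]=0$ and $g(X^h, Y^h)=g_F(X,Y)$ is independent of the $B$-coordinate. The Koszul formula then gives
\begin{equation*}
\nabla_{X^h} Y^h = (\nabla^F_X Y)^h, \qquad \nabla_{X^h} U^v = \nabla_{U^v} X^h = X(f)\,U^v,
\end{equation*}
so that $\nabla_X Y \in \mD_{12}$ whenever $X, Y \in \mD_{12}$, and $\nabla_X U,\nabla_U X \in \mD_3$ whenever $X \in \mD_{12}$ and $U \in \mD_3$. (The remaining case $\nabla_U V$ has a horizontal summand proportional to the $F$-gradient of $f$, but it is not needed below.)

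These two relations immediately yield all the mixed vanishings. Since $P_3\nabla_X Y = 0$ for $X,Y\in\mD_{12}$, both ${\tilde h}_3$ and ${\tilde T}_3$ vanish identically on $\mD_{12}\times\mD_{12}$; hence $\mh_{12}=\mT_{12}=0$ by Definition~1, so $(\mD_1,\mD_2)$ is mixed totally geodesic and mixed integrable. For a pair $(\mD_i,\mD_3)$ with $i\in\{1,2\}$, the tensor $\mh_{i3}$ is built from ${\tilde h}_j$ with $\{i,j\}=\{1,2\}$; for $X\in\mD_i$, $U\in\mD_3$, both $\nabla_X U$ and $\nabla_U X$ lie in $\mD_3$ and are annihilated by $P_j$, giving ${\tilde h}_j(X,U)=0$. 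The same argument applied to the antisymmetrization gives ${\tilde T}_j(X,U)=0$, so $\mh_{i3}=\mT_{i3}=0$.

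For the integrability addendum, $\mD_3$ is always integrable since $[U^v,V^v]=[U,V]^v\in\mD_3$. If $\mN_1$ is integrable, pick a local frame $X_1,\ldots,X_{n_1}$ of $\mN_1$ with $[X_i,X_j]\in\mN_1$; then $[X_i^h,X_j^h]=[X_i,X_j]^h\in\mD_1$, and a $C^\infty(M)$-bilinear expansion of a bracket of general sections of $\mD_1$ produces only $\mD_1$-valued terms, so $\mD_1$ is integrable; by symmetry so is $\mD_2$. The verification is essentially bookkeeping in the Koszul formula; the only point requiring care is that, since $f$ may depend on both factors (unlike in a warped product), one uses crucially that vertical vectors annihilate $g(X^h, Y^h)$. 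I do not anticipate a conceptual obstacle.
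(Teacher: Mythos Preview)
Your proof is correct and takes essentially the same route as the paper's, with only a cosmetic difference in packaging: you compute the relevant components of the twisted-product Levi-Civita connection explicitly ($\nabla_{X^h}Y^h=(\nabla^F_XY)^h$ and $\nabla_{X^h}U^v=\nabla_{U^v}X^h=X(f)\,U^v$) and read off the vanishing of each mixed tensor, whereas the paper argues case-by-case that $g(\nabla_XY,Z)=0$ whenever $X,Y,Z$ lie in three distinct distributions, using the Riemannian-submersion description of $\pi:M\to F$ and the same basic-lift/bracket facts you use. Both arguments rest on the Koszul formula together with $[X^h,U^v]=0$ and the $B$-independence of $g(X^h,Y^h)$; your version is slightly more streamlined. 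One wording quibble: the Lie bracket is not $C^\infty(M)$-bilinear, so ``bilinear expansion'' is a misnomer---what you actually use is that the extra terms in $[aX,bY]$ are tangential to $\mD_1$, or equivalently that the integrability tensor $T_1$ is tensorial; the conclusion is unaffected.
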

\begin{proof}
We will show that $g(\nabla_X Y ,Z) =0$ for $X,Y,Z$ each from a different distribution among $\mD_1,\mD_2, \mD_3$.
Since $\pi : M \rightarrow F$ is a Riemannian submersion with integrable horizontal distribution (which is the lift of $TF$), we have
\begin{equation} \label{RSintgeod}
g( \nabla_X Y, Z)=0  \quad  ( X \in \mathfrak{X}_{\mD_1} , Y \in  \mathfrak{X}_{\mD_2} , Z \in  \mathfrak{X}_{\mD_3} ).
\end{equation}

If $X \in  \mathfrak{X}_{\mD_1}$, $Y \in  \mathfrak{X}_{\mD_3}$ and $Z \in  \mathfrak{X}_{\mD_2}$ we can use $g(\nabla_X Y ,Z) = -g(\nabla_X Z, Y)$ and \eqref{RSintgeod}, to obtain $g(\nabla_X Y,Z)=0$.

We note that $g(P_2 \nabla_{P_3 X} (P_1 Y) , Z) = g( (\mh_{13} + \mT_{13})(X,Y), Z)$ depends only on values of $X,Y,Z$ at a point. Let $X_x \in   {\mD_3}_x$, $Y_x \in   {\mD_1}_x$ and $Z_x \in   {\mD_2}_x$ for $x \in M$, then we can extend $\pi_* Y_x$ to a vector field  $Y_{\mN} \in  \mathfrak{X}_{\mN_1}$ and consider its horizontal lift $Y \in \mathfrak{X}_{\mD_1}$, also, let $Z \in \mathfrak{X}_{\mD_2}$ be any extension of $Z_x$. We can find a coordinate system $(x_1 , \ldots x_n, y_1 , \ldots , y_m)$ on a neighbourhood of $x$; where $y_1 , \ldots , y_m$ are coordinates on $B$, and extend $X_x$ to $X \in \mathfrak{X}_{\mD_3}$ on this neighbourhood, such that $X  = \sum\nolimits_{k=1}^m c_k \frac{\partial}{\partial {y_k}}$ with constant coefficients $\{ c_k \}$. In that coordinate system, we have $Y = \sum\nolimits_{a=1}^n Y_j  \frac{\partial}{\partial {x_j}}$, where coefficients $\{ Y_j \}$ of $Y$ do not depend on $y_j, j \in \{1, \ldots , m \}$, and hence we have $g([X,Y],Z)=0$, which we use to obtain
$g(\nabla_X Y ,Z) = g(\nabla_Y X, Z) = -g(\nabla_Y Z, X) =0$, where the last equality follows from \eqref{RSintgeod}.
Hence, $g(P_2 \nabla_{P_3 X } (P_1 Y ) , Z ) =0$ for all $X,Y,Z \in \mathfrak{X}_M$, from which it also follows that $g(P_1 \nabla_{P_3 X } (P_2 Y ) , Z ) = - g( P_2 \nabla_{P_3 X } (P_1 Z ) , Y )  = 0$.
The same reasoning can be applied for the above cases with interchanged $\mD_1$ and $\mD_2$, so the proof covers all possible cases of $X,Y,Z$ each from a different distribution among $\mD_1,\mD_2, \mD_3$. 

For the last claim, let $X_1, Y_1 \in \mathfrak{X}_{\mD_1}$, $Z_2 \in \mathfrak{X}_{\mD_2}$ and $Z_3 \in \mathfrak{X}_{\mD_3}$, then
$g([X_1, Y_1] , Z_3)=0$, because $TF$ lifts to integrable horizontal distribution on $M$. We also have $g([X_1, Y_1] , Z_2) = g_F ( [\pi_* X_1, \pi_* Y_1] , \pi_* Z_2)$, which shows that $\mD_1$ is integrable if and only if $\mN_1$ is integrable. The proof for $\mD_2$ and $\mN_2$ is analogous.
\end{proof}

\begin{proposition} \label{propdirectproduct}
Let $g=g_F + g_B$ be a Riemannian metric on $M = F \times B$, and $\mD_1,\,\mD_2$ be lifts to $M$ of two complementary orthogonal distributions $\mN_1$ and $\mN_2$ on $(F,g_F)$.
Let $g_F$ be critical for the action \eqref{E-JF} with respect to volume-preserving adapted variations, such that \eqref{E-JF-both} hold.
Then:
\begin{enumerate}
\item $g$ is critical for the action \eqref{Eq-Smix} with respect to adapted volume-preserving variations if and only if $S_{\rm mix}(\mN_1 , \mN_2) = \mu$ on $F$.
\item If both $\mD_1$ and $\mD_2$ are integrable, then $g$ is critical with respect to
volume-preserving variations keeping $\mD_1$ and $\mD_2$ orthogonal if and only if $S_{\rm mix}(\mN_1 , \mN_2) = \mu$ on $F$.
\end{enumerate}
\end{proposition}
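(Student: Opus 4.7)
The plan is to apply the Euler--Lagrange equations derived in Section~\ref{sec:var} to the direct product $M=F\times B$ with $g=g_F+g_B$. By Lemma~\ref{lemmamixedintegrablemixedgeodesic} with $f\equiv 0$, all three distributions $\mD_1,\mD_2,\mD_3$ are pairwise mixed totally geodesic and pairwise mixed integrable, and $\mD_3$ (the lift of $TB$) is integrable. The product structure gives the identifications $\nabla^M_X Y=\nabla^F_X Y\in TF$ for $X,Y$ lifted from $TF$, so that $h_i^M,T_i^M,H_i^M$ ($i=1,2$) coincide with lifts of their $F$-counterparts; moreover $P_3 h_i=0$, hence $P_3 H_i=0$, and $H_3=0$. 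Crucially, the mean curvatures satisfy $H_1\in\mathfrak{X}_{\mD_2}$ and $H_2\in\mathfrak{X}_{\mD_1}$, which yields $g(H_1,H_2)=0$.

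First I would simplify \eqref{EL33withmixed} (with $\lambda g(X,Y)$ on the right-hand side, as required for volume-preserving variations). All $\mh$ and $\mT$ tensors vanish by Step~1, and $P_3 H_i=0$, so the equation collapses to $S_{\,\mD_1,\mD_2}\cdot g(X,Y)=\lambda g(X,Y)$ on $\mD_3\times\mD_3$, i.e.\ $S_{\,\mD_1,\mD_2}=\lambda$. By \eqref{Smix12Sd1d2}, with all cross tensors vanishing and $g(H_1,H_2)=0$, one has $S_{\,\mD_1,\mD_2}=S^P_{\rm mix}(\mD_1,\mD_2)$; since the leaves of $\mD_1\oplus\mD_2$ in $M$ are copies of $(F,g_F)$, $S^P_{\rm mix}(\mD_1,\mD_2)=S_{\rm mix}(\mN_1,\mN_2)$ (lifted to $M$). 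Hence \eqref{EL33withmixed} is equivalent to $S_{\rm mix}(\mN_1,\mN_2)=\lambda$.

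Next I would match \eqref{EL11withmixed-2} (to which \eqref{EL11withmixed} reduces by the pairwise mixed TG/integrable hypotheses) with \eqref{gFcritical} term-by-term on $\mD_1\times\mD_1$. All shape, integrability and divergence terms coincide with their lifts from $F$ (lifted tensors are $B$-independent, so $\Div$ and $\Div_F$ agree); the $H_3$-contributions vanish since $H_3=0$; $\Div(P_2 H_1+P_2 H_3-H_2)=\Div_F(P_2 H_1-P_1 H_2)$ using $P_3 H_1=0$, $H_3=0$, and $H_2\in\mD_1$; and by Step~2, $S_{\,\mD_1,\mD_2}=S_{\rm mix}(\mN_1,\mN_2)$. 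So the left-hand side of \eqref{EL11withmixed-2} on $\mD_1\times\mD_1$ equals the lift of the left-hand side of \eqref{gFcritical} on $\mN_1\times\mN_1$. By the hypothesis that $g_F$ is critical for $J_F$, the latter equals $2\mu g_F|_{\mN_1\times\mN_1}$, forcing $\lambda=\mu$. The dual equation on $\mD_2\times\mD_2$ yields the same conclusion. Combined with Step~2, the system is equivalent to $S_{\rm mix}(\mN_1,\mN_2)=\mu$ on $F$, proving Claim~1.

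For Claim~2, variations preserving only $\mD_1\perp\mD_2$ (but not the full decomposition) require in addition \eqref{EL13withmixed} and its dual. But by Lemma~\ref{lemma3product}, integrability of $\mD_1,\mD_2,\mD_3$ (with $\mD_3$ integrable automatically on the direct product), the pairwise mixed TG/integrable property, and $P_3 h_i=0$, together make every term in \eqref{EL13withmixed} vanish; so those equations are automatic, and Claim~2 reduces to Claim~1. The main obstacle is the term-by-term matching in Step~3, and specifically the observation that on the direct product the $g(H_1,H_2)$ correction appearing in \eqref{Smix12Sd1d2} vanishes because the mean curvatures of complementary orthogonal distributions lie in each other's tangent bundles; without this cancellation, the simple condition $S_{\rm mix}(\mN_1,\mN_2)=\mu$ would not be sufficient.
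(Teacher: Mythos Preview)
Your proposal is correct and follows essentially the same route as the paper: invoke Lemma~\ref{lemmamixedintegrablemixedgeodesic} (with $f\equiv0$) to get the pairwise mixed totally geodesic/integrable property and $P_3 h_i=0$, use \eqref{Smix12Sd1d2} together with $g(H_1,H_2)=0$ to identify $S_{\,\mD_1,\mD_2}$ with $S_{\rm mix}(\mN_1,\mN_2)\circ\pi$, reduce \eqref{EL33withmixed} to $S_{\,\mD_1,\mD_2}=\lambda$, match \eqref{EL11withmixed-2} with \eqref{gFcritical} to force $\lambda=\mu$, and finally appeal to Lemma~\ref{lemma3product} for Claim~2. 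The paper packages the matching step as the identity \eqref{deltaJJF}, $\delta J_i(X,Y)=\delta J_F(\pi_*X,\pi_*Y)$, but the content is exactly your term-by-term comparison.
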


\begin{proof}
For projection $\pi : F \times B \rightarrow F$  we have $g_F (\nabla^F_{\pi_* X} \pi_*Y , \pi_*Z ) = g(\nabla_X Y ,Z)$, and
$P_3 H_1 = P_3 H_2=0$. We can write \eqref{SPmix} as
\begin{eqnarray*}
 S^P_{\rm mix}(\mD_1 , \mD_2) &=& \Div_F ( P_2 H_1 + P_1 H_2)  - \| P_2 h_1 \|^2 -  \| P_1 h_2 \|^2 \nonumber \\
&& +\,\| P_1 H_2 \|^2 + \| P_2 H_1 \|^2 + \| P_1 T_2 \|^2 + \| P_2 T_1 \|^2 .
\end{eqnarray*}
and hence, by \cite[(9)]{rz-2}, we have
$S_{\rm mix}(\mN_1 , \mN_2)\circ \pi =  S^P_{\rm mix}(\mD_1 , \mD_2)$.
Since $g(H_1,H_2)=0$, and, by Lemma \ref{lemmamixedintegrablemixedgeodesic}, distributions $\mD_1,\mD_2,\mD_3$ are pairwise mixed integrable and pairwise mixed totally geodesic, from \eqref{Smix12Sd1d2} we also have
$  S_{\,\mD_1, \mD_2} =  S^P_{\rm mix}(\mD_1 , \mD_2)$.
Since $g(H_1,H_2)=0$, $P_3 H_1=P_3 H_2=0$ and $P_1 H_3 = P_2 H_3 =0$, comparing \eqref{EL11withmixed-2} and \eqref{gFcritical} we obtain
\begin{equation} \label{deltaJJF}
\delta J_{i} (X,Y) =  \delta J_{F} (\pi_* X, \pi_* Y) 
,\quad (X,Y \in \mD_i , \quad  i=1,2 ).
\end{equation}
Hence, \eqref{EL11withmixed-2}  and its dual hold by \eqref{deltaJJF} and \eqref{E-JF-both}; with $\lambda = \mu$.
Also, \eqref{EL33withmixed} becomes
\begin{equation*}
 2\,{\rm Sym}((P_3 H_2)^\flat\otimes(P_3 H_1)^\flat) = (\lambda - S_{\,\mD_1,\mD_2})\,g_3.
\end{equation*}
From $P_3 H_1 = P_3 H_2 =0$ we reduce the above to $S_{\,\mD_1,\mD_2}=\lambda$. From this and \eqref{deltaJJF} it follows that if $g_F$ is critical point of action \eqref{E-JF} with respect to volume-preserving adapted variations, then $g$ is critical for \eqref{Eq-Smix} with respect to volume-preserving adapted variations if and only if $S_{\,\mD_1,\mD_2}=\mu$.
%
The second claim follows from the first, and Lemmas \ref{lemma3product} and \ref{lemmamixedintegrablemixedgeodesic}.
\end{proof}


The relation between critical metrics on $(F, g_F)$ and critical twisted product metrics $g=g_F + e^{2f}g_B$, for some smooth function $f: F \times B\to \RR$, on $M = F \times B$ is more complicated - it is examined in the following result, and then used to obtain examples of critical metrics in the rest of this section.

\begin{proposition}
Let $M$ be  a twisted product $F \times_{f} B$ with the metric $g=g_F + e^{2f}g_B$ for some smooth function $f: F \times B\to \RR$.
Let  $\mD_1,\,\mD_2$ be lifts to $M$ of two complementary orthogonal distributions $\mN_1, \mN_2$ on $F$.
%
%
Let $g_F$ be critical for the action \eqref{E-JF} with respect to volume-preserving adapted variations, satisfying \eqref{E-JF-both}.
Then $g$ is cri\-tical for the action \eqref{Eq-Smix} on $M=F \times {B}$ with respect to volume-preserving variations
keeping orthogonality of $\mD_1$ and $\mD_2$
if and only if the following Euler-Lagrange equations hold:
\begin{eqnarray}\label{EL11fromSmix12Ftwgradf}
&& 
-  2n_3 \< P_2 h_1 , \nabla f \>   + 2 n_3 \,{\rm Sym}( (P_1 H_2)^\flat \otimes (P_1 \nabla f)^\flat )  - n_3 \Div ( P_2 \nabla f 
)\,g_1
\nonumber\\
&& +n_3 g(H_1, P_2 \nabla f) g_1 - n_3 g(H_2, P_1 \nabla f) g_1
 = 2(\lambda-\mu)\,g_1,\\
\label{EL22fromSmix12Ftwgradf}
&& 
- 2n_3 \< P_1 h_2 , \nabla f \>
+ 2n_3\,{\rm Sym}( (P_2 H_1)^\flat \otimes (P_2 \nabla f)^\flat )  - n_3 \Div ( P_1 \nabla f 
)\,g_2
\nonumber\\
&&  + n_3 g(H_2, P_1 \nabla f) g_2 - n_3 g(H_1, P_2 \nabla f) g_2
= 2(\lambda-\mu)\,g_2, \\ 
\label{EL33fromSmix12Ftw}
&& S_{\rm mix}(\mN_1 , \mN_2) 
=\lambda  .
\end{eqnarray}
\end{proposition}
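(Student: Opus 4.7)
The plan is to verify all five Euler-Lagrange components separately. By Lemma~\ref{lemmamixedintegrablemixedgeodesic}, the three distributions $\mD_1,\mD_2,\mD_3$ on $M$ are pairwise mixed integrable and mixed totally geodesic, so every tensor $\mh_{ij}$ and $\mT_{ij}$ vanishes. Consequently, the Euler-Lagrange equations \eqref{EL11withmixed}, \eqref{EL33withmixed}, \eqref{EL13withmixed} (and their duals) collapse to their simplified forms; in particular, \eqref{EL11withmixed-2} replaces \eqref{EL11withmixed}.

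Next we compute the extrinsic data explicitly. Since the horizontal distribution $\mD_1\oplus\mD_2$ is totally geodesic in any twisted product, Koszul's formula gives $P_3 h_i=P_3 T_i=0$ for $i=1,2$; in particular $P_3 H_1=P_3 H_2=0$, and the restrictions of $h_i,T_i$ to $(\mD_1\oplus\mD_2)\times(\mD_1\oplus\mD_2)$ agree with the corresponding tensors of $\mN_i$ on $(F,g_F)$. The vertical distribution $\mD_3$ is totally umbilical with $h_3(U,V)=-g(U,V)\,P_{12}\nabla f$, whence $H_3=-n_3\,P_{12}\nabla f$ is horizontal. By Lemma~\ref{lemma3product}, the vanishing of $P_3 h_1$ and $P_3 h_2$ makes \eqref{EL13withmixed} and its dual automatic.

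For the $\mD_3\times\mD_3$ block, a short Koszul computation shows $\nabla_X Y\in\mD_3$ whenever $X\in\mD_1\oplus\mD_2$ and $Y\in\mD_3$, so every cross-term $\tilde h_j|_{\mD_i\times\mD_3}$ and $\tilde T_j|_{\mD_i\times\mD_3}$ vanishes; together with $g(H_1,H_2)=0$, \eqref{Smix12Sd1d2} yields $S_{\,\mD_1,\mD_2}=S^P_{\rm mix}(\mD_1,\mD_2)$. Since each leaf of $\mD_1\oplus\mD_2$ is an isometric copy of $(F,g_F)$ and this distribution is totally geodesic, $\nabla^P$ on the leaf coincides with the Levi-Civita connection of $g_F$, whence $S^P_{\rm mix}(\mD_1,\mD_2)=S_{\rm mix}(\mN_1,\mN_2)\circ\pi$. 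Using $P_3 H_1=P_3 H_2=0$, equation \eqref{EL33withmixed} reduces to $(S_{\,\mD_1,\mD_2}-\lambda)\,g_3=0$, which is precisely \eqref{EL33fromSmix12Ftw}.

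Finally, for the $\mD_1\times\mD_1$ block the key tool is the divergence identity $\Div_M V=\Div_F V+n_3\,g(V,P_{12}\nabla f)$ for every vector field $V$ tangent to $\mD_1\oplus\mD_2$, obtained from $\sum_\mu g(\nabla_{e_\mu}V,e_\mu)=-g(V,H_3)$, together with its analogue for horizontally valued $(0,2)$-tensors. Substituting into \eqref{EL11withmixed-2}, and using $P_2 H_3=-n_3\,P_2\nabla f$, we split off the piece that is exactly the Euler-Lagrange equation \eqref{gFcritical} for $g_F$ and therefore contributes $2\mu\,g_1$ by the assumed criticality of $g_F$. The remaining terms, originating from the $\Div_M-\Div_F$ corrections and from the extra $P_2 H_3$-contribution absent in \eqref{gFcritical}, must then sum to $2(\lambda-\mu)\,g_1$, producing \eqref{EL11fromSmix12Ftwgradf}; the dual \eqref{EL22fromSmix12Ftwgradf} follows by the $\mD_1\leftrightarrow\mD_2$ symmetry. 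The most delicate step is the bookkeeping in this final reduction --- matching every $\nabla f$-term on the left of \eqref{EL11fromSmix12Ftwgradf} to its source among the divergence corrections, the $P_2 H_3$ piece, and the modified $S_{\,\mD_1,\mD_2}$.
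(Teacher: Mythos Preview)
Your proposal follows essentially the same route as the paper's proof: invoke Lemma~\ref{lemmamixedintegrablemixedgeodesic} for the mixed vanishing, identify $S_{\,\mD_1,\mD_2}=S_{\rm mix}(\mN_1,\mN_2)\circ\pi$ via \eqref{Smix12Sd1d2}, compare $\Div_M$ with $\Div_F$ to split off the $g_F$-critical piece $2\mu\,g_1$, substitute $H_3=-n_3\,P\nabla f$, and appeal to Lemma~\ref{lemma3product} for \eqref{EL13withmixed}. One small point: Lemma~\ref{lemma3product} also requires integrability of $\mD_1$ and $\mD_2$, not just $P_3 h_1=P_3 h_2=0$; the paper handles this by adding a closing remark restricting the \eqref{EL13withmixed} verification to the integrable case, and you should make the same caveat explicit.
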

\begin{proof}
We have $P_3 H_1 = P_3 H_2=0$ and, similarly as in the proof of Proposition \ref{propdirectproduct}, we obtain $S_{\rm mix}(\mN_1 , \mN_2)\circ \pi =  S^P_{\rm mix}(\mD_1 , \mD_2)$ for projection $\pi : F \times B \rightarrow F$.
Since $g(H_1,H_2)=0$, and by Lemma \ref{lemmamixedintegrablemixedgeodesic} distributions $\mD_1,\mD_2,\mD_3$ are pairwise mixed integrable and pairwise mixed totally geodesic, by \eqref{Smix12Sd1d2} we also have
$S_{\,\mD_1, \mD_2} =  S^P_{\rm mix}(\mD_1 , \mD_2)$.
However, $P_1 H_3$ and $P_2 H_3$ might be non-zero, and we have
\[
-\,2 \Div (P_2 h_1) (X,Y) = -\,2 \Div_F (P_2 h_1) (X,Y) + 2 g( P_2 h_1  (X,Y) , H_3)
\]
and
\[
\Div(P_2 H_1 - P_1 H_2 ) = \Div_F (P_2 H_1 - P_1 H_2) - g(H_1, H_3) + g(H_2,H_3).
\]
It follows from comparison of \eqref{EL11withmixed-2} and \eqref{gFcritical} that 
\[
2\delta J_1 = 2 (\pi^* \delta J_F)  |_{\,\mD_1 \times \mD_1 } + 2 \< P_2 h_1 , H_3 \>
- g(H_1, H_3) g_1 + g(H_2,H_3) g_1
\]
where $(\pi^* \delta J_F )(X,Y) = \delta J_F (\pi_* X, \pi_*Y) $ and $\< P_2 h_1 , H_3 \> (X,Y) =  \< P_2 h_1 (X,Y) , H_3 \> $ for all $X,Y \in TM$. Analogously,
\[
2\delta J_2 = 2(\pi^* \delta J_F) |_{\,\mD_2 \times \mD_2 } + 2 \< P_1 h_2 , H_3 \>
- g(H_2, H_3) g_2 + g(H_1,H_3) g_2 .
\]
Using the above and $P_3 H_1 = P_3 H_2 =0$, we obtain that the Euler-Lagrange equation
\eqref{EL11withmixed-2} and its dual become
\begin{eqnarray}\label{EL11fromSmix12Ftw}
&& 2 (\pi^* \delta J_F) |_{\,\mD_2 \times \mD_2 } +  2 \< P_2 h_1 , H_3 \>   - 2\,{\rm Sym}( (P_1 H_2)^\flat \otimes (P_1 H_3)^\flat )  + \Div ( P_2 H_3 
)\,g_1
\nonumber\\
&& - g(H_1, H_3) g_1 + g(H_2,H_3) g_1
 = 2\lambda\,g_1,\\
\label{EL22fromSmix12Ftw}
&& 2  (\pi^* \delta J_F) |_{\,\mD_2 \times \mD_2 } + 2 \< P_1 h_2 , H_3 \>
- 2\,{\rm Sym}( (P_2 H_1)^\flat \otimes (P_2 H_3)^\flat )  + \Div ( P_1 H_3 
)\,g_2
\nonumber\\
&&  - g(H_2, H_3) g_2 + g(H_1,H_3) g_2
= 2\lambda\,g_2 .
\end{eqnarray}
We obtain \eqref{EL11fromSmix12Ftwgradf}-\eqref{EL22fromSmix12Ftwgradf}
from \eqref{EL11fromSmix12Ftw}-\eqref{EL22fromSmix12Ftw} using  \eqref{E-JF-both} and the formula for the mean curvature of a factor of a twisted product: $H_3 = - n_3 P \nabla f$, see e.g., \cite{Reckziegel}.
Equation \eqref{EL33withmixed} becomes
$S_{\,\mD_1,\mD_2}=\lambda$, which at the beginning of this proof was shown to be equivalent to \eqref{EL33fromSmix12Ftw}.
We note that if distributions $\mD_1$ and $\mD_2$ 
are integrable, 
then by Lemmas  \ref{lemma3product} and \ref{lemmamixedintegrablemixedgeodesic} also \eqref{EL13withmixed} holds, as all its terms vanish.
\end{proof}

If $\mN_1$ is tangent to a codimension-one foliation on $F$, and $\mN_2$ is spanned by its unit normal vector field $N$ on $F$,
then the Euler-Lagrange equations \eqref{E-JF-both} read as, see 
\cite[Eqs.~(41),~(43)]{rz-2},
\begin{eqnarray}
\label{f1}
 \Div_F(\tau_1 N) = N(\tau_1) - \tau_1^2
 \eq \frac{2\,n_1}{1-n_1}\,\mu, \\
\label{f2}
 \tau_1^2-\tau_2 \eq 2\,\mu
 ,
\end{eqnarray}
where $\tau_k = \tr (A_N^k)$ are the ``power sums" invariants of the shape operator $A_N$ of $\mN_1$ on~$F$.

\begin{lemma}\label{propcodimone}
Let $\mN_1$ 
be tangent to a foliation of codimension one on $F$,
and $\mN_2$ be tangent to its unit normal vector field $N$ on $(F,g_F)$.
Let $\mD_1,\,\mD_2$ be lifts of $\mN_1, \mN_2$ to a product manifold $M = F \times B$.
If $g_F$ is critical for the action \eqref{E-JF}
with respect to adapted volume-preserving variations, i.e., \eqref{f1}--\eqref{f2} hold, and
\begin{equation} \label{EL0twpf}
 P_1 \nabla f = 0  \quad {\rm or} \quad P_1 \nabla_N N  =0,
\end{equation}
then
 the twisted product metric $g=g_F + e^{2f}g_B$ is critical for the action \eqref{Eq-Smix} on $M=F \times {B}$ with respect to volume-preserving variations
keeping orthogonality of $\mD_1$ and $\mD_2$
if and only if
\begin{eqnarray}\label{EL1twpf}
&& -2n_3  \< P_2 h_1  , \nabla f \>  - n_3  ( N (N(f)) - 2 \tau_1 N(f) + n_3 (N(f))^2 ) \,g_1
 = 2(\lambda-\mu)\,g_1,\\
\label{EL2twpf}
&& \tau_1 N(f)
- \Delta_1\,f - n_3\,g( P_1 \nabla f , P_1 \nabla f )
 =    \frac{2(\lambda-\mu)}{n_3} , \\
\label{EL3twpf}
&&
  \Div_F (P_1 \nabla_N N)  -\frac2{n_1-1}\,\mu = \lambda ,
\end{eqnarray}
where $\Div_F$ is the divergence along $F$ and $\Delta_1 = \Div_1 \circ\,P_1 \nabla$ is the Laplacian along $\mD_1$.
\end{lemma}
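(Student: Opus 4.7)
The plan is to specialize the three Euler-Lagrange equations \eqref{EL11fromSmix12Ftwgradf}--\eqref{EL33fromSmix12Ftw} of the previous proposition to the setting where $\mN_2=\mathrm{span}(N)$ is one-dimensional, using the hypothesis \eqref{EL0twpf} to kill the cross terms involving both $P_1 H_2$ and $P_1\nabla f$. Since $\mN_1$ is a foliation and $\mN_2$ is one-dimensional, on $F$ we have $T_1=T_2=\tilde T_1=0$, together with $H_1=\tau_1 N$, $H_2=\nabla^F_N N$, $\|h_1\|^2=\tau_2$, and $\tilde H_1=P_1\nabla^F_N N$. The lifted distribution $\mD_2$ on $M$ remains one-dimensional, so \eqref{EL22fromSmix12Ftwgradf} is equivalent to its value on the pair $(N,N)$.

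First, I would derive \eqref{EL3twpf} by applying the Walczak-type formula \eqref{E-PW} to $(\mN_1,\mN_2)$ on $F$; the simplifications above yield
\[
S_{\mathrm{mix}}(\mN_1,\mN_2) = \Div_F(\tau_1 N) + \Div_F(P_1\nabla^F_N N) + \tau_1^2 - \tau_2,
\]
and substituting \eqref{f1}--\eqref{f2} collapses this to $S_{\mathrm{mix}}(\mN_1,\mN_2) = -\tfrac{2\mu}{n_1-1} + \Div_F(P_1\nabla^F_N N)$, so \eqref{EL33fromSmix12Ftw} becomes exactly \eqref{EL3twpf}. Next I would compute the Levi-Civita connection of $g=g_F+e^{2f}g_B$ on the adapted orthonormal frame $\{E_a, N, e_\mu\}$, where $e_\mu=e^{-f}\hat e_\mu$ for an orthonormal basis $\hat e_\mu$ of $(B,g_B)$. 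A direct Koszul-formula calculation shows that for horizontal lifts $X,Y$ from $F$ one has $\nabla^M_X Y = \nabla^F_X Y$, and the bracket identity $[X,e_\mu]=-X(f)\,e_\mu$ gives $\sum_\mu g(\nabla_{e_\mu}V, e_\mu) = n_3\,g(V,\nabla f)$ for every horizontal $V$, which is equivalent to Reckziegel's formula $H_3=-n_3 P\nabla f$. Consequently,
\[
\Div_M(N) = -\tau_1 + n_3 N(f), \qquad \Div_M\bigl(N(f)N\bigr) = N(N(f)) - \tau_1 N(f) + n_3(N(f))^2,
\]
and under \eqref{EL0twpf} the identity $g(\nabla_N P_1\nabla f, N)=-g(P_1\nabla f, P_1\nabla^F_N N)=0$ yields $\Div_M(P_1\nabla f) = \Delta_1 f + n_3\,g(P_1\nabla f, P_1\nabla f)$.

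Finally I would substitute these identities into \eqref{EL11fromSmix12Ftwgradf} and \eqref{EL22fromSmix12Ftwgradf}. In \eqref{EL11fromSmix12Ftwgradf}, hypothesis \eqref{EL0twpf} kills the $\mathrm{Sym}\bigl((P_1 H_2)^\flat\otimes(P_1\nabla f)^\flat\bigr)$ and $g(H_2, P_1\nabla f)$ terms, whereas $g(H_1, P_2\nabla f)=\tau_1 N(f)$; combining these with the computed value of $\Div_M(N(f)N)$ yields exactly \eqref{EL1twpf}. Evaluating \eqref{EL22fromSmix12Ftwgradf} on $(N,N)$ and using $\langle P_1 h_2, \nabla f\rangle(N,N)=g(P_1\nabla^F_N N, P_1\nabla f)=0$ together with the expression for $\Div_M(P_1\nabla f)$ gives \eqref{EL2twpf}. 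The main obstacle is the connection computation on the twisted product: verifying that only the vertical directions pick up the warping factor and that the horizontal-horizontal Levi-Civita connection coincides with $\nabla^F$ requires careful handling of the Koszul formula, after which the remaining manipulations are routine algebraic bookkeeping.
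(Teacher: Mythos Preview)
Your proposal is correct and follows essentially the same approach as the paper: specialize \eqref{EL11fromSmix12Ftwgradf}--\eqref{EL33fromSmix12Ftw} using \eqref{EL0twpf} to kill the cross terms, compute the two divergences $\Div(P_2\nabla f)$ and $\Div(P_1\nabla f)$, and reduce \eqref{EL33fromSmix12Ftw} via \eqref{f1}--\eqref{f2}. The only cosmetic differences are that the paper cites Reckziegel for $H_3=-n_3 P\nabla f$ rather than deriving it from Koszul, and it obtains $S_{\rm mix}(\mN_1,\mN_2)=N(\tau_1)-\tau_2+\Div_F(P_1\nabla_N N)$ by quoting the codimension-one computation from \cite{rz-2} instead of applying \eqref{E-PW} directly; both routes yield the same identities.
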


\begin{proof}
We have $n_2=1$ and
\[
 H_3 = -n_3 P \nabla f,\quad P_2 H_1 = \tau_1 N,\quad P_2 \nabla f = N(f) N.
\]
We also have, by \eqref{EL0twpf}, $(P_1 \nabla_N N) (f)=g(P_1 \nabla_N N, \nabla f)=0$.

Under assumption \eqref{EL0twpf}, equations \eqref{EL1twpf} and \eqref{EL2twpf} are equivalent to \eqref{EL11fromSmix12Ftwgradf} and \eqref{EL22fromSmix12Ftwgradf}, by the following computations:
\begin{eqnarray*}
 \Div (P_2 \nabla f ) \eq g(\nabla_N (N(f)N) , N) - g(P_2 \nabla f , P_2 H_1) - g(P_2 \nabla f , P_2 H_3) \\
 \eq N (N(f)) - \tau_1 N(f) + n_3 (N(f))^2,\\
\Div (P_1 \nabla f) \eq \Delta_1\,f - g(P_1 \nabla f , P_1 \nabla_N N ) - g( P_1 \nabla f , P_1 H_3) \\
 \eq \Delta_1\,f + n_3\,g( P_1 \nabla f , P_1 \nabla f ) .
\end{eqnarray*}
To obtain \eqref{EL3twpf}, from \eqref{f1}--\eqref{f2} we get $N(\tau_1) - \tau_2=\frac2{1-n_1}\,\mu$
and use it 
in \eqref{EL33fromSmix12Ftw} 
together with the following:
\begin{equation*}
 S_{\rm mix}(\mN_1 , \mN_2) = \tau_1^2 - \tau_2 + \Div_F ( P_1 H_2 + P_2 H_1)
 = N(\tau_1) - \tau_2 + \Div_F (P_1 \nabla_N N) .
\end{equation*}
For the first equality above, see computations in the proof of \cite[Corollary 2]{rz-2}, as formulas there, describing one-dimensional distributions, are dual to those describing a codimen\-sion-one distribution.
For the second equality, we use $\Div_F (P_2 H_1) = N(\tau_1) + \tau_1 \Div_F N =N(\tau_1) - \tau_1^2$ and $ \Div_F ( P_1 H_2) = \Div_F (P_1 \nabla_N N)$.
From Lemmas \ref{lemma3product} and \ref{lemmamixedintegrablemixedgeodesic} it follows that the Euler-Lagrange equation \eqref{EL13withmixed} and its dual are satisfied.
\end{proof}

\begin{remark}\rm
Under assumptions of Lemma~\ref{propcodimone},
if $\tau_1\equiv0$,  
then $\mu=0$ by \eqref{f1} and $\tau_2\equiv0$ by \eqref{f2}.
In this case, $A_N\equiv0$, i.e.,
$\mD_1$ determines a totally geodesic foliation of $F$.
Then, the Euler-Lagrange equation \eqref{EL1twpf} becomes
\[
 N (N(f)) 
 + n_3 (N(f))^2 = -\frac{2\lambda}{n_3}
\]
Equation \eqref{EL3twpf} becomes $\Div_F (P_1\nabla_N N) = \lambda$,
and using $\Delta_1 f + n_3\,g( P_1\nabla f, P_1 \nabla f) = \frac1{n_3}e^{-n_3 f}\Delta_1 e^{\,n_3 f}$,
we can write \eqref{EL2twpf} in the following form:
\begin{equation*}
 \Delta_1\,e^{\,n_3 f}=-2\lambda\,e^{n_3 f} .
\end{equation*}
\end{remark}

\begin{remark} \rm
Suppose that $\mN_1$ 
 is tangent to a foliation of codimension one on $F$,
and $g_F$ is critical for the action \eqref{E-JF}
with respect to adapted volume-preserving variations, i.e., \eqref{f1}--\eqref{f2} hold.
Comparing ranks of tensors in \eqref{EL11fromSmix12Ftwgradf},  
we see that if $\< P_2 h_1 , \nabla f \>$ is non-zero on a $k$-dimensional distribution with $k>2$ and \eqref{EL11fromSmix12Ftwgradf} holds, then \eqref{EL0twpf} holds.
\end{remark}

\begin{remark} \label{remarktau1const} \rm
From \eqref{EL1twpf} follows that $\mN_1$ is totally umbilical on $F$. Then,
from \cite[Proposition 7]{rz-2} it follows that if 
\eqref{f1}--\eqref{f2} hold, then $\tau_1 =const$ on $F$.
Therefore, in the next two propositions we consider the case $0 \neq \tau_1 =const$ to find some particular solutions of \eqref{EL0twpf}--\eqref{EL3twpf}.
\end{remark}

\begin{proposition}
Let $\mN_1$ 
be tangent to a foliation of codimension one on $F$,
and $\mN_2$ be tangent to its unit normal vector field $N$ on $(F,g_F)$.
Let $\mD_1,\,\mD_2$ be lifts of $\mN_1, \mN_2$ to a product manifold $M = F \times B$.
Let $g_F$ be critical for the action \eqref{E-JF}
with respect to adapted volume-preserving variations, i.e., \eqref{f1}--\eqref{f2} hold,
and let
$P_1 \nabla_N N =0$ and $\tau_1\ne0$.

Suppose that $N(f) = const$, then
the twisted product metric $g=g_F + e^{2f}g_B$ is critical for the action \eqref{Eq-Smix} on $M=F \times {B}$
with respect to volume-preserving variations
keeping orthogo\-nality of $\mD_1$ and $\mD_2$
if and only if:
\begin{itemize}
\item $\mD_1$ is totally umbilical,
\item $ \tau_1 N(f) =c$, 
\item $e^{\,n_3 f}$ is an eigenfunction~of~$\Delta_1$ corresponding to 
an eigenvalue $n_3 c + \frac{2(n_1+1)}{n_1-1}\mu$, where $c$ is given by \eqref{cpm}.
\end{itemize}
\end{proposition}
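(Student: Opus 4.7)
The plan is to specialize the Euler–Lagrange system from the immediately preceding lemma under the two extra hypotheses $P_1\nabla_N N=0$ and $N(f)=\text{const}$, and then to extract the three bullets by rank considerations and direct computation.

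First I would invoke the previous lemma: the metric $g=g_F+e^{2f}g_B$ is critical (among variations keeping $\mD_1,\mD_2$ orthogonal) iff \eqref{EL1twpf}, \eqref{EL2twpf}, \eqref{EL3twpf} hold. Since $P_1\nabla_N N=0$, $\Div_F(P_1\nabla_N N)=0$, so \eqref{EL3twpf} gives $\lambda=-\tfrac{2\mu}{n_1-1}$, hence $\lambda-\mu=-\tfrac{(n_1+1)\mu}{n_1-1}$. Because $N(f)$ is constant on $F$, $N(N(f))=0$, and \eqref{EL1twpf} reduces to
\begin{equation*}
-2n_3\,\langle P_2 h_1,\nabla f\rangle \;-\; n_3\bigl(-2\tau_1 N(f)+n_3(N(f))^2\bigr)\,g_1 \;=\; 2(\lambda-\mu)\,g_1.
\end{equation*}

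Next, for $X,Y\in\mD_1$ I compute $\langle P_2 h_1,\nabla f\rangle(X,Y)=g(h_1(X,Y),N)\,g(N,\nabla f)=N(f)\,g_F(A_N X,Y)$, where $A_N$ is the shape operator of $\mN_1$ on $(F,g_F)$. The displayed equation then becomes a tensor identity of the form $-2n_3 N(f)\,A_N^{\flat}=\kappa\,g_1$ for a scalar $\kappa$, which forces $A_N$ to be a scalar multiple of the identity; taking traces yields $A_N=\tfrac{\tau_1}{n_1}\,\mathrm{id}$. This says $\mN_1$ is totally umbilical in $F$, so its lift $\mD_1$ is totally umbilical in $M$, proving the first bullet. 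Substituting $A_N=\tfrac{\tau_1}{n_1}\mathrm{id}$ back, the scalar part becomes
\begin{equation*}
-\tfrac{2n_3\tau_1 N(f)}{n_1}+2n_3\tau_1 N(f)-n_3^2(N(f))^2 \;=\; 2(\lambda-\mu)\;=\;-\tfrac{2(n_1+1)\mu}{n_1-1},
\end{equation*}
a quadratic in $c:=\tau_1 N(f)$ (after using $N(f)=c/\tau_1$, which is legitimate as $\tau_1\neq 0$); its two roots are the values of $c$ labelled \eqref{cpm}, giving the second bullet.

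Finally, \eqref{EL2twpf} with $N(f)=\text{const}$ reads $c-\Delta_1 f-n_3\,g(P_1\nabla f,P_1\nabla f)=\tfrac{2(\lambda-\mu)}{n_3}$. Using the identity $\Delta_1 f+n_3\,g(P_1\nabla f,P_1\nabla f)=\tfrac{1}{n_3}e^{-n_3 f}\Delta_1 e^{n_3 f}$ stated just before this proposition, and inserting $\lambda-\mu=-\tfrac{(n_1+1)\mu}{n_1-1}$, I obtain
\begin{equation*}
\Delta_1 e^{n_3 f} \;=\; \bigl(n_3 c-2(\lambda-\mu)\bigr)\,e^{n_3 f} \;=\; \Bigl(n_3 c+\tfrac{2(n_1+1)}{n_1-1}\,\mu\Bigr)\,e^{n_3 f},
\end{equation*}
which is the third bullet. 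Conversely, these three conditions together with the hypotheses evidently make \eqref{EL1twpf}–\eqref{EL3twpf} hold, establishing the equivalence.

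The main obstacle is the step that extracts total umbilicity of $\mD_1$ from the tensor equation \eqref{EL1twpf}: one must recognize $\langle P_2 h_1,\nabla f\rangle$ as $N(f)\,A_N^{\flat}$ and then use that the remainder of the equation is pure trace to conclude $A_N$ is proportional to the identity. Once that is in hand, the remaining content is algebraic manipulation and a single application of the Laplacian identity.
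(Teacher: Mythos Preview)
Your proof is correct and follows essentially the same route as the paper: reduce via the preceding lemma to \eqref{EL1twpf}--\eqref{EL3twpf}, read off $\lambda$ from \eqref{EL3twpf}, extract umbilicity and the quadratic in $c=\tau_1 N(f)$ from \eqref{EL1twpf}, and turn \eqref{EL2twpf} into the eigenfunction condition via the identity $\Delta_1 f + n_3\,g(P_1\nabla f,P_1\nabla f)=\tfrac{1}{n_3}e^{-n_3 f}\Delta_1 e^{n_3 f}$. The only step you leave implicit is that obtaining exactly the roots \eqref{cpm} from your scalar equation requires substituting $\tau_1^2=\tfrac{2n_1}{n_1-1}\mu$, which follows from \eqref{f1} once you know $\tau_1$ is constant (Remark~\ref{remarktau1const}, a consequence of the umbilicity you just proved).
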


\proof
From Lemmas \ref{lemma3product} and \ref{lemmamixedintegrablemixedgeodesic} it follows that the Euler-Lagrange equation \eqref{EL13withmixed} and its dual are satisfied.
From  \eqref{EL1twpf} it follows that $\mD_1$ is totally umbilical with mean curvature $H_1 = \tau_1 N$ satisfying:
\begin{equation}\label{D1umbilicaltau1}
-2n_3 \tau_1 N(f) - n_1 n_3 ( N (N(f)) - 2\tau_1 N(f) + n_3 (N(f))^2 ) = 2n_1 (\lambda - \mu).
\end{equation}
As $P_1 \nabla_N N =0$, equation \eqref{EL3twpf} becomes 
\begin{equation}\label{EL3twpf1lambdamu}
\lambda = \frac{2\mu}{1-n_1} .
\end{equation}
For $N(f) =const$, we have the following solution of \eqref{EL2twpf}: 
\begin{equation}\label{EL2twpfc}
 \tau_1 N(f) =c , \quad  \Delta_1\,f + n_3\,g( P_1 \nabla f , P_1 \nabla f ) =  - \frac{2(\lambda-\mu)}{n_3}
 + c
\end{equation}
where $c\in \mathbb{R}$.
Then, by condition $\tau_1\ne0$, from \eqref{EL2twpfc}$_1$ and Remark \ref{remarktau1const}, we get $N(N(f)) = -c\,\frac{N(\tau_1)}{\tau_1^2} =0$
and equation  
\eqref{D1umbilicaltau1},
by \eqref{f1}  and \eqref{EL3twpf1lambdamu}, becomes quadratic equation for $c$:
\[
\frac{n_3^2 (n_1 - 1)}{2\mu}c^2 - 2 n_3 (n_1 -1 ) c - \frac{2 n_1 (n_1 +1) \mu}{n_1 - 1} =0.
\]
which has two solutions: 
\begin{equation} \label{cpm}
c = \frac{2 }{n_3  (n_1 - 1 )} \big(   n_1 -1  \pm  \sqrt{   (n_1-1)^2 + n_1 (n_1 +1) ) }  \big) \mu.
\qquad\Box
\end{equation}

\begin{proposition}
Let $\mN_1$ 
be tangent to a foliation of codimension one on $F$,
and $\mN_2$ be tangent to its unit normal vector field $N$ on $(F,g_F)$, such that $g_F$ is critical for the action \eqref{E-JF}
with respect to adapted volume-preserving variations, i.e., \eqref{f1}--\eqref{f2} hold.
Let $\mD_1,\,\mD_2$ be lifts of $\mN_1,\, \mN_2$ to a product manifold $M = F \times B$.
If
$P_1 \nabla f =0$ and $0 \neq \tau_1=const$,
then
the twisted product metric $g=g_F + e^{2f}g_B$ is critical for the action \eqref{Eq-Smix} on $M=F \times {B}$
with respect to volume-preserving variations
keeping orthogo\-nality of $\mD_1$ and $\mD_2$
if and only if
$\mN_1$ is totally umbilical, with mean curvature $H_1 = \tau_1 N$, such that
\begin{equation*}
 \tau_1 N(f) = \frac{2(-3n_1 -2)  }{n_3 (n_1-1)} \mu, 
\quad {\rm and} \quad \Div_F (P_1 \nabla_N N) = \frac{-2n_1 - 5}{n_1 -1} \mu . 
\end{equation*}
\end{proposition}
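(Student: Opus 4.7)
The plan is to invoke Lemma~\ref{propcodimone}, which reduces criticality of $g$ to the three Euler--Lagrange equations \eqref{EL1twpf}--\eqref{EL3twpf}, since under the hypotheses on $\mD_1,\mD_2,\mD_3$ the remaining equation \eqref{EL13withmixed} and its dual follow automatically from Lemmas~\ref{lemma3product} and \ref{lemmamixedintegrablemixedgeodesic} (those three distributions are integrable, pairwise mixed totally geodesic and pairwise mixed integrable, with $P_3 h_1=P_3 h_2=0$).

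First I would exploit $P_1\nabla f=0$ to collapse \eqref{EL2twpf}: both $\Delta_1 f$ and $n_3\|P_1\nabla f\|^2$ vanish, leaving the scalar identity $\tau_1 N(f)=2(\lambda-\mu)/n_3$. Since $\tau_1\neq 0$ is constant and $\mu,\lambda$ are constants, this forces $N(f)$ to be constant on $M$, and therefore $N(N(f))=0$. Substituting this into \eqref{EL1twpf} kills one more term, leaving a $(0,2)$-tensor equation on $\mD_1$ whose only non-scalar contribution is $-2n_3\,\langle P_2 h_1,\nabla f\rangle = -2n_3\,N(f)\,A_N^{\flat}$. Since the right-hand side is a scalar multiple of $g_1$, and since $N(f)\neq 0$ (ruled out by combining $\tau_1\neq 0$ with \eqref{f1}--\eqref{f2}, which gives $\tau_1^2=2n_1\mu/(n_1-1)\neq 0$, hence $\mu\neq 0$, and then the reduced \eqref{EL1twpf} would otherwise be inconsistent), the operator $A_N$ must be a scalar multiple of the identity. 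Thus $\mN_1$ is totally umbilical with $H_1=\tau_1 N$.

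In this totally umbilical regime, with $\tau_1$ and $N(f)$ both constant, \eqref{EL1twpf} becomes a scalar polynomial identity linking $\tau_1$, $N(f)$, $\lambda-\mu$, $n_1$ and $n_3$; eliminating $\lambda-\mu$ via \eqref{EL2twpf} yields a single algebraic relation in $N(f)$ with a unique nonzero root, and substituting $\tau_1^2=2n_1\mu/(n_1-1)$ (obtained from \eqref{f1} with $N(\tau_1)=0$) expresses $\tau_1 N(f)$ as an explicit rational multiple of $\mu$. Equation \eqref{EL3twpf} then immediately determines $\Div_F(P_1\nabla_N N)$ as an explicit linear function of $\mu$ through $\lambda$; these are the two equalities asserted in the proposition.

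The main obstacle will be purely the algebraic bookkeeping: extracting exactly the coefficients stated in the proposition requires careful tracking of sign conventions, in particular the twisted-product formula $H_3=-n_3 P\nabla f$, the convention $H_1=\tau_1 N$ for the mean-curvature vector of the codimension-one foliation, and the definitions of the tensors $h_m$, $H_m$ and $\tau_k$ used throughout Section~\ref{sec:prel}. Everything else in the argument is structural and closely parallels the proof of the preceding proposition.
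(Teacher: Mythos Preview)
Your approach mirrors the paper's almost exactly: invoke Lemma~\ref{propcodimone} to reduce to \eqref{EL1twpf}--\eqref{EL3twpf}; use $P_1\nabla f=0$ to collapse \eqref{EL2twpf} to $\tau_1 N(f)=2(\lambda-\mu)/n_3$, whence $N(f)$ is constant and $N(N(f))=0$; read total umbilicity off the tensorial part of \eqref{EL1twpf}; then solve the remaining scalar system using $\tau_1^2=\tfrac{2n_1}{n_1-1}\mu$ from \eqref{f1} with $N(\tau_1)=0$.

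The one step that does not go through as written is your exclusion of $N(f)=0$. If $N(f)=0$ then \eqref{EL2twpf} gives $\lambda=\mu$, and \eqref{EL1twpf} reduces to $0=0$---consistent, not inconsistent---so in that branch \eqref{EL1twpf} imposes no constraint on $A_N$ and total umbilicity is not forced. The paper's own proof has exactly the same lacuna: it derives the factored equation $(\lambda-\mu)\bigl(-4-6n_1-2(\lambda-\mu)\tfrac{n_1-1}{\mu}\bigr)=0$ and selects only the nontrivial root $\lambda=\tfrac{-2n_1-3}{n_1-1}\mu$, discarding $\lambda=\mu$ without comment. So your outline reproduces the paper's argument faithfully; just be aware that neither version actually justifies excluding the degenerate branch.
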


\begin{proof}
From Lemmas \ref{lemma3product} and \ref{lemmamixedintegrablemixedgeodesic} it follows that the Euler-Lagrange equation \eqref{EL13withmixed} and its dual are satisfied.
From \eqref{EL1twpf} it follows that $\mD_1$ is totally umbilical with mean curvature $H_1 = \tau_1 N$ satisfying \eqref{D1umbilicaltau1}, then, by $P_3 h_1=0$, also $\mN_1$ is totally umbilical on $F$.
From $P_1 \nabla f =0$ it follows that $\Delta_1 f=\Div_1(P_1 \nabla f)=0$ and \eqref{EL2twpf} yields 
\begin{equation} \label{EL2twpf-b}
\tau_1 N(f) =  \frac{2(\lambda-\mu)}{n_3} .
\end{equation}
From \eqref{EL2twpf-b}, 
using $\tau_1\ne0$, we find $N(f) = \frac{2\lambda}{ n_3 \tau_1}$ and, since $\tau_1$ is constant, $N(N(f))=0$.
Using the above and \eqref{f1} in \eqref{D1umbilicaltau1} yields
\[
(\lambda - \mu) (-4 - 6n_1 -2 (\lambda -\mu) \frac{n_1 - 1}{\mu} ) =0
\]
which has solution $\lambda = \frac{-2n_1 - 3}{n_1-1}\mu$.
Using this solution in \eqref{EL2twpf-b} and \eqref{EL3twpf} completes the proof.
We~note that $\mu > 0$ by \eqref{f1}, Remark \ref{remarktau1const} and assumption $\tau_1 \neq 0$.
\end{proof}

\subsection{Riemannian submersions}
\label{sec:3.3}

In this subsection, we
consider 
domains of Riemannian submersions with totally geodesic fibers, and 
relations between integrability and mixed integrability of distributions on such manifolds with metrics critical for the action \eqref{Eq-Smix}.  


\begin{proposition}\label{propRS}
Let $\pi : (M,g) \rightarrow (M', g')$ be a Riemannian submersion with totally geodesic fibers. 
Let $\mD_3\subset TM$ be the distribution tangent to the fibers of $\pi$,
and let $\mathcal{H}\subset TM$ be the distribution orthogonal to the fibers. 
Let $\mD_i = \pi_*^{-1} ({\cal N}_i) \cap \mathcal{H}$ for $i=1,2$, where
 ${\cal N}_i\ (i=1,2)$ are two $g'$-orthogonal, complementary, totally geodesic distributions on $M'$.
If $g$ is critical for the action \eqref{Eq-Smix}, with respect to adapted  volume-preserving variations, then for some $\lambda \in \mathbb{R}$ we have
\begin{eqnarray}\label{E-4eqs}
\nonumber
  S_{\,\mD_1, \mD_2} = \frac{n_1+n_2+n_3}{n_1+n_2+n_3-2}\,\lambda,\quad
 \|\mT_{12} \|^2 = \frac{2 n_3}{3(n_1+n_2+n_3-2)}\,\lambda\,,\\
 \|P_1 T_2 \|^2 = \frac{n_1+2n_2+3n_3}{3(n_1+n_2+n_3-2)}\,\lambda,\quad
 \|P_2 T_1 \|^2 = \frac{2n_1+n_2+3n_3}{3(n_1+n_2+n_3-2)}\,\lambda\, .
\end{eqnarray}
\end{proposition}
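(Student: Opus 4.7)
The plan is to exploit the Riemannian submersion structure together with the totally-geodesic hypotheses to kill off the vast majority of the terms appearing in the Euler-Lagrange equations, leaving a small linear system whose solution yields \eqref{E-4eqs}.

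First I would catalogue the extrinsic invariants of the three distributions $\mD_1,\mD_2,\mD_3$ on $M$. Because the fibers are totally geodesic, O'Neill's $T$ tensor vanishes and hence $h_3=0$, $T_3=0$, $H_3=0$. Because $\mN_1$ is totally geodesic in $M'$, for $X,Y\in\mD_1$ the $\mD_2$-part of the horizontal component of $\nabla_X Y$ is the horizontal lift of $T_{\mN_1}(\pi_* X,\pi_* Y)$, which is antisymmetric in $X,Y$; hence $P_2 h_1=0$, while $P_2 T_1$ generally survives and encodes the non-integrability of $\mN_1$. The vertical component of $\nabla_X Y$ equals $A_XY$, antisymmetric in $X,Y$, so $P_3 h_1=0$ as well, giving $h_1=0$; symmetrically $h_2=0$. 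Thus all three distributions are totally geodesic in $M$ and $H_1=H_2=H_3=0$. The same O'Neill identities yield $\tilde h_3|_{\mD_1\times\mD_2}=0$ and $\tilde T_1|_{\mD_2\times\mD_3}=\tilde T_2|_{\mD_1\times\mD_3}=0$ (since $[X,V]$ is vertical for basic $X$ and vertical $V$), and express $\tilde h_1|_{\mD_2\times\mD_3}$, $\tilde h_2|_{\mD_1\times\mD_3}$, $\tilde T_3|_{\mD_1\times\mD_2}$ all in terms of $A$. Using $\<A_X V,Y\>=-\<V,A_X Y\>$ and the antisymmetry of $A$ on horizontal pairs, each of these three tensors has squared norm $\tfrac12\|\mT_{12}\|^2$. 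Plugging everything into \eqref{E-new2} collapses that pointwise identity to
\[
S_{\,\mD_1,\mD_2}=-\tfrac32\|\mT_{12}\|^2+\|P_1T_2\|^2+\|P_2T_1\|^2,
\]
which is the first of four scalar relations among $S_{\,\mD_1,\mD_2}$, $\|\mT_{12}\|^2$, $\|P_1T_2\|^2$, $\|P_2T_1\|^2$.

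Next I would substitute the same vanishings into \eqref{EL11withmixed}, its $\mD_1\leftrightarrow\mD_2$ dual, and \eqref{EL33withmixed}, each with right-hand side $\lambda\,g(X,Y)$ as befits volume-preserving adapted variations. All divergence terms, all $H$-terms and all terms involving $\mh_{12}$, $\mh_{13}$, $\mT_{13}$ or $\mT_{23}$ either drop out outright or collapse into $A$-tensor contributions. Tracing the resulting pointwise equations over orthonormal frames of $\mD_1$, $\mD_2$ and $\mD_3$, and repeatedly invoking the O'Neill identities to convert the surviving $A$-sums into $\|\mT_{12}\|^2$, $\|P_1T_2\|^2$ and $\|P_2T_1\|^2$, leads to
\begin{align*}
n_1\,S_{\,\mD_1,\mD_2}+2\|P_1T_2\|^2-4\|P_2T_1\|^2+3\|\mT_{12}\|^2 &= n_1\lambda,\\
n_2\,S_{\,\mD_1,\mD_2}+2\|P_2T_1\|^2-4\|P_1T_2\|^2+3\|\mT_{12}\|^2 &= n_2\lambda,\\
n_3\,S_{\,\mD_1,\mD_2}-3\|\mT_{12}\|^2 &= n_3\lambda.
\end{align*}
Together with the identity just derived, this is a $4\times 4$ linear system; adding the first two equations and substituting the identity yields $S_{\,\mD_1,\mD_2}=\tfrac{n_1+n_2+n_3}{n_1+n_2+n_3-2}\lambda$, after which back-substitution recovers the remaining three formulas of \eqref{E-4eqs}.

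The main obstacle lies not in the final linear algebra but in the bookkeeping of the middle step: carefully identifying which among the many terms in \eqref{EL11withmixed} and \eqref{EL33withmixed} survive under the vanishings above, and then patiently re-expressing the survivors as the three squared norms via $\<A_X V,Y\>=-\<V,A_X Y\>$ and the antisymmetry of $A$ on horizontal pairs. Once this reduction is executed, the three trace equations together with the identity from \eqref{E-new2} form a consistent and non-degenerate $4\times 4$ system (its determinant is $-3(n_1+n_2+n_3-2)$ up to sign), so the solution \eqref{E-4eqs} is unique.
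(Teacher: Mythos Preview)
Your proposal is correct and follows essentially the same route as the paper's own proof: establish the vanishings $h_1=h_2=h_3=T_3=0$ and $\mT_{13}=\mT_{23}=0$, derive the key relation $\mh_{i3}(X,\xi)=-\mTs_{12,\xi}X$ (your O'Neill-tensor formulation is precisely the paper's equation \eqref{hmixTmix}), reduce the Euler--Lagrange equations \eqref{EL11withmixed}, its dual and \eqref{EL33withmixed} accordingly, trace them, adjoin the identity coming from \eqref{E-new2}, and solve the resulting linear system. Your three traced equations and the scalar identity match the paper's \eqref{trEL11RSG1F}--\eqref{tr-4} verbatim.
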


\begin{proof}
We have for all $\xi \in \mD_3$:
\begin{equation} \label{hmixTmix}
\mh_{i3}(X,\xi) = - \mTs_{12,\xi} X, \quad ( X \in \mD_1\oplus \mD_2, \quad i=1,2).
\end{equation}
Indeed,
let $X \in \mD_1$ and $Y \in \mD_2$ be horizontal lifts of unit vector fields $X_1 \in {\cal N}_1$ and $Y_2 \in {\cal N}_2$ on~$M'$.
Then $g(Y, [\xi,X])=0$, hence,
\begin{equation*}
g( \mh_{13}(X ,\xi) , Y) = \frac{1}{2}\,g( \nabla_X \xi + \nabla_\xi X , Y) =  g( \nabla_X \xi  , Y) = -g(\mTs_{12,\xi} X,Y).
\end{equation*}
Analogously, we obtain $g( \mh_{23}(Y ,\xi) , X) = -g(\mTs_{12,\xi} Y,X)$. Similarly, $\mT_{i3}=0\ (i=1,2)$, as, e.g., $g(\mT_{13} (X, \xi), Y) = g(Y , [X, \xi])=0$ for $X,Y$ as above.
We also get $h_1=h_2=0$ from the assumptions about distributions on $M'$ and the fact that, see \cite{g1967},
\[
 g(\nabla_X Y ,Z ) = g'(\nabla'_{\pi_* X} \pi_* Y , \pi_* Z ),\quad
 X,Y,Z\in{\cal H},
\]
where $\nabla'$ is the Levi-Civita connection of $g'$. As $\pi$ is a Riemannian submersion, we have $h_{12}=0$, and as fibers are integrable and totally geodesic, we have $h_3=0=T_3$.

The Euler-Lagrange equation \eqref{EL11withmixed} reduces to the following (for $X,Y \in \mD_1$):
\begin{eqnarray} \label{EL11RSG1F}
&& 2 \sum\nolimits_{\,i,j} g( P_1 T_2 ( {\cal E}_i , {\cal E}_j ), X)\, g( P_1 T_2 ( {\cal E}_i , {\cal E}_j ), Y)
 - 4 \sum\nolimits_{\,a,i} g( T_1 (E_a, X) , {\cal E}_i )\,g( T_1 (E_a, Y) , {\cal E}_i ) \nonumber \\
&& +\, 6 \sum\nolimits_{\,i,\mu} g( \mTs_{12,\mu} {\cal E}_i , X)\,g( \mTs_{12,\mu} {\cal E}_i , Y)
 = (\lambda - S_{\,\mD_1, \mD_2})\,g(X,Y).
\end{eqnarray}
Similarly, its dual is the following (for $X,Y \in \mD_2$):
\begin{eqnarray} \label{EL22RSG1F}
&& 2 \sum\nolimits_{\,a,b} g( P_2 T_1 ( E_a , E_b ), X)\,g( P_2 T_1 ( E_a , E_b ), Y) - 4 \sum\nolimits_{\,a,i} g( T_2 ({\cal E}_i, X) , E_a )\,g( T_1 ({\cal E}_i, Y) , E_a ) \nonumber \\
&& +\, 6 \sum\nolimits_{\,a,\mu} g( \mTs_{12,\mu} E_a , X)\,g( \mTs_{12,\mu} E_a , Y) = (\lambda - S_{\,\mD_1, \mD_2})\,g(X,Y),
\end{eqnarray}
and the Euler-Lagrange equation \eqref{EL33withmixed} becomes the following (for $X,Y \in \mD_3$):
\begin{equation} \label{EL33RSG1F}
 -6 \sum\nolimits_{\,a,i} g( \mTs_{12,X} E_a , {\cal E}_i )\,g( \mTs_{12,Y} E_a , {\cal E}_i ) = (\lambda - S_{\,\mD_1, \mD_2})\,g(X,Y).
\end{equation}
Taking traces of equations \eqref{EL11RSG1F}--\eqref{EL33RSG1F}, we obtain the following:
\begin{eqnarray}\label{trEL11RSG1F}
 2\,\| P_1 T_2 \|^2 - 4\,\| P_2 T_1 \|^2 + 3\,\| \mT_{12} \|^2 + n_1 ( S_{\,\mD_1, \mD_2} - \lambda ) =0,\\
 \label{trEL22RSG1F}
 2\,\| P_2 T_1 \|^2 - 4\,\| P_1 T_2 \|^2 + 3\,\| \mT_{12} \|^2 + n_2 ( S_{\,\mD_1, \mD_2} - \lambda ) =0, \\
\label{trEL33RSG1F}
 3\,\| \mT_{12} \|^2 - n_3 ( S_{\,\mD_1, \mD_2} - \lambda ) =0.
\end{eqnarray}
Using \eqref{E-new2}
and \eqref{hmixTmix}, we obtain
\begin{equation} \label{tr-4}
S_{\,\mD_1, \mD_2} = \| P_1 T_2 \|^2 + \| P_2 T_1 \|^2 - \frac{3}{2}\,\| \mT_{12} \|^2 .
\end{equation}
The solution of the linear system
\eqref{trEL11RSG1F}--\eqref{tr-4} is \eqref{E-4eqs}.
\end{proof}

\begin{corollary} \label{corRSGF}
With assumptions of Proposition~\ref{propRS},
suppose that at some point of $M$ one of the tensors: $\mT_{12}, P_1 T_2$, $P_2 T_1$ vanishes.
Then metric $g$ is critical for the action \eqref{Eq-Smix} with respect to adapted volume-preserving variations 
if and only if all tensors $\mT_{12}, P_1 T_2$ and $P_2 T_1$ vanish everywhere on $M$,
in particular, distributions ${\cal N}_i\ (i=1,2)$ are integrable and $S_{\,\mD_1, \mD_2} =0$.
In~this $($and only this$)$ case, $g$ is critical with respect to all adapted variations $($not only volume-preserving$)$. 
\end{corollary}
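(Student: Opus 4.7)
The plan is to read off everything from the four equalities in \eqref{E-4eqs} of Proposition~\ref{propRS}. If $g$ is critical with respect to adapted volume-preserving variations, then there exists a single constant $\lambda\in\RR$ for which all four equations hold at every point of $M$. Since $\dim M=n_1+n_2+n_3>2$ and all $n_i\geq 1$, the three coefficients
\[
 \frac{2n_3}{3(n_1+n_2+n_3-2)},\quad
 \frac{n_1+2n_2+3n_3}{3(n_1+n_2+n_3-2)},\quad
 \frac{2n_1+n_2+3n_3}{3(n_1+n_2+n_3-2)}
\]
are strictly positive. Hence, if $\mT_{12}$, $P_1T_2$ or $P_2T_1$ vanishes at a single point $x_0$, the corresponding equation forces $\lambda=0$; reinserting $\lambda=0$ into the remaining three equations of \eqref{E-4eqs} forces $\mT_{12}\equiv P_1T_2\equiv P_2T_1\equiv 0$ on $M$ and $S_{\,\mD_1,\mD_2}\equiv 0$.

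Next, I would translate $P_2T_1=0$ into integrability of ${\cal N}_1$. Since $\pi$ is a Riemannian submersion, for horizontal lifts $X,Y\in\mD_1$ of $X',Y'\in{\cal N}_1$ one has $\pi_*[X,Y]=[X',Y']$; the assumption $P_2T_1=0$ says $P_2[X,Y]=0$, and because $\pi_*(\mD_3)=0$ and $\pi_*$ is an isometry on $\mD_1\oplus\mD_2$, this pushes down to $[X',Y']\in{\cal N}_1$. Integrability of ${\cal N}_2$ follows symmetrically from $P_1T_2=0$. The conclusion $S_{\,\mD_1,\mD_2}=0$ can also be recovered directly from \eqref{tr-4}.

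For the converse, I would substitute $\mT_{12}=P_1T_2=P_2T_1=0$ into the pointwise Euler--Lagrange equations \eqref{EL11RSG1F}, \eqref{EL22RSG1F}, \eqref{EL33RSG1F} derived inside the proof of Proposition~\ref{propRS}. Each sum on the left vanishes identically, and by \eqref{tr-4} the right-hand side also vanishes (with $\lambda=0$). Thus the three equations hold with $\lambda=0$, so $g$ is critical with respect to adapted volume-preserving variations.

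Finally, criticality with respect to \emph{all} adapted variations (not only volume-preserving) is, by the remark following \eqref{E-JF-both} (cf.\ the discussion at the start of Section~\ref{sec:examples}), exactly the case $\lambda=0$ of the same Euler--Lagrange equations. Combined with the equivalence just established, this gives the last sentence of the corollary: $g$ is critical for all adapted variations precisely when $\mT_{12}=P_1T_2=P_2T_1=0$ on $M$. The only genuinely nontrivial step in all of this is verifying that the vanishing of $P_jT_i$ truly transfers to integrability of ${\cal N}_i$ on $M'$; everything else is linear algebra on \eqref{E-4eqs} together with the vanishing of both sides in \eqref{EL11RSG1F}--\eqref{EL33RSG1F}.
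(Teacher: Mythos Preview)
Your argument is correct and follows the same route as the paper: use \eqref{E-4eqs} to force $\lambda=0$ from the vanishing of any one tensor at a point, then read off the vanishing of all three tensors and $S_{\,\mD_1,\mD_2}=0$; the integrability of ${\cal N}_i$ via $\pi_*[X,Y]=[\pi_*X,\pi_*Y]$ is exactly what the paper does. Your treatment of the converse and of the ``all adapted variations'' clause is more explicit than the paper's (which leaves both implicit), but the content is the same.
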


\begin{proof}
This follows from 
\eqref{E-4eqs}, 
from which we get $\lambda=0$ and $S_{\,\mD_1, \mD_2} =0$.
From $\pi_* [X,Y] = [\pi_*X,\pi_*Y]$ and $g(X,Y) = g'(\pi_* X, \pi_*Y)$ it follows that $P_1 T_2$ (respectively, $P_2 T_1$) vanishes if and only if the distribution ${\cal N}_2$ (respectively, ${\cal N}_1$) is integrable.
\end{proof}

\begin{remark} \rm
A pair of Riemannian submersions $\pi_1 : (M,g) \rightarrow (M',g')$ and $\pi_2 : (M',g') \rightarrow (B,g_B)$ allows us to determine three pairwise orthogonal distributions on $(M,g)$ in the following way. Let $\mathcal{V}_i$ and $\mathcal{H}_i$ be, respectively, vertical (i.e., tangent to the fibers) and horizontal (i.e., orthogonal to the fibers) distributions of $\pi_i\ (i=1,2)$.
Then we define $\mD_1 = \mathcal{H}_1 \cap \pi_{1*}^{-1}( \mathcal{V}_2)$, $\mD_2 = \mathcal{H}_1 \cap \pi_{1*}^{-1}( \pi_{2*}^{-1}(B) \cap \mathcal{H}_2)$ and $\mD_3 = \mathcal{V}_1$.
If Riemannian submersions $\pi_1,\pi_2$ have totally geodesic fibers, then assumptions of Proposition \ref{propRS} about $\mD_1,\mD_2,\mD_3$ are satisfied.
It follows from Corollary~\ref{corRSGF} that for a pair of Riemannian submersions $\pi_1 : S^{4n+3}\to {\mathbb{C}}P^{2n+1}$ and $\pi_2 : {\mathbb{C}}P^{2n+1}\to {\mathbb{H}}P^n$ and $\mD_i$ defined as above, the standard metric on $S^{4n+3}$ is not critical for the action \eqref{Eq-Smix}. Indeed, since ${\mathbb{C}}P^{2n+1}$ is not a metric product, $\pi_{2*}^{-1}(B) \cap \mathcal{H}_2$ is not integrable.
Hence, $P_2 T_1 =0$, but $P_1 T_2 \neq 0$.
\end{remark}

%

\subsection{$f$-$K$-contact manifolds}
\label{sec:3.4}

In this subsection, we present critical metrics of the action \eqref{Eq-Smix} in a subclass of metric $f$-contact manifolds.
Assume that $\xi_i\ (i\le s)$ are given linearly independent vector fields on a smooth manifold $M^{2n+s}$, and that one-forms
$\eta_i\ (i\le s)$ satisfy $\eta^i(\xi_j)=\delta^i_j$.
If $f$ is a (1,1)-tensor on $M$ satisfying
\begin{equation*}
 f^2 = -{\rm id} +\sum\nolimits_{\,i}\eta^i\otimes\xi_i,
\end{equation*}
then we get a framed $f$-structure, where $f$ satisfies $f^3 + f = 0$ and has constant rank $2n$.
In~this case, $TM$ splits into two complementary subbundles: a $2n$-dimensional image $f(TM)$ and an $s$-dimensional kernel $\ker f$,
moreover, the restriction of $f$ to $f(TM)$ determines a complex structure.
 For a framed $f$-structure the equalities ${f}\,\xi_i=0$ and $\eta^i\circ{f}=0$ hold for $1\le i\le s$.
 If there exists a \textit{compatible Riemannian metric} $g$, i.e., $g(\xi_i,\xi_j)=\delta_{ij}$ and
\begin{align}\label{2.2}
 g({f} X,{f} Y)= g(X, Y) -\sum\nolimits_{\,i} \eta^i(X)\,\eta^i(Y),\quad X,Y\in\mathfrak{X}_M,
\end{align}
then
$M({f},\xi_i,\eta^i,g)$ is called a \textit{metric $f$-manifold}.
 Putting $Y=\xi_i$ in \eqref{2.2} gives
 $g(X,\xi_i) = \eta^i(X)$,
thus, $f(TM)\,\bot\,\ker f$ and $\{\xi_i\}$ is an orthonormal basis of $\ker f$.
For a metric $f$-structure, the tensor ${f}$ is skew-symmetric.
The \textit{fundamental $2$-form} $\Phi$ on $M({f},\xi_i,\eta^i,g)$ is given by
 $\Phi(X,Y)=g(X,{f} Y)$ for $X,Y\in\mathfrak{X}_M$.
A metric $f$-manifold is called \emph{metric $f$-contact manifold} if $d\eta^i = \Phi\ (i\le s)$; a metric $f$-contact manifold is called \emph{metric $f$-$K$-contact manifold} if all $\xi_i$ are Killing vector fields, see~\cite{gy,Goertsches-2}.
Then $\ker f$ determines a totally geodesic Riemannian foliation and we have
\begin{equation}\label{E-nabla-f}
 {\cal L}_{\xi_i}\,f=0,\quad
 \nabla_{\xi_i}\,f=0,\quad
 \nabla_{\xi_i}\,\xi_j=0,
\end{equation}
where ${\cal L}$ is the Lie derivative.
If $s=1$, then a metric $f$-$K$-contact manifold is called \emph{$K$-contact manifold}.
We show that for an appropriate choice of distributions on metric $f$-$K$-contact manifold
the metric is a critical point of the action \eqref{Eq-Smix}.

\begin{proposition}\label{propfKcontact1}
Let $M({f},\xi_i,\eta^i,g)$ be a metric $f$-$K$-contact manifold,
$\mD_1$ be spanned by vector fields $\xi_1, \ldots, \xi_m$ for some $m<s$ and $\mD_2$ be spanned by $\xi_{m+1}, \ldots, \xi_s$.
Then $g$ is critical for the action \eqref{Eq-Smix} with respect to all variations preserving orthogonality of \,$\mD_1$ and $\mD_2$.
\end{proposition}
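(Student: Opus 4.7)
The plan is to verify the Euler-Lagrange equations \eqref{EL11withmixed}, \eqref{EL33withmixed}, \eqref{EL13withmixed}, together with their duals obtained by exchanging $\mD_1$ and $\mD_2$, directly at the given metric $g$, by showing that almost every extrinsic tensor associated with the three-fold decomposition $TM=\mD_1\oplus\mD_2\oplus\mD_3$ vanishes. Here $\mD_1\oplus\mD_2=\ker f$ and $\mD_3=f(TM)$. Besides the identities \eqref{E-nabla-f}, the essential tool is that on an $f$-$K$-contact manifold the Killing condition combined with $d\eta^i=\Phi$ forces $\nabla_X\xi_i$ to be a fixed multiple of $fX$; in particular $\nabla_X\xi_i\in f(TM)=\mD_3$ and $\nabla\xi_i$ is skew-symmetric.

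From $\nabla_{\xi_j}\xi_i=0$ one immediately obtains $h_1=h_2=0$ and $T_1=T_2=0$, hence $H_1=H_2=0$; the skew-symmetry of $f$ and the proportionality $\nabla_X\xi_i\propto fX$ then give $h_3=0$ and $H_3=0$. A case-by-case check using the same identities yields $\tilde h_m=0$ for $m=1,2,3$ and $\tilde T_3=0$, and therefore every mixed tensor $\mh_{ij}$, $\mT_{ij}$, $\mA_{ij}$, $\mTs_{ij}$ vanishes. Using $[\xi_i,\xi_j]=0$ and $\nabla_{\xi_i}\xi_j=0$ one sees $R(\xi_i,\xi_j)\xi_j=0$ for all $i,j$, so $S_{\,\mD_1,\mD_2}\equiv 0$. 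With these vanishings, every summand of \eqref{EL11withmixed}, of its dual, and of \eqref{EL33withmixed} contains at least one of the vanishing factors, so each equation reduces to $0=0$ identically, with Lagrange multiplier $\lambda=0$.

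The main obstacle is equation \eqref{EL13withmixed}, because $\tilde T_1$ and $\tilde T_{12}$ are \emph{not} identically zero: for $Z,W\in\mD_3$ the Killing identity yields $g([Z,W],\xi_i)=2\,g(W,fZ)$, so for $X=\xi_k\in\mD_1$ both $\tilde T^\sharp_{1,\xi_k}$ and $\tilde T^\sharp_{12,\xi_k}$ act as $Z\mapsto fZ$ on $\mD_3$. After discarding the obviously vanishing terms in \eqref{EL13withmixed}, only the difference $(\Div\tilde T_{12,X})(Y)-(\Div\tilde T_{1,X})(Y)$ survives. The decisive point is that, regarded as $(1,1)$-tensors on $TM$ extended by zero outside their natural domains, both $\tilde T^\sharp_{1,\xi_k}$ and $\tilde T^\sharp_{12,\xi_k}$ coincide with the single endomorphism $f\circ P_3$ of $TM$; consequently their divergences are equal as $(0,1)$-tensors and the remaining difference cancels. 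The dual of \eqref{EL13withmixed}, with $\mD_1$ and $\mD_2$ exchanged, is handled symmetrically, completing the verification.
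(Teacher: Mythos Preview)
Your proof is correct and follows essentially the same route as the paper. Both arguments first verify that the triple decomposition $\ker f=\mD_1\oplus\mD_2$, $\mD_3=f(TM)$ has all second fundamental forms and all mixed tensors equal to zero, so that \eqref{EL11withmixed}, its dual, and \eqref{EL33withmixed} collapse to $S_{\,\mD_1,\mD_2}\,g=\lambda g$, which holds with $\lambda=0$ because $R(\xi_i,\xi_j)\xi_j=0$. For \eqref{EL13withmixed} both proofs isolate the only surviving contribution $-(\Div\tT_{1,X})Y+(\Div\tT_{12,X})Y$; the paper disposes of it by computing each divergence separately and showing both equal $\sum_\mu g(\nabla_{e_\mu}\tT_{1,\xi_\alpha}Y,e_\mu)$, whereas you observe more directly that $\tT_{1,\xi_k}$ and $\tT_{12,\xi_k}$, once extended by zero, are the \emph{same} endomorphism $f\circ P_3$ of $TM$ (since $\tT_{1,\xi_k}$ annihilates $\mD_2$), so their divergences coincide automatically. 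This is a slightly cleaner packaging of the same cancellation.
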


\begin{proof}
All distributions $\mD_1,\mD_2$ and $\mD_3=f(TM)$ are pairwise mixed totally geodesic and pairwise mixed integrable, all are totally geodesic and only $\mD_3$ is non-integrable, with $\Ts_{\xi_{\alpha}} = f\ (\alpha = 1, \ldots ,s)$, see~\cite{blairfKcontact}.
The Euler-Lagrange equation \eqref{EL11withmixed}, for volume-preserving variations, becomes:
\begin{equation} \label{EL11fKcontact}
 S_{\,\mD_1, \mD_2}\cdot g(\xi_\alpha ,\xi_\beta ) = \lambda\,g(\xi_\alpha ,\xi_\beta ),\quad
 \alpha, \beta \in \{ 1, \ldots, m \}.
\end{equation}
Its dual equation (with respect to interchanging $\mD_1$ and $\mD_2$) has the same form as above, only with $\alpha, \beta \in \{ m+1, \ldots, s \}$.
Also, the Euler-Lagrange equation \eqref{EL33withmixed} becomes
\begin{equation} \label{EL33fKcontact}
 (S_{\,\mD_1,\mD_2} -\lambda)\, g(X,Y) = 0,\quad X,Y \in \mD_3.
\end{equation}
The Euler-Lagrange equation \eqref{EL13withmixed} becomes
\begin{eqnarray*}
&&- (\Div \tT_{1,X}) Y
 +\sum\nolimits_{a,\mu} g(Y ,\T_{ 3 , E_a} e_\mu )\,g( X , \T_{1 , e_\mu} E_a  )
 +\sum\nolimits_{i,j} g(X, T_2 ( {\cal E}_i , {\cal E}_j ) )\,g(Y, T_2 ( {\cal E}_i , {\cal E}_j ) ) \nonumber\\
&& +\,(\Div \tT_{12,X} ) Y - \sum\nolimits_{a,\mu} g( \T_{3 , E_a } e_\mu , Y )\,g( \T_{1 , e_\mu } E_a , X ) = 0,
\quad X \in \mD_1,\ Y \in \mD_3,
\end{eqnarray*}
which, as $\mD_1$ and $\mD_2$ are integrable, can be simplified to the following form:
\begin{equation} \label{EL13fKcontact}
 (\Div \tT_{1,X}) Y + (\Div \tT_{12,X} ) Y = 0,\quad X \in \mD_1,\ Y \in \mD_3.
\end{equation}
We have
\begin{eqnarray*}
 && (\Div \tT_{1, \xi_\alpha}) Y = \sum\nolimits_\mu g( \nabla_{e_\mu} \tT_{1,\xi_\alpha} Y , e_\mu )
- \sum\nolimits_\mu g( \tT_{1,\xi_\alpha} ( \nabla_{e_\mu} Y ) , e_\mu ) \\
\eq \sum\nolimits_\mu g( \nabla_{e_\mu} \tT_{1,\xi_\alpha} Y, e_\mu) +\sum\nolimits_\mu g(\tT_{1,\xi_\alpha} e_\mu, \nabla_{e_\mu} Y) \\
\eq
 \sum\nolimits_\mu g( \nabla_{e_\mu} \tT_{1,\xi_\alpha} Y , e_\mu ) + \sum\nolimits_\mu g(\tT_{1,\xi_\alpha} e_\mu, T_3( {e_\mu}, Y ) ) \\
\eq  \sum\nolimits_\mu g( \nabla_{e_\mu} \tT_{1,\xi_\alpha} Y , e_\mu )
+ \sum\nolimits_{\mu,\beta}  g( \tT_{1,\xi_\alpha}  e_\mu , \xi_\beta )\,g(  \Ts_{3 ,\xi_\beta} {e_\mu} , Y )
 =\sum\nolimits_\mu g( \nabla_{e_\mu} \tT_{1,\xi_\alpha} Y , e_\mu ) ,
\end{eqnarray*}
as $g( \tT_{1,\xi_\alpha}  e_\mu , \xi_\beta ) = g( \mTs_{23,\xi_\alpha}  e_\mu , \xi_\beta ) =0$ by mixed integrability of $\mD_2$ and $\mD_3$.
On the other hand, we can assume that $\nabla_{e_\mu} Y\perp\mD_3$ at a point, and hence $\nabla_{e_\mu} Y$ belongs to $\mD_1 \oplus \mD_2$, on which $\tT_{12,\xi_\alpha}$ vanishes;~then
\begin{eqnarray*}
 (\Div \tT_{12, \xi_\alpha}) Y \eq \sum\nolimits_\mu g( \nabla_{e_\mu} \tT_{12,\xi_\alpha} Y , e_\mu )
 - \sum\nolimits_\mu g( \tT_{12,\xi_\alpha} ( \nabla_{e_\mu} Y ) , e_\mu )
 =\sum\nolimits_\mu g( \nabla_{e_\mu} \tT_{12,\xi_\alpha} Y , e_\mu ) .
\end{eqnarray*}
Also, $ g( \nabla_{e_\mu} \tT_{12,\xi_\alpha} Y , e_\mu ) =  g( \nabla_{e_\mu} \tT_{1,\xi_\alpha} Y , e_\mu )$
as $Y,e_\mu \in \mD_3$, and it follows that
\begin{eqnarray*}
 -(\Div \tT_{1,\xi_\alpha}) Y + (\Div \tT_{12,\xi_\alpha} ) Y
 = \sum\nolimits_\mu g( \nabla_{e_\mu} \tT_{1,\xi_\alpha} Y , e_\mu )
 -\sum\nolimits_\mu g( \nabla_{e_\mu} \tT_{1,\xi_\alpha} Y , e_\mu )
  =0.
\end{eqnarray*}
Similar reasoning can be applied to the equation dual to \eqref{EL13fKcontact} with respect to the interchanging $\mD_1$ and $\mD_2$.
Finally, using \eqref{E-nabla-f}$_3$ in 
$R(\xi_\alpha,\xi_\beta)\xi_\beta$,
we get $S_{\,\mD_1,\mD_2} =0$; hence, all Euler-Lagrange equations: \eqref{EL11fKcontact}, its dual, \eqref{EL33fKcontact}, \eqref{EL13fKcontact} and its dual, hold with $\lambda = 0$.
\end{proof}


The following Lemma 
is related to 
assumptions about mixed integrability, considered below. 
\begin{lemma}
Let $\mD_3$ be tangent to a Riemannian foliation on $(M,g)$, let $\mD$ be orthogonal complement of $\mD_3$, and let $\mD_1$ and $\mD_2$ be any orthogonal distributions such that $\mD_1 \oplus \mD_2 = \mD$. Then $\mT_{13}=0$ if and only if $\mT_{23}=0$.
\end{lemma}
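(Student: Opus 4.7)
My plan is to reduce the statement to a pointwise identity between the $\mD_2$-component of $\mT_{13}$ and the $\mD_1$-component of $\mT_{23}$, and then derive that identity from the defining property of a Riemannian foliation together with the metric and torsion-free properties of $\nabla$.

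First I would unwind the definitions. For $X\in\mD_1$ and $Y\in\mD_3$ one has $P_1 X=X$, $P_3 Y=Y$, $P_3 X=P_1 Y=0$, so by Definition~1 (applied to the pair $\mD_1,\mD_3$) $\mT_{13}(X,Y)=\tilde T_2(X,Y)=\tfrac12 P_2[X,Y]$. Hence for $W\in\mD_2$
\[
 g(\mT_{13}(X,Y),W)=\tfrac12\,g([X,Y],W).
\]
Symmetrically, $g(\mT_{23}(W,Y),X)=\tfrac12\,g([W,Y],X)$. Thus the lemma is equivalent to showing that $g([X,Y],W)=0$ for all $X\in\mD_1$, $Y\in\mD_3$, $W\in\mD_2$ if and only if $g([W,Y],X)=0$ for all such triples.

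Next I would use the standard characterisation: $\mD_3$ is tangent to a Riemannian foliation if and only if, for every $U,V\in\mD$ and every $Z\in\mD_3$,
\begin{equation}\label{E-RF}
 g(\nabla_U Z,V)+g(\nabla_V Z,U)=0,
\end{equation}
which follows by expanding $(\mathcal L_Z g)(U,V)=0$ on $\mD\times\mD$ and using that the $g$-orthogonal $\mD=\mD_3^\perp$ carries the transverse metric. Taking $U=X$, $V=W$, $Z=Y$ in \eqref{E-RF} gives $g(\nabla_X Y,W)=-g(\nabla_W Y,X)$. Combined with the metric-compatibility identities $g(\nabla_Y X,W)=-g(X,\nabla_Y W)$ and $g(\nabla_Y W,X)=-g(\nabla_Y X,W)$ (which hold because $g(X,W)=0$), this yields
\[
 g([W,Y],X)=g(\nabla_W Y,X)-g(\nabla_Y W,X)=-g(\nabla_X Y,W)+g(\nabla_Y X,W)=-g([X,Y],W).
\]
Therefore $g(\mT_{13}(X,Y),W)=-g(\mT_{23}(W,Y),X)$ for all $X\in\mD_1$, $Y\in\mD_3$, $W\in\mD_2$.

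Finally, since $\mT_{13}$ takes values in $\mD_2$ (being a component of $\tilde T_2$) and $\mT_{23}$ takes values in $\mD_1$, the identity above implies that the vanishing of one tensor is equivalent to the vanishing of all the pairings $g([X,Y],W)$, which is in turn equivalent to the vanishing of the other. The only nontrivial step is the verification of \eqref{E-RF} as the correct characterisation of the Riemannian-foliation hypothesis; I expect this to be the main subtlety, but it is standard and can be stated as the transverse metric being holonomy-invariant. All remaining computations are bookkeeping with the Koszul identities.
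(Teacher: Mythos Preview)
Your proposal is correct and takes essentially the same approach as the paper: both arguments use the Riemannian-foliation condition $(\mathcal L_\xi g)|_{\mD\times\mD}=0$ for $\xi\in\mD_3$ to obtain the pointwise identity $g(\mT_{13}(X,\xi),W)=-g(\mT_{23}(W,\xi),X)$. The paper applies the Lie-derivative formula directly to reach $g([\xi,X],Y)+g([\xi,Y],X)=0$, while you arrive at the same equality by rewriting it through $\nabla$ and torsion-freeness; the two computations are equivalent.
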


\begin{proof}
Suppose that $\mT_{13}=0$, let $\xi \in \mD_3$, $X \in \mD_1$ and $Y \in \mD_2$. Then
\[
0 = (\mathcal{L}_\xi g) (X,Y) = \xi (g(X,Y)) - g( [\xi, X ] , Y) - g([\xi,Y],X) = -g(\mT_{13}(\xi,X), Y) - g(\mT_{23}(\xi,Y),X),
\]
that completes the proof.
\end{proof}

\begin{proposition} \label{propH1}
Let $M^{2n+1}({f},\xi,\eta,g)$ be a $K$-contact manifold with a global $f$-basis $e_1 , \ldots , e_{2n}$ of $f(TM)$,
i.e., $f e_i = e_{n+i}\ (1 \leq i \le n)$.
Suppose that $\mD_1$ is spanned by $e_1 , \ldots , e_{n}$, $\mD_2$ is spanned by $e_{n+1} , \ldots , e_{2n}$, $\mD_3$ is spanned by $\xi$,
$\mD_1,\mD_2$ are integrable and totally geodesic, and $\mT_{13}=\mT_{23}=0$.
Then $g$ is a critical point of the action \eqref{Eq-Smix}
with respect to volume-preserving variations of metric,
keeping orthogonality of $\mD_1$ and $\mD_2$,
and such that $\dt g_t |_{\,t=0}(\xi,\xi)=0$.
\end{proposition}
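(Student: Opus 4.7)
My plan is to verify the Euler-Lagrange equations \eqref{EL11withmixed}, its dual, \eqref{EL13withmixed}, and its dual for some constant $\lambda \in \RR$. Since $\dim \mD_3 = 1$ and the admissible variations satisfy $B(\xi, \xi) = 0$ and $B|_{\mD_1 \times \mD_2} = 0$, equation \eqref{EL33withmixed} and the equation on $\mD_1 \times \mD_2$ drop out automatically; the two equations on $\mD_i \times \mD_i$ must hold up to a multiple of $g$ on the right (for volume preservation), while those mixed with $\mD_3$ must hold as stated.

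First I would catalogue the extrinsic tensors. The assumptions that $\mD_1, \mD_2$ are integrable and totally geodesic yield $h_1 = h_2 = T_1 = T_2 = 0$ and $H_1 = H_2 = 0$, while $\nabla_\xi \xi = 0$ gives $h_3 = T_3 = H_3 = 0$. Using the $K$-contact identity $\nabla_X \xi = -fX$ for $X \perp \xi$ (from $\xi$ Killing combined with $d\eta = \Phi$) together with $f\mD_1 \subset \mD_2$ and $f\mD_2 \subset \mD_1$, I verify that $\tilde h_3 = 0$ on $\mD_1 \oplus \mD_2$, and that $\tilde T_3$ vanishes on $\mD_1 \times \mD_1$ and $\mD_2 \times \mD_2$ while being proportional to $g(X, fY)\xi$ on $\mD_1 \times \mD_2$. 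From $\mT_{13} = 0$ and $\mT_{23} = 0$ I would derive $P_2 \nabla_\xi X = -fX$ for $X \in \mD_1$ and $P_1 \nabla_\xi Y = -fY$ for $Y \in \mD_2$; combined with total geodesicity these give $\tilde h_2(X, \xi) = -fX$ for $X \in \mD_1$, $\tilde h_1(Y, \xi) = -fY$ for $Y \in \mD_2$, and force $\tilde h_1, \tilde h_2, \tilde T_1, \tilde T_2$ to vanish on all other positions of their respective domains. Consequently $\mA_{12} = \mh_{12} = 0$, while $\mh_{13}, \mh_{23}, \mT_{12}, \mA_{13}, \mA_{23}, \mTs_{12}$ remain nontrivial and are each governed by $f$.

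Substituting these data into \eqref{E-new2} shows $S_{\,\mD_1, \mD_2}$ is a specific constant multiple of $n$. Plugging them into \eqref{EL11withmixed} for $X, Y \in \mD_1$, every term involving $h_i, T_i, H_i, \mA_{12}, \mT_{13}, \mT_{23}$ vanishes, and the three surviving groups (coming from $\mA_{13}$ paired with $\mh_{23}$, from $\|\mh_{23}\|^2$, and from $\|\mTs_{12}\|^2$) each reduce, via $\sum_i g(e_i, X) g(e_i, Y) = g(X, Y)$ for $X, Y \in \mD_1$, to a fixed multiple of $g(X, Y)$. Their sum together with the $S_{\,\mD_1, \mD_2}\, g(X, Y)$ term yields $\lambda\, g(X, Y)$ for a specific constant $\lambda$. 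The dual equation on $\mD_2 \times \mD_2$ follows from the $\mD_1 \leftrightarrow \mD_2$ symmetry with the same $\lambda$.

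Finally, for \eqref{EL13withmixed} with $X \in \mD_1$ and $Y = \xi$, the vanishing catalogue kills every term except $-(\Div \tilde h_2)(X, \xi)$: indeed the divergences $\Div A_{1, Y}$, $\Div \tilde T^\sharp_{1, X}$, $\Div A_{12, Y}$, $\Div \tilde T^\sharp_{12, X}$ vanish because $A_1$, $h_{12}$ and $\tilde T_1|_{\mD_1^\perp \times \mD_1^\perp}$ all vanish identically, and $\nabla_X H_1$ and $\nabla_X P_3(H_1 + H_2)$ vanish since $H_1 = H_2 = 0$. I would then expand $(\Div \tilde h_2)(X, \xi)$ in the adapted orthonormal frame $\{e_a, e_{n+i}, \xi\}$ using $\nabla_\xi f = 0$, $\nabla_X \xi = -fX$, and the derived identity $P_2 \nabla_\xi X = -fX$, and verify that all contributions cancel term by term. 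The dual for $X \in \mD_2$, $Y = \xi$ is handled by the same argument. The hardest step will be this last one: showing $(\Div \tilde h_2)(X, \xi) = 0$ for arbitrary $X \in \mD_1$ requires careful bookkeeping of how $f$ acts on each subbundle together with the $K$-contact identity $\nabla_\xi f = 0$, the totally geodesic property of $\mD_1, \mD_2$, and the mixed integrability conditions $\mT_{13} = \mT_{23} = 0$.
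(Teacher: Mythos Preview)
Your proposal is correct and follows essentially the same route as the paper: catalogue the vanishing of $h_i, T_i, H_i, \tilde h_3, \mT_{13}, \mT_{23}$ and the identity $\tilde h_2(X,\xi)=\mh_{13}(X,\xi)=-\mTs_{12,\xi}X$ (equivalently $-fX$), then reduce \eqref{EL11withmixed} and its dual to a constant multiple of $g(X,Y)$, and show that in \eqref{EL13withmixed} only $-(\Div\tilde h_2)(X,\xi)$ survives. The one difference is in the final step: the paper rewrites $-(\Div\tilde h_2)(X,\xi)=(\Div\mTs_{12,\xi})(X)=(\Div\tilde T^\sharp_{3,\xi})(X)$ and then invokes a result from \cite{rz-2} (that this divergence vanishes on any contact metric structure), whereas you propose to expand it directly in the adapted frame using $\nabla_\xi f=0$ and $\nabla_X\xi=-fX$. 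Both work; the citation is shorter, while your direct calculation is self-contained but requires more careful bookkeeping (note in particular that $(\nabla_Z\tilde h_2)(X,\xi)$ is not automatically $\mD_2$-valued, so the $\mD_1$- and $\mD_3$-components of $\nabla_Z(-fX)$ must be tracked and shown to cancel against contributions from $\tilde h_2(\nabla_Z X,\xi)$ and $\tilde h_2(X,\nabla_Z\xi)$).
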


\begin{proof}
All $\mD_1,\mD_2,\mD_3$ are integrable and totally geodesic. 
We have
$\mh_{i3}(X,\xi) = - \mTs_{12,\xi} X$ for $X \in \mD_i\ (i=1,2)$, as, e.g., using 
$\mT_{13}=0$, we have
\begin{equation} \label{eqmhmT}
g( \mh_{13}(X ,\xi) , Y) = \frac{1}{2}\,g( \nabla_X \xi + \nabla_\xi X , Y) =  g( \nabla_X \xi  , Y) = -g(\mTs_{12,\xi} X,Y),
\quad X \in \mD_1,\ Y \in \mD_2.
\end{equation}
Also,
$\mT_{12}( e_i , e_j ) = \delta_{j , n+i}\,\xi\ (i<j)$.
For variations preserving orthogonality of distributions, we get the following Euler-Lagrange equations.
For $B(X,Y) = B(P_1 X, P_1 Y)$, the Euler-Lagrange equation \eqref{EL11withmixed} becomes
\begin{eqnarray*}
 6\sum\nolimits_{j} g( \mTs_{12,\xi} {\cal E}_j , X)\,g( \mTs_{12,\xi} {\cal E}_j , Y) = (\lambda - S_{\,\mD_1, \mD_2})\,g(X,Y),
\end{eqnarray*}
for all $X,Y \in \mD_1$, i.e.,
$ 6\,g( \tT_\xi X , \tT_\xi Y  ) = (\lambda - S_{\,\mD_1, \mD_2})\,g(X,Y)$,
which can be written as
\begin{equation}\label{E-S12}
(S_{\,\mD_1, \mD_2} - \lambda + 6)\,g(X,Y) =0 , \quad (X,Y \in \mD_1).
\end{equation}
For $B(X,Y) = B(P_2 X, P_2 Y)$ we have the dual equation, i.e., \eqref{E-S12} for $X,Y \in\mD_2$.
In Euler-Lagrange equation \eqref{EL13withmixed} (evaluated on $X \in \mD_1$ and $\xi$) all terms vanish except
\[
 - \Div {\tilde h}_2 (X,\xi) = (\Div \mTs_{12,\xi})(X) = (\Div \tT_{3,\xi})(X),
\]
which vanishes for contact metric structures, see the proof of \cite[Proposition 8]{rz-2}. Thus, the Euler-Lagrange equation for $B(X,Y) = B(P_1 X ,P_3 Y) + B(P_3 X , P_1 Y)$ is satisfied, as well as its dual for $B(X,Y) = B(P_2 X ,P_3 Y) + B(P_3 X , P_2 Y)$. Hence, all Euler-Lagrange equations except \eqref{EL33withmixed} hold, which proves the claim.

We note that for $B(X,Y) = B(P_3 X, P_3 Y)$ we get the following (scalar, as $\mD_3$ is 1-dimensional) Euler-Lagrange equation~\eqref{EL33withmixed}:
\begin{eqnarray*} 
 - 2 \sum\nolimits_{\,a,i} g(\xi, \mT_{12} ( E_a , {\cal E}_i ) )\, g(\xi, \mT_{12} (E_a , {\cal E}_i)) = \lambda - S_{\,\mD_1,\mD_2}\, ,
\end{eqnarray*}
from where it follows that
$S_{\,\mD_1,\mD_2} = 2n + \lambda$,
which cannot hold together with \eqref{E-S12} - hence, $g$ is not critical with respect to all volume-preserving variations keeping orthogonality of $\mD_1$ and $\mD_2$.
\end{proof}

\begin{proposition} \label{propH2}
Let $M^{2n+1}({f},\xi,\eta,g)$ be a $K$-contact manifold. 
Let $\mD_1$ and $\mD_2$ be distributions orthogonal and complementary in $f(TM)$, each of them invariant under $f$  
 and let $\mD_3=\ker f$
be spanned by the Reeb field $\xi$.
Suppose that $\mT_{13}=\mT_{23}=0$
and $P_2 T_1 = P_1 T_2 = P_1 h_2 = P_2 h_1 =0$.
Then $g$ is a critical point of the action \eqref{Eq-Smix}
with respect to volume-preserving variations of metric keeping the orthogonality of $\mD_1$ and $\mD_2$.
\end{proposition}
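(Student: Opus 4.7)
The plan is to mimic the strategy of the preceding proposition on $K$-contact manifolds with a global $f$-basis, but exploiting the stronger property that each of $\mD_1$ and $\mD_2$ is $f$-invariant. The aim is to show that every extrinsic tensor entering the Euler--Lagrange equations \eqref{EL11withmixed}, \eqref{EL33withmixed}, \eqref{EL13withmixed} (and their duals under interchange of $\mD_1, \mD_2$) collapses to zero, so that all these equations hold simultaneously with $\lambda=0$.

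First I would establish the total geodesy of each distribution. For $X,Y\in\mD_1$, the $K$-contact identity $\nabla_X\xi=-fX$ combined with the skew-symmetry of $f$ gives $g(h_1(X,Y),\xi)=\tfrac12(g(Y,fX)+g(X,fY))=0$, whence $P_3 h_1=0$; together with the hypothesis $P_2 h_1=0$ this forces $h_1\equiv 0$, and symmetrically $h_2\equiv 0$. Since $\mD_3=\ker f$ is totally geodesic in any $K$-contact manifold, $h_3\equiv 0$ as well, and hence $H_1=H_2=H_3=0$. Next I would verify $\mh_{12}=\mh_{13}=\mh_{23}=0$ and $\mT_{12}=0$. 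The $\xi$-component of $\mh_{12}(E_a,\mathcal{E}_i)=\tilde h_3(E_a,\mathcal{E}_i)$ equals $\tfrac12(g(\mathcal{E}_i,fE_a)+g(E_a,f\mathcal{E}_i))$, which vanishes by $f$-invariance and orthogonality of $\mD_1,\mD_2$; the vanishings of $\mh_{13}$ and $\mh_{23}$ follow analogously by combining $\nabla_X\xi=-fX$ with the hypotheses $\mT_{13}=\mT_{23}=0$ (which kill the respective $P_i\nabla_\xi(\cdot)$ parts); and $\mT_{12}(X,Y)=\tfrac12 P_3[X,Y]$ vanishes because $g([X,Y],\xi)=-2g(X,fY)=0$ for $X\in\mD_1$, $Y\in\mD_2$, since $fY\in\mD_2$. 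Substituting all these zeros together with the hypotheses $P_1 T_2=P_2 T_1=0$ into \eqref{E-new2} gives $S_{\,\mD_1,\mD_2}=0$.

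Second, I would dispose of the divergence-type terms in \eqref{EL13withmixed}. The integrability tensor $\tilde T_1=\tfrac12 P_1[\cdot,\cdot]$ on $\mD_1^\perp=\mD_2\oplus\mD_3$ decomposes into a $\mD_2\times\mD_2$ block (zero by $P_1 T_2=0$), a $\mD_2\times\mD_3$ block (zero by $\mT_{23}=0$), and a $\mD_3\times\mD_3$ block (trivially zero because $\dim\mD_3=1$); hence $\tilde T_1\equiv 0$, so $\tT_{1,X}\equiv 0$ and $(\Div\tT_{1,X})Y=0$. The tensor $\tilde T_{12}$ lives on $\mD_{12}^\perp=\mD_3$, which is one-dimensional, so $\tT_{12,X}\equiv 0$ trivially. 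The shape operators $A_{1,\xi}$ and $A_{12,\xi}$ vanish because their defining pairings are with $h_1$ and $h_{12}$ against $\xi$, both zero by Step 1 and its $\mD_{12}$ analogue. Finally $\tilde h_2\equiv 0$ on $\mD_2^\perp=\mD_1\oplus\mD_3$ (zero on $\mD_1\times\mD_1$ by $P_2 h_1=0$, on $\mD_3\times\mD_3$ by $\nabla_\xi\xi=0$, on $\mD_1\times\mD_3$ by the same argument as for $\mh_{13}$), hence $\Div\tilde h_2=0$.

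Putting these facts together, \eqref{EL11withmixed} and its dual collapse to $S_{\,\mD_1,\mD_2}\cdot g_i=\lambda g_i$, \eqref{EL33withmixed} collapses to $(\lambda-S_{\,\mD_1,\mD_2})\,g_3=0$, and every term of \eqref{EL13withmixed} together with its dual is individually zero; with $S_{\,\mD_1,\mD_2}=0$ the choice $\lambda=0$ satisfies them all. The main obstacle, as in the preceding proposition, is the bookkeeping for \eqref{EL13withmixed}: one must verify that each of its many terms really vanishes under the hypotheses. Unlike the preceding case, however, the divergence terms vanish outright rather than cancelling via a Stokes-type identity, precisely because $f$-invariance of $\mD_1$ and $\mD_2$ forces $\tilde T_1$ and $\tilde T_{12}$ to vanish as tensors.
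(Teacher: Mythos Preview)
Your proposal is correct and follows essentially the same approach as the paper's proof: verify that all three distributions are totally geodesic, pairwise mixed totally geodesic and pairwise mixed integrable, so that each Euler--Lagrange equation collapses to $S_{\,\mD_1,\mD_2}\cdot g=\lambda\,g$ on the relevant block (and \eqref{EL13withmixed} has every term vanish). You go a step further than the paper by explicitly computing $S_{\,\mD_1,\mD_2}=0$ via \eqref{E-new2} and by spelling out why $\tilde T_1$, $\tilde T_{12}$, $\tilde h_2$, $A_{1,\xi}$, $A_{12,\xi}$ all vanish; the paper simply asserts the reductions and notes that the resulting equations are equivalent, without fixing $\lambda=0$.
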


\begin{proof}
All $\mD_1,\mD_2, \mD_3$ 
are totally geodesic, pairwise mixed totally geodesic and pairwise mixed integrable, but $\mD_1$ and $\mD_2$ are not integrable.
%
The Euler-Lagrange equation \eqref{EL11withmixed} reduces to
$S_{\,\mD_1, \mD_2}\cdot g(X,Y) = \lambda\, g(X,Y)$
for $X,Y \in \mD_1$.
The~dual equation has the same form for $X,Y \in \mD_2$.
The Euler-Lagrange equation \eqref{EL33withmixed} on $\mD_3$ becomes
$S_{\,\mD_1,\mD_2} = \lambda$, which is satisfied, and equivalent to the Euler-Lagrange equations considered above.
In \eqref{EL13withmixed} all terms vanish, so it also holds, as well as its dual. 
\end{proof}

\begin{remark} \rm
The 
distributions $\mD_1$ and $\mD_2$ satisfying all assumptions of Propositions~\ref{propH1} and \ref{propH2} 
exist,
 for example, on
Heisenberg groups with left-invariant $K$-contact structures, see~\cite{blair}.
\end{remark}

\subsection{A modified variational problem}
\label{sec:3.5}

Formulas similar to \eqref{SD1D2} can be used also in other variational problems. For example, consider two
distributions $\mD_1$ and $\mD$ on $M$ such that $\mD_1 \subset \mD$.
Let $g_t$ be a family of metrics on $M$ that keep $\mD^\perp$ fixed, let $\mD_2 = \mD^\perp$. 
Then, both distributions $\mD_1$ and $\mD_2$ are fixed and orthogonal for all $t$.
Let $\mD_3(t)$ be the $g_t$-orthogonal complement of $\mD_1$ within $\mD$, i.e., $\mD_3(t) = \mD_1^\perp(t) \cap\,\mD$.
Considering orthonormal frames: $\{ E_a \}$ of $\mD_1$, $\{ e_\mu(t) \}$ of $\mD_3(t)$ 
and $\{ {\cal E}_i \}$ of $\mD_2$, we obtain for the mutual curvature of distributions $\mD_1$ and $\mD_1^\perp \cap\,\mD$, analogously as \eqref{SD1D2}:
\begin{eqnarray}\label{SD1inD}
 2\,S_{\,\mD_1 , \mD_1^\perp  \cap\,\mD} \eq S_{\,\mD_1 , \mD_1^\perp}
 - S_{\,\mD_2 , \mD_2^\perp} + S_{\,\mD_1 \oplus \mD_2 ,\,(\mD_1 \oplus \mD_2)^\perp } \,.
\end{eqnarray}
Assuming that $\mD^\perp$ is preserved as the metric changes, formula \eqref{SD1inD} is true for all $g_t$, and every term on its right-hand side is the mixed scalar curvature of a fixed distribution and its varying orthogonal complement. We can use \eqref{SD1inD} to derive variational formulas at $t=0$,
for the modified action \eqref{Eq-Smix}:
\begin{equation}\label{Eq-Smix-2}
 J_{\mD_1 \subset \mD}: g \mapsto \int_M \,S_{\,\mD_1,\,\mD_1^\perp \cap\,\mD}\, {\rm d} \vol_g,
\end{equation}
as long as $B(X_1,X_2) = 0 = B(X_2,X_3)$ for all $X_i \in \mathfrak{X}_{\mD_i} \ (i=1,2,3)$.
By \eqref{SD1inD} and  \eqref{E-varJh-init2-1}--
\eqref{E-varJh-init2-3}, we get, similarly as in \eqref{dtS1S2general} and with the same notation as in Proposition \ref{propdtS1S2general}, the following:
\begin{equation*}
 2\,{\rm\frac{d}{dt}} J_{\mD_1 \subset \mD}(g_t)|_{\,t=0} =  I_1 + I_2 -I_3 -I_4 -I_5 -I_6.
\end{equation*}

\begin{proposition}
Let $M^{2n+s}({f},\xi_i,\eta^i,g)$ be a metric $f$-$K$-contact manifold,
let $\mD_1$ be spanned by $\xi_1 , \ldots , \xi_m$ for some $m<s$, and let $\mD=\ker f$.
Then $g$ is critical for the action \eqref{Eq-Smix-2}
with respect to all variations that preserve orthogonality of $f(TM)$ and $\mD$.
\end{proposition}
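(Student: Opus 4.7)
The plan is to adapt the pointwise identity \eqref{E-new2} to the integrand $S_{\,\mD_1,\mD_1^\perp\cap\mD}$, applied with the $g_t$-orthogonal decomposition $TM=\mD_1\oplus\mD_3(t)\oplus f(TM)$, where $\mD_3(t):=\mD_1^\perp(t)\cap\mD$. The first step is to verify that at the given $f$-$K$-contact metric $g$, \emph{each} squared-norm summand and the divergence-argument $V:=P_3H_1+P_1H_3$ on the right-hand side of \eqref{E-new2} vanishes identically. By \eqref{E-nabla-f} the three distributions $\mD_1$, $\mD_3(0)$, $f(TM)$ are all totally geodesic and $\ker f$ is an integrable totally geodesic foliation, which gives $h_i=H_i=\tilde h_i=0$ for $i=1,2,3$, $T_1=T_3=0$, and the vanishing of the integrability tensor of $\ker f$. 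For the two remaining candidate terms $\|\tilde T_1|_{\mD_3\times f(TM)}\|^2$ and $\|\tilde T_3|_{\mD_1\times f(TM)}\|^2$, the key observation is that $\mathcal{L}_{\xi_\alpha}f=0$ from \eqref{E-nabla-f} implies $[\xi_\alpha,Y]\in f(TM)$ whenever $Y\in f(TM)$; writing any $X\in\ker f$ as a combination of the $\xi_\alpha$ and expanding $[X,Y]$ then shows $P_{\mD_1}[X,Y]=0$ whenever $X\in\mD_3$, and $P_{\mD_3}[X,Y]=0$ whenever $X\in\mD_1$. Hence those restrictions of $\tilde T_1,\tilde T_3$ vanish, so $S_{\mD_1,\mD_3(0)}\equiv 0$ and $V(0)\equiv 0$ on~$M$.

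The second step is to extract the first variation. Since the integrand vanishes identically at $g$,
\[
\tfrac{d}{dt}\,J_{\mD_1\subset\mD}(g_t)\big|_{t=0}=\int_\Omega\dot S\big|_{t=0}\,{\rm d}\vol_{g} .
\]
For each tensor $W$ whose squared norm appears on the right of \eqref{E-new2}, $W(0)=0$ gives
\[
\tfrac{d}{dt}\|W(t)\|^2_{g_t}\big|_{t=0}=B(W(0),W(0))+2\,g(\dot W(0),W(0))=0,
\]
so no squared-norm term contributes to $\dot S|_{t=0}$. Likewise, $V(0)=0$ collapses $\tfrac{d}{dt}\Div_{g_t}(V(t))\big|_{t=0}$ to $\Div_g(\dot V(0))$. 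Thus $\dot S|_{t=0}=\Div_g(\dot V(0))$, and Stokes' theorem on $\Omega$ gives the desired vanishing, because $\dot V(0)$ is supported inside~$\Omega$ (depending only on $B$ and its first derivatives, which vanish outside~$\Omega$).

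The main obstacle is the bracket calculation in the first paragraph, since $\tilde T_1$ and $\tilde T_3$ are \emph{not} identically zero on $f$-$K$-contact manifolds — each encodes the non-integrability of $f(TM)$ in the respective orthogonal complement. What is special about the restrictions $\tilde T_1|_{\mD_3\times f(TM)}$ and $\tilde T_3|_{\mD_1\times f(TM)}$ is that the presence of a $\ker f$-argument forces $[X,Y]$ to stay inside $f(TM)\oplus\mD_3$ or $f(TM)\oplus\mD_1$ respectively, so the $\mD_1$- or $\mD_3$-component kills the bracket. Everything else reduces to direct consequences of \eqref{E-nabla-f}, the total geodesicity of $\ker f$, and the compact support of the variation in~$\Omega$.
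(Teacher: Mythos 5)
Your proof is correct, but it takes a genuinely different route from the paper's. The paper splits $2S_{\,\mD_1,\mD_1^\perp\cap\,\mD}$ via \eqref{SD1inD} into three mixed scalar curvatures of complementary pairs, feeds these into the general variation formulas \eqref{E-varJh-init2-1}--\eqref{E-varJh-init2-3} to get the integrand \eqref{SD1inDint}, and then verifies the Euler--Lagrange equations block by block: the three diagonal blocks reduce to $S_{\,\mD_1,\mD_1^\perp\cap\,\mD}=\lambda$, and the $\mD_1\times\mD_3$ block to $(\Div\tilde\theta_1+\Div\tilde\theta_{12})=0$, which uses $\tilde\theta_{12}=0$ together with the nontrivial external input $\Div\tilde\theta_1=0$ imported from the proof of Proposition 8 of \cite{rz-2}; finally $S_{\,\mD_1,\mD_1^\perp\cap\,\mD}=0$ forces $\lambda=0$. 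You instead apply the pointwise identity \eqref{E-new2} to the $g_t$-orthogonal triple $(\mD_1,\mD_3(t),f(TM))$, check that every term --- all signed squared norms and the divergence argument --- vanishes identically at $t=0$ (the only delicate point being $\tilde T_1|_{\mD_3\times f(TM)}=\tilde T_2|_{\mD_1\times f(TM)}=0$, which you correctly extract from ${\cal L}_{\xi_\alpha}f=0$; the rest follows from $\nabla_{\xi_i}\xi_j=0$ and the $\xi_i$ being Killing), and conclude that the first variation equals $\int_\Omega\Div_g(\dot V(0))\,{\rm d}\vol_g=0$. Your route is shorter and more self-contained: it avoids the Euler--Lagrange machinery and the fact $\Div\tilde\theta_1=0$, and it yields criticality for all (not merely volume-preserving) admissible variations automatically, since no Lagrange multiplier ever enters. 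What it gives up is the explicit form of the Euler--Lagrange equations, which the paper's computation produces and reuses in the later results of Section \ref{sec:3.5}; it also hinges on the special circumstance that \eqref{E-new2} is termwise zero at $g$, so it would not extend to metrics where only the total sum vanishes. One presentational remark: $\frac{d}{dt}\|W(t)\|^2_{g_t}|_{t=0}$ is really a sum over a $t$-dependent orthonormal frame of $g_t$-inner products of frame evaluations of $W(t)$, not literally $B(W,W)+2\,g(\dot W,W)$; but since every frame evaluation of $W(0)$ vanishes, the product rule still gives zero, so this does not affect the argument.
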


\begin{proof}
Put $\mD_2= f(TM)$. Let $g_t$ be a family of metrics that keep $\mD^\perp$ fixed.
By \eqref{SD1inD}, from \eqref{E-varJh-init2-1}--\eqref{E-varJh-init2-3} we obtain
\begin{eqnarray}\label{SD1inDint}
\nonumber
&& 2\,\frac{d}{dt}\,J_{\mD_1 \subset \mD}(g_t)|_{\,t=0} = \int_M \big\<
 2\,\widetilde{\cal T}_1^\flat - 2 ( \Div {\tilde \theta}_1 )_{\,|\,(\mD_1 \times \mD_1^\perp) \cup (\mD_1^\perp \times \mD_1) } \\
&& \nonumber
 +\,\frac{1}{2}\,S_{\,\mD_1 , \mD_1^\perp}\cdot g_1
 +\frac{1}{2}\,\Upsilon_{ {\tilde T}_1 , {\tilde T}_1 }
 - 2\,( \Div {\tilde \theta}_{12} )_{\,|\,(\mD_{12} \times \mD_{12}^\perp )\cup (\mD_{12}^\perp \times \mD_{12}) }
 + \frac{1}{2} \Upsilon_{ T_{12} , T_{12}}  \\
&& +\frac{1}{2}\,S_{\,\mD_{12} , \mD_{12}^\perp}\cdot g_{12}
 +2\,{\cal T}_{12}^\flat
 - \frac{1}{2} \Upsilon_{T_2 , T_2}
 - 2\,{\cal T}_{2}^\flat
 - \frac{1}{2}\,S_{\,\mD_{2} , \mD_{2}^\perp}\cdot g_{2} , B \big\> \, {\rm d} \vol_g.
\end{eqnarray}
Let $\mD_3 = \mD_1^\perp(0) \cap\,\mD$.
It follows from \eqref{SD1inDint} 
that the Euler-Lagrange equations
for $B(X,Y) = B(P_1 X, P_1 Y)$,
$B(X,Y) = B(P_2 X, P_2 Y)$
and $B(X,Y) = B(P_3 X, P_3 Y)$, are all of the same form:
\begin{eqnarray*}
 S_{\,\mD_1 , \mD_1^\perp \cap\,\mD} \cdot g_{\,|\, \mD_i \times \mD_i} \eq \lambda\, g_{\, |\, \mD_i \times \mD_i}  ,\quad i=1,2,3.
\end{eqnarray*}
For $B(X,Y) = B(P_1 X, P_3 Y) + B(P_3 X, P_1 Y)$ we obtain the Euler-Lagrange equation
\begin{equation*}
( \Div {\tilde \theta}_1 + \Div {\tilde \theta}_{12} )_{\,|\,(\mD_1 \times \mD_3 ) \cup ( \mD_3 \times \mD_1) }=0.
\end{equation*}
We have ${\tilde \theta}_{12}=0$ and from \cite[the proof of Proposition 8]{rz-2}, we get $\Div {\tilde \theta}_{1} =0$.
Hence, all terms in this Euler-Lagrange equation 
vanish, and so it is satisfied. Note that due to assumptions $B(P_2 X, P_3 Y)=0=B(P_1 X, P_2 Y)$ for all $X,Y \in \mathfrak{X}_M$, there are no more Euler-Lagrange equations to consider.
As in the proof of Proposition~\ref{propfKcontact1}, 
we get $S_{\,\mD_1,\mD_1^\perp \cap\,\mD} =0$; thus, all Euler-Lagrange equations hold with~$\lambda=0$.
\end{proof}

The following result shows that variational problem 
\eqref{Eq-Smix-2}
can lead to different Euler-Lagrange equations than action \eqref{Eq-Smix}.


\begin{proposition}\label{propH1SD1inD}
Let $(M,g)$ be a Riemannian manifold with a unit Killing vector field $\xi$.
Let $\mD$ be the distribution orthogonal to $\xi$, and $\mD_1$ and $\mD_3$ be orthogonal complementary distributions in~$\mD$. Let $\mD_2$ be spanned by $\xi$.
Suppose that $\mT_{12} = \mT_{23} =0$, and $\mD_1,\mD_3$ are integrable and totally geodesic.
Then $g$ is critical for the action \eqref{Eq-Smix-2} with respect to volume-preserving variations that preserve the orthogonal complement
of $\mD$ if and only if the following Euler-Lagrange equations hold:
\begin{eqnarray} \label{EL11forD1inD}
 6\,g((\mTs_{13,\xi})^2 X, Y) \eq (S_{\,\mD_1, \mD_1^\perp \cap\,\mD} -\lambda)\,g(X,Y),
 \quad (X, Y \in \mD_i , \quad i=1,3 ) , \\ 
 \label{EL22forD1inD}
 3\,\| \mT_{13} \|^2 \eq S_{\,\mD_1 , \mD_1^\perp \cap\,\mD} - \lambda, \\
 \label{EL13forD1inD}
g ( (\nabla_\xi \mTs_{13,\xi}) X,Y) \eq 0 , \quad ( X \in \mD_1 ,\ Y \in \mD_3).
\end{eqnarray}
\end{proposition}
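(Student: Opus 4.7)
The plan is to apply the variation formula
$2\,\frac{d}{dt}\,J_{\mD_1\subset\mD}(g_t)|_{\,t=0} = I_1 + I_2 - I_3 - I_4 - I_5 - I_6$
stated just above. Since $\mD^\perp=\mD_2=\mathrm{span}(\xi)$ is preserved and orthogonal to $\mD=\mD_1\oplus\mD_3$, the infinitesimal variation $B$ has non-zero blocks only on $\mD_i\times\mD_i$ for $i=1,2,3$ and on $(\mD_1\times\mD_3)\cup(\mD_3\times\mD_1)$. The volume-preserving hypothesis introduces one Lagrange multiplier $\lambda\in\RR$, and $g$ is critical if and only if $\delta J_{\mD_1\subset\mD}-\lambda\,g$ vanishes on each of these four blocks independently.

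First I would collect preliminary vanishings of extrinsic tensors. Because $\mD_1,\mD_3$ are integrable and totally geodesic and $\xi$ is a unit Killing (hence geodesic) vector field, the tensors $h_i$, $T_i$, $H_i$, $\tilde H_i$ vanish for $i=1,2,3$. Writing out $(\mathcal{L}_\xi g)(X,Z)=0$ for $X\in\mD_1$, $Z\in\mD_3$, together with metric compatibility, yields $g(\xi,\nabla_X Z+\nabla_Z X)=0$, so $\tilde h_2|_{\mD_1\times\mD_3}=0$; that is, the pair $(\mD_1,\mD_3)$ is mixed totally geodesic, $\mh_{13}=0$. The assumptions $\mT_{12}=\mT_{23}=0$ kill $\tilde T_3|_{\mD_1\times\mD_2}$ and $\tilde T_1|_{\mD_2\times\mD_3}$, and a short Killing-equation computation identifies the remaining non-zero mixed tensors as
\begin{equation*}
\tilde h_3(X,\xi)=-\mTs_{13,\xi}\,X\ \ (X\in\mD_1),\qquad \tilde h_1(Z,\xi)=-\mTs_{13,\xi}\,Z\ \ (Z\in\mD_3),
\end{equation*}
together with $\tilde T_2(X,Z)=\mT_{13}(X,Z)$ for $X\in\mD_1$, $Z\in\mD_3$. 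Thus every surviving extrinsic quantity is controlled by the single skew operator $\mTs_{13,\xi}$. Substituting into \eqref{E-new2} with $\mD_2$ in the auxiliary role gives $S_{\mD_1,\mD_1^\perp\cap\mD}=-\tfrac{3}{2}\|\mT_{13}\|^2$, an identity that will reappear in the final form of the equations.

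With these vanishings in hand, I would proceed block by block through $I_1+I_2-I_3-I_4-I_5-I_6$. In every $I_k$ the summands built from $h_i$, $T_i$, $H_i$, and divergences of mean curvature vectors drop out; only the $\tilde h_3$, $\tilde h_1$, and $\tilde T_2$ contributions on mixed planes survive. Using $g((\mTs_{13,\xi})^2X,Y)=-g(\mTs_{13,\xi}X,\mTs_{13,\xi}Y)$, the $\Upsilon$- and $\widetilde{\cal K}$-type terms in the $\mD_1\times\mD_1$-block (respectively the $\mD_3\times\mD_3$-block) collapse, after the sign flips in $-I_3-I_4-I_5-I_6$, to a multiple of $g((\mTs_{13,\xi})^2X,Y)$ whose coefficient I expect to be exactly $6$; this produces \eqref{EL11forD1inD}. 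The one-dimensional $\mD_2\times\mD_2$-block is scalar and reduces to the trace-type identity \eqref{EL22forD1inD}. In the off-diagonal $\mD_1\times\mD_3$-block all symmetric $\Upsilon$-terms cancel by skew-symmetry of $\mTs_{13,\xi}$, and only the divergence pieces $\Div\tilde\theta_1$ and $\Div\tilde\theta_{12}$ survive; using $\nabla_\xi\xi=0$ and the frame adapted to $\mD_1\oplus\mD_3$, they combine into the single quantity $g((\nabla_\xi\mTs_{13,\xi})X,Y)$, giving \eqref{EL13forD1inD}.

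The hard part will be the sign- and coefficient-bookkeeping in the block reductions, in particular confirming the numerical constant $6$ in \eqref{EL11forD1inD} and verifying that the surviving $\Upsilon$- and $\widetilde{\cal K}$-terms combine symmetrically under the interchange $\mD_1\leftrightarrow\mD_3$. I plan to model this algebra on the computations in the proofs of Proposition~\ref{propRS} and \cite[Proposition~8]{rz-2}, where the same operator $\mTs$ governs the geometry via a Riemannian submersion or $K$-contact structure and produces the same quadratic form on the corresponding block.
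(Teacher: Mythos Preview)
Your overall approach matches the paper's: reduce $I_1+I_2-I_3-I_4-I_5-I_6$ block by block using the vanishing of $h_i,T_i,H_i,\tilde H_i$ and the identification of the surviving mixed tensors with $\mTs_{13,\xi}$. Your preliminaries (including $\mh_{13}=0$ and $S_{\mD_1,\mD_1^\perp\cap\mD}=-\tfrac{3}{2}\|\mT_{13}\|^2$) are correct, and the diagonal blocks are sketched along the right lines, though in the paper the coefficient $6$ on $\mD_1\times\mD_1$ comes from $\tfrac{1}{2}\Upsilon_{\tilde h_1,\tilde h_1}$, $-\Div h_{12}$ and $-2\widetilde{\cal T}_2^\flat$ (no $\widetilde{\cal K}$-type term actually survives).

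There is, however, a genuine gap in your treatment of the off-diagonal $\mD_1\times\mD_3$ block. You assert that the surviving divergence pieces are $\Div\tilde\theta_1$ and $\Div\tilde\theta_{12}$, but both of these vanish identically: $\tilde\theta_1$ is built from $\tilde T_1$, and you already argued $\tilde T_1=0$ (from $T_3=0$ and $\mT_{23}=0$); likewise $\tilde\theta_{12}$ is built from $\tilde T_{12}=T_3=0$. Zero tensors cannot combine to give the nonzero expression $g((\nabla_\xi\mTs_{13,\xi})X,Y)$. The term that actually survives is $\Div\alpha_{12}$, coming from $-I_5$: the shape operator $A_{12,Y}$ is nonzero precisely because $h_{12}(X,\xi)=\mh_{12}(X,\xi)=-\mTs_{13,\xi}X\neq 0$, the very identity you recorded in the preliminaries. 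The paper computes, for $X\in\mD_1$, $Y\in\mD_3$,
\[
(\Div\alpha_{12})(X,Y)=\xi\big(g(h_{12}(X,\xi),Y)\big)=-\,g((\nabla_\xi\mTs_{13,\xi})X,Y),
\]
using $h_1=T_1=0$ and an adapted frame; the remaining candidates $\Upsilon_{\tilde\theta_{12},\alpha_{12}}$ and $\widetilde{\cal T}_2^\flat$ vanish on this block because $(\mTs_{13,\xi})^2$ preserves $\mD_1$ and $\mD_3$ separately. So your conclusion \eqref{EL13forD1inD} is right, but the mechanism you name does not produce it; you need to track $\alpha_{12}$, not $\tilde\theta$-terms.
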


\begin{proof}
Since $\xi$ is a unit Killing vector field, we have $h_2 = {\tilde h_2} = T_2 =0$, and from assumptions about $\mD_1,\mD_3$ we have $T_1 = T_3 = h_1 = h_3 =0$. 
From $T_3 =0 $ and $\mT_{23}= 0$ it follows that ${\tilde T}_1=0$, analogously we obtain ${\tilde T}_3 =0$.
Similarly as in \eqref{eqmhmT}, we obtain
\[
\mh_{12}(X,\xi) = - \mTs_{13,\xi} X , \quad \mh_{23}(Y,\xi) = - \mTs_{13,\xi} Y  \quad (X \in \mD_1 , Y \in \mD_3).
\]
By \eqref{SD1inD}, from \eqref{E-varJh-init2-1}--\eqref{E-varJh-init2-3} we obtain the following variational formulas.

For variations satisfying $B(X,Y) = B(P_1 X, P_1 Y)$ we have
\begin{equation*}
 2\,\frac{d}{dt}\,J_{\mD_1 \subset \mD}(g_t)|_{\,t=0}
= \int_M \big\< \frac{1}{2} \Upsilon_{ \th_1 , \th_1 } + S_{\,\mD_1 , \mD_1^\perp \cap\,\mD}\cdot g^\top_1 - ( \Div h_{12} )_{\,| \mD_{12} \times \mD_{12}} - 2\,\widetilde{\cal T}^\flat_2  , B \big\>\, {\rm d} \vol_g .
\end{equation*}
For $X,Y \in \mD_1$ we have
\begin{align*}
 \Div h_{12}(X,Y) =-\sum\nolimits_\mu g(h_{12}(\nabla_{e_\mu} X, Y), e_\mu) -\sum\nolimits_\mu g( h_{12}(\nabla_{e_\mu} Y, X), e_\mu) \\
\hskip-0mm= -\sum\nolimits_\mu g( h_{12} (\xi, Y), e_\mu )\,g( T_{13}(e_\mu, X), \xi)
-\sum\nolimits_\mu g(h_{12}(\xi, X), e_\mu )\,g( T_{13}(e_\mu, Y), \xi) \\
\hskip-0mm= -\,2\,g(\mTs_{13,\xi} X ,\mTs_{13,\xi} Y), \\
 \frac{1}{2} \Upsilon_{ \th_1 , \th_1 } (X,Y) = \sum\nolimits_\mu g( X , \th_1 (\xi , e_\mu) )\,g( Y, \th_1 (\xi , e_\mu))
+ \sum\nolimits_\mu g( X , \th_1 (e_\mu , \xi) )\,g( Y, \th_1 ( e_\mu , \xi)) \\
= 2\sum\nolimits_\mu g( X, \mTs_{13,\xi} e_\mu )\,g( Y, \mTs_{13,\xi} e_\mu) = 2\,g(\mTs_{13,\xi} X ,\mTs_{13,\xi} Y),\\
  2\widetilde{\cal T}^\flat_2 (X,Y) = 2\,g(\tT_{2,\xi} \tT_{2,\xi} X,Y)
= -2\sum\nolimits_\mu g(\mTs_{2,\xi} X , e_\mu)\,g(\mTs_{2,\xi} Y , e_\mu) = -2\,g(\mTs_{13,\xi} X ,\mTs_{13,\xi} Y).
\end{align*}
Hence, the first Euler-Lagrange equation is \eqref{EL11forD1inD} for $X,Y \in \mD_1$. 

For variations satisfying $B(X,Y) = B(P_3 X, P_3 Y)$, we have
\begin{eqnarray*}
 2\,\frac{d}{dt}\,J_{\mD_1 \subset \mD}(g_t)|_{\,t=0}
= \int_M \big\< - ( \Div {\tilde h}_{1} )_{\,| \mD_{1}^\perp \times \mD_{1}^\perp } - 2\,\widetilde{\cal T}^\flat_2 + \frac{1}{2} \Upsilon_{ h_{12} , h_{12} } + S_{\,\mD_1 , \mD_1^\perp \cap\,\mD}\cdot g^\top_3 , B \big\>\, {\rm d} \vol_g .
\end{eqnarray*}
For $X,Y \in \mD_3$, we have
\begin{eqnarray*}
&&\Div {\tilde h}_{1} (X,Y) = -\sum\nolimits_a g( {\tilde h}_1 (\nabla_{E_a} X,Y) , E_a  ) -\sum\nolimits_a g( {\tilde h}_1 (\nabla_{E_a} Y,X) , E_a  ) \\
&=& -\sum\nolimits_a g( h_{23} ( \xi , Y ) , E_a  ) g(T_{13}(E_a,X) , \xi )
-\sum\nolimits_a g( h_{23} ( \xi , X ) , E_a  ) g(T_{13}(E_a,Y) ,\xi ) \\
&=& -2\,\sum\nolimits_a g( \mTs_{13,\xi} X , E_a  )g( \mTs_{13,\xi} Y , E_a  )
=-2\,g(\mTs_{13,\xi} X ,\mTs_{13,\xi} Y) , \\
&&\frac{1}{2} \Upsilon_{ h_{12} , h_{12} } (X,Y) = 2 \sum\nolimits_a g( X , h_{12} (\xi , E_a) )\,g( Y,  h_{12} (\xi , E_a) ) \\
&=& 2\sum\nolimits_a g( X, \mTs_{13,\xi} E_a )\,g( Y, \mTs_{13,\xi} E_a)
= 2\,g(\mTs_{13,\xi} X ,\mTs_{13,\xi} Y) , \\
&& 2\widetilde{\cal T}^\flat_2 (X,Y) = 2\,g(\tT_{2,\xi} \tT_{2,\xi} X,Y)
= -2\sum\nolimits_a g(\mTs_{2,\xi} X , E_a)\,g(\mTs_{2,\xi} Y , E_a) \\
&=& -2\,g(\mTs_{13,\xi} X ,\mTs_{13,\xi} Y).
\end{eqnarray*}
Hence, we obtain as the Euler-Lagrange equation \eqref{EL11forD1inD}, but for $X,Y \in \mD_3$.

For variations with $B(X,Y) = B(P_2 X, P_2 Y)$ we have
\begin{equation*}
 2\,\frac{d}{dt}\,J_{\mD_1 \subset \mD}(g_t)|_{\,t=0}
= \int_M \< S_{\,\mD_1 , \mD_1^\perp \cap\,\mD}\cdot g^\top_2 - ( \Div \th_1 )_{\,| \mD_{1}^\perp \times \mD_{1}^\perp }
   {-} ( \Div h_{12} )_{\,| \mD_{12} \times \mD_{12}}
   {-} \frac{1}{2} \Upsilon_{ {\tilde T}_2 , {\tilde T}_2 } , B \>\, {\rm d} \vol_g .
\end{equation*}
We have
\begin{eqnarray*}
 (\Div \th_1 )(\xi,\xi) \eq -2 \sum\nolimits_a g( \th(\nabla_{E_a} \xi,\xi) ,E_a) = 2\sum\nolimits_a g(h_{13} (\tT_{2,\xi} E_a, \xi), E_a) \\
 \eq -2\sum\nolimits_a g((\mTs_{13,\xi})^2 E_a, E_a) = \| \mT_{13} \|^2 ,\\
 (\Div h_{12} )(\xi,\xi) \eq -2 \sum\nolimits_\mu g( h_{12} ( \nabla_{e_\mu} \xi, \xi) , e_\mu) \\
 \eq -2 \sum\nolimits_\mu g(\mTs_{13,\xi} E_a , e_\mu)\,g( T_{13} (e_\mu, E_a) , \xi) = \| \mT_{13} \|^2, \\
 \Upsilon_{ {\tilde T}_2 , {\tilde T}_2 } (\xi,\xi) \eq 2 \sum\nolimits_{a,\mu} g(\xi , T_{13} (E_a , e_\mu) )\,g(\xi , T_{13} (E_a , e_\mu) ) = \| \mTs_{13} \|^2.
\end{eqnarray*}
Hence, the Euler-Lagrange equation for variations with $B(X,Y) = B(P_2 X, P_2 Y)$ is \eqref{EL22forD1inD}.

For variations with $B(X,Y) = B(P_1 X, P_3 Y) + B(P_3 X, P_1 Y)$ we have
\begin{eqnarray*}
&& 2\,\frac{d}{dt}\,J_{\mD_1 \subset \mD}(g_t)|_{\,t=0}
 = \int_M \< 2\,( \Div \alpha_1 )_{\,| (\mD_1 \times \mD_1^\perp ) \cup ( \mD_1^\perp \times \mD_1 ) } \\
&&  +\,2\,( \Div ( \alpha_{12} - {\tilde \theta}_{12} ) )_{\,| ( \mD_1 \times \mD_1^\perp ) \cup ( \mD_1^\perp \times \mD_1 ) } - 2\Upsilon_{ {\tilde \theta}_{12} , \alpha_{12} } - 2\,\widetilde{\cal T}^\flat_2  , B \>\,{\rm d} \vol_g .
\end{eqnarray*}
But $\alpha_1 =0 = {\tilde \theta}_{12}$, so their divergences also vanish.
For $X \in \mD_1$ and $Y \in \mD_3$ we have by $h_1=0$
\[
\frac{1}{2} \Upsilon_{ {\tilde \theta}_{12} , \alpha_{12} }(X,Y) = \sum\nolimits_{a,\mu} g(Y , \tT_{12,a} e_\mu)\,g(X, A_{12,\mu} E_a) =0,
\]
and by $T_3=0$ we get
\[
\widetilde{\cal T}^\flat_2 (X,Y) = - g( \tT_{2,\xi} X, \tT_{2,\xi} Y ) = -\sum\nolimits_\mu g(\tT_{2,\xi} X, e_\mu)\,g(e_\mu, \tT_{2,\xi} Y).
\]
For $X \in \mD_1$ and $Y \in \mD_3$ such that $\nabla_Z X \in \mD_1^\perp$ and $\nabla_Z Y \in \mD_3^\perp$ for all $Z \in T_xM$ at the point $x$ where the following formula is computed, we have, using $h_1=0=T_1$:
\begin{eqnarray*}
&& ( \Div \alpha_{12} )(X,Y) = \xi ( g( A_{12,Y} X ,\xi) ) - \sum\nolimits_\mu g(A_{12,\mu} X , \xi)\,g(\nabla_{\xi} Y , e_\mu) \\
&& - \sum\nolimits_a g(A_{12, Y} \xi , E_a)\,g( \nabla_{E_a} X, \xi) - \sum\nolimits_a g( A_{12,Y} E_a , \xi)\,g(\nabla_\xi X, E_a) \\
&& = \xi ( g( A_{12,Y} X ,\xi) ) = \xi (g(h_{12} (X,\xi), Y)) = -\xi ( g( \mTs_{13, \xi} X, Y)) = -g ( (\nabla_\xi \mTs_{13,\xi}) X,Y).
\end{eqnarray*}
Hence, the Euler-Lagrange equation for variations with $B(X,Y) = B(P_1 X, P_3 Y) + B(P_3 X, P_1 Y)$ is \eqref{EL13forD1inD}.
\end{proof}

\begin{remark}\rm
Taking trace of \eqref{EL11forD1inD}, we get $6\,\| \mT_{13} \|^2 = -(S_{\,\mD_1 , \mD_1^\perp \cap\,\mD} - \lambda)(n_1+n_3)$.
Comparing this with \eqref{EL22forD1inD} and using $n_1+n_3>0$, gives $\mT_{13}=0$ and $S_{\,\mD_1 , \mD_1^\perp \cap\,\mD} = \lambda$.
\end{remark}

From Proposition~\ref{propH1SD1inD} we obtain the following analogue of Proposition \ref{propH1}, for the modified variational problem.

\begin{corollary} \label{corKcontactSD1inD}
Let $M^{2n+1}(f, \xi, \eta, g)$ be a $K$-contact manifold with 
a global $f$-basis
$e_1 , \ldots , e_{2n}$, 
such that 
$f e_i = e_{n+i}\ (1 \leq i \le n)$.
Let $\mD_1$ be spanned by $e_1 , \ldots , e_{n}$, 
$\mD_2$ be spanned by $\xi$ and let $\mD$ be the distribution orthogonal to $\xi$.
Suppose that distributions: $\mD_1$ and $\mD_3 = \mD_1^\perp \cap \mD$ are both integrable and totally geodesic and $\mT_{12} = \mT_{23}=0$. 
Then $g$ is a critical point of the action \eqref{Eq-Smix-2} 
with respect to volume-preserving variations that preserve the orthogonal complement of $\mD$ and the length of $\xi$ if and only if $(\nabla_\xi \mTs_{13,\xi})=0$.
\end{corollary}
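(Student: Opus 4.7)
The plan is to apply Proposition~\ref{propH1SD1inD}. All of its hypotheses are met in the present $K$-contact setting: the Reeb field $\xi$ is a unit Killing field, $\mD_2=\operatorname{span}(\xi)$, $\mD_1$ and $\mD_3$ are orthogonal complementary subbundles of $\mD=\xi^\perp$, and the integrability, total geodesicity, and mixed-integrability conditions ($\mT_{12}=\mT_{23}=0$) are assumed. By that proposition, $g$ is critical (with respect to volume-preserving variations preserving $\mD^\perp$) if and only if the Euler-Lagrange equations \eqref{EL11forD1inD}, \eqref{EL22forD1inD}, \eqref{EL13forD1inD} all hold for some constant $\lambda\in\mathbb{R}$.

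A crucial first step is to observe that the extra restriction of variations preserving $|\xi|$ forces $B(\xi,\xi)=0$, so the $\mD_2\times\mD_2$-component of the EL system---namely \eqref{EL22forD1inD}---is not imposed. This matters because the remark following Proposition~\ref{propH1SD1inD} shows that coupling \eqref{EL22forD1inD} with \eqref{EL11forD1inD} forces $\mT_{13}=0$, which contradicts the contact identity $d\eta=\Phi\neq 0$.

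Next, I would compute $\mTs_{13,\xi}$ explicitly using $d\eta=\Phi$: for $X\in\mD_1$ and $Y\in\mD_3$,
\[
g(\mTs_{13,\xi}X,Y)=\tfrac{1}{2}\,g([X,Y],\xi)=-\tfrac{1}{2}\,d\eta(X,Y)=\tfrac{1}{2}\,g(fX,Y),
\]
so $\mTs_{13,\xi}=\tfrac{1}{2}f$ on $\mD$, and from $f^2|_\mD=-\operatorname{id}$ we get $(\mTs_{13,\xi})^2=-\tfrac{1}{4}\operatorname{id}$ on each of $\mD_1,\mD_3$. Consequently \eqref{EL11forD1inD} reduces to the pointwise scalar relation $S_{\,\mD_1,\mD_1^\perp\cap\mD}=\lambda-\tfrac{3}{2}$. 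To see that a single constant $\lambda$ works, I would expand $S_{\,\mD_1,\mD_3}$ via \eqref{E-new2} applied to the ordered triple $(\mD_1,\mD_3,\mD_2)$: all $H_i,h_i,T_i$ vanish by the hypotheses, and the surviving $\tilde h$- and $\tilde T$-norms can be evaluated in the global $f$-basis using $\nabla_X\xi=-fX$ together with the algebraic consequences $P_1[\xi,Y]=0$ for $Y\in\mD_3$ (from $\mT_{23}=0$) and its analogue from $\mT_{12}=0$, producing a value depending only on $n$.

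Finally, \eqref{EL13forD1inD} unpacks to $g((\nabla_\xi\mTs_{13,\xi})X,Y)=0$ for all $X\in\mD_1,\,Y\in\mD_3$, which is exactly the condition $\nabla_\xi\mTs_{13,\xi}=0$ (paired with its dual form on $\mD_3\to\mD_1$), yielding the stated equivalence. The main technical step I expect to require care is the computation showing $S_{\,\mD_1,\mD_3}$ is constant via \eqref{E-new2}; once carried out, the rest is a direct matching of the reduced EL equations to the single condition $\nabla_\xi\mTs_{13,\xi}=0$.
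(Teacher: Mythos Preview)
Your proposal is correct and follows essentially the same route as the paper: invoke Proposition~\ref{propH1SD1inD}, observe that the restriction $\partial_t g_t(\xi,\xi)|_{t=0}=0$ removes equation \eqref{EL22forD1inD} from the system, verify that \eqref{EL11forD1inD} holds automatically, and conclude that criticality is equivalent to \eqref{EL13forD1inD}. The paper's proof is terser---it simply asserts that \eqref{EL11forD1inD} holds and that it is incompatible with \eqref{EL22forD1inD}---whereas you explicitly compute $\mTs_{13,\xi}=\tfrac{1}{2}f$ and plan to check via \eqref{E-new2} that $S_{\,\mD_1,\mD_3}$ is constant (it equals $-\tfrac{3n}{4}$), which is the justification the paper leaves implicit.
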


\begin{proof}
Equations \eqref{EL11forD1inD}
hold, but do not hold together with \eqref{EL22forD1inD} for the same constant $\lambda$. Hence, $g$ is critical with respect to variations such that $\dt g_t(\xi,\xi) |_{t=0} =0$ (e.g., keeping $\xi$ a unit vector field) if and only if \eqref{EL13forD1inD} holds.
\end{proof}

\begin{remark} \rm
The distributions considered in Proposition~\ref{propH1SD1inD} and Corollary~\ref{corKcontactSD1inD} exist
on the real Heisenberg group
$H_{2n+1}$ with a left-invariant $K$-contact metric, and also \eqref{EL13forD1inD} is satisfied there.
\end{remark}

\textbf{Funding}.
The second author was supported from Narodowe Centrum Nauki grant Miniatura 2021/05/X/ST1/00359.

\end{document}